
\documentclass[11pt]{article}

\usepackage[english]{babel}
\usepackage{amsmath,amsfonts,amssymb,amsthm}
\usepackage{latexsym}
\usepackage{makeidx}
\usepackage{amscd}

\usepackage{vmargin}
\setmarginsrb{10mm}{20mm}{10mm}{20mm}
             {8mm}{8mm}{8mm}{8mm}

\numberwithin{equation}{section}

\newtheorem{definition}{Definition}[section]
\newtheorem{lemma}[definition]{Lemma}
\newtheorem{theorem}[definition]{Theorem}
\newtheorem{corollary}[definition]{Corollary}
\newtheorem{em-discussion}[definition]{Discussion}

\newtheorem{proposition}[definition]{Proposition}
\newtheorem{fact}[definition]{Fact}
\newtheorem{em-example}[definition]{Example}
\newtheorem{em-def}[definition]{Definition}        
\newtheorem{em-remark}[definition]{Remark}         
\newtheorem{em-question}[definition]{Question}

\newtheorem{problem}[definition]{Problem}

\newenvironment{example}{\begin{em-example} \em }{ \end{em-example}}
\newenvironment{remark}{\begin{em-remark} \em }{\end{em-remark}}

\newenvironment{discussion}{\begin{em-discussion} \em }{ \end{em-discussion}}

\newcommand{\R}{\mathbb R}
\newcommand{\N}{\mathbb N}

\newcommand{\Q}{\mathbb Q}
\newcommand{\Z}{\mathbb Z}

\newcommand{\f}{\phi}
\def\abg{\mathbf{AbGrp}}
\def\af{\mathbf{Flow}}

\def\mod{\mathbf{Mod}}
\def\bim{\mod_{R[t]}}

\def\mod{\mathbf{Mod}}
\def\ent{\mathrm{ent}}
\def\End{\mathrm{End}}
\def\Hom{\mathrm{Hom}}

\def\Fin{\mathrm{Fin}}

\def\P{\mathbf{P}}
\def\QQ{\mathfrak Q}

\def\H{\mathfrak{H}}
\def\HB{\mathfrak H_b}
\def\r{\mathbf r}
\global\def\hull#1{\langle{#1}\rangle}

\input xy
\xyoption{all}

\title{Entropy in a category}

\author{Dikran Dikranjan
\\{\footnotesize {\tt  dikran.dikranjan@uniud.it}} 
\\{\footnotesize Dipartimento di Matematica e Informatica,}
\\{\footnotesize Universit\`{a} di Udine,}
\\{\footnotesize Via delle Scienze, 206 - 33100 Udine, Italy} 
 \and Anna Giordano Bruno
\\{\footnotesize {\tt  anna.giordanobruno@uniud.it}} 
\\{\footnotesize Dipartimento di Matematica e Informatica,}
\\{\footnotesize Universit\`{a} di Udine,}
\\{\footnotesize Via delle Scienze, 206 - 33100 Udine, Italy} 
 }

\date{Dedicated to the memory of Maria Silvia Lucido}

\begin{document}

\maketitle


\abstract{The Pinsker subgroup of an abelian group with respect to an endomorphism was introduced in the context of algebraic entropy. Motivated by the nice properties and characterizations of the Pinsker subgroup, we generalize its construction in two directions. We introduce the concept of entropy function $h$ of an abelian category and define the Pinsker radical with respect to $h$, so that the class of all objects with trivial Pinsker radical is the torsion class of a torsion theory.}

\section{Introduction}

The concept of entropy was invented by Clausius in Physics in 1865 and carried over to Information Theory by Shannon in 1948, to Ergodic Theory by Kolmogorov and Sinai in 1958, and to Topological Dynamics by Adler, Konheim and McAndrew in 1965 \cite{AKM} (see Section \ref{contr-sec} for the definitions of measure entropy and topological entropy).

\smallskip
In the context of abelian groups, the algebraic entropy $\ent$ of endomorphisms $\phi$ of abelian groups $G$ was introduced first by Adler, Konheim and McAndrew \cite{AKM}, and later on by Weiss \cite{W}, using trajectories of finite subgroups $F$ of $G$ with respect to $\phi$ (see Section \ref{entW-sec}). So the algebraic entropy $\ent$ is appropriate for endomorphisms of torsion abelian groups. More precisely, the value of $\ent$ for an endomorphism of an abelian group coincides with the value of $\ent$ for the restriction of the endomorphism to the torsion part of the group, and so it is trivially zero for endomorphisms of torsion-free abelian groups.

Peters \cite{Pet} modified the definition of algebraic entropy for automorphisms of arbitrary abelian groups, using non-empty finite subsets instead of finite subgroups. In \cite{DG} this notion was extended to endomorphisms of abelian groups (see Section  \ref{ent-sec}). We denote the algebraic entropy defined in this way by $h_a$.

Another kind of algebraic entropy generalizing $\ent$ was introduced in \cite{SZ}, namely, the $i$-entropy $\ent_i$ defined for endomorphisms of modules with respect to an additive invariant $i$ (see Section \ref{ient-sec}).
 
\medskip
In the framework of Ergodic Theory, the \emph{Pinsker $\sigma$-algebra} $\mathfrak P(\phi)$ of a measure preserving transformation $\phi$ of a measure space $(X, {\mathcal B}, \mu)$ is defined as the maximum $\sigma$-subalgebra of ${\mathcal B}$ such that $\phi$ restricted to $(X, \mathfrak P(\phi),\mu\restriction_{{\mathcal B}})$ has measure entropy zero. 

A similar concept was introduced in Topological Dynamics as follows. A \emph{topological flow} is a pair $(X,\phi)$, where $X$ is a compact Hausdorff space and $\phi:X\to X$ a homeomorphism. Moreover, a \emph{factor} $(\pi,(Y,\psi))$ of $(X,\phi)$ is a topological flow $(Y,\psi)$ together with a continuous surjective map $\pi:X\to Y$ such that $\pi\circ \phi=\psi\circ\pi$. A topological flow $(X,\phi)$ admits a greatest factor of zero topological entropy, called \emph{topological Pinsker factor} \cite{BL} (see also \cite{KerLi}).

\smallskip
The counterpart of these notions for the algebraic entropy was introduced and studied in its various aspects in \cite{DG1}. For an abelian group $G$ and an endomorphism $\phi$ of $G$, the \emph{Pinsker subgroup} is the greatest $\phi$-invariant subgroup of $G$ where the restriction of $\phi$ has zero algebraic entropy $h_a$.

\bigskip
The aim of this paper is two-fold. On one hand, we address the category-minded reader with a blend of results showing the necessity to develop a rigorous categorical approach to entropy. On the other hand, we generalize the construction of the Pinsker subgroup from \cite{DG1} in several directions.

First, we replace the category of abelian groups by abelian categories. 
Second, we introduce a general (abstract) notion of entropy function for abelian categories in Definition \ref{h-def}. We impose only three very mild axioms (A1), (A2) and (A3), and we show that they form a minimal set of properties sufficient to carry out the construction of the  Pinsker radical (see Remark \ref{minimal}). The essence of our approach is to make clear that many of the results on the already defined entropies can be proved for abstractly defined entropy functions, without any recourse to the specific formulas defining the known entropies. This should be compared with the totally opposite approach in  \cite{DG1}, where no use was made of (A2), but the specific features of the algebraic entropy $h_a$ were heavily used to establish the polynomial vs exponential growth dichotomy. 
Third, in the context of module categories we introduce several radicals capturing the dynamical behavior of module endomorphisms and ``approximating" the Pinsker submodule. These radicals are used to develop a larger set of axioms with the aim to determine uniquely the entropy function. 

\medskip
A categorical approach to entropy from a completely different point of view is given in \cite{DS} (se also \cite{DBcn}).

\subsection{Main results}

In the sequel $\mathfrak M$ will be a well-powered cocomplete abelian category. 

\begin{definition}\label{h-def}
An \emph{entropy function} $h$ of $\mathfrak M$ is a function $h:\mathfrak M\to \R_+\cup\{\infty\}$ such that:
\begin{itemize}
\item[\emph{(A1)}] $h(0) = 0$ and $h(M)=h(N)$ if $M$ and $N$ are isomorphic objects in $\mathfrak M$;
\item[\emph{(A2)}] $h(M)=0$ if and only if $h(N)=0=h(Q)$ for every exact sequence $0\longrightarrow N\longrightarrow M \longrightarrow Q\longrightarrow0$ in $\mathfrak M$;
\item[\emph{(A3)}] for a set $\{M_j:j\in I\}$ of objects of $\mathfrak M$, $h(\bigoplus_{j\in J} M_j) = 0$ if and only if $h(M_j) = 0$ for all $j\in J$.
\end{itemize}
An entropy function $h$ of $\mathfrak M$ is \emph{binary} if it takes only the values $0$ and $\infty$.
\end{definition}

\smallskip 
A function $h$ with (A1) is called (for obvious reasons) an \emph{invariant} of $\mathfrak M$. 
In the case $\mathfrak M=\mod_A$ is the category of left $A$-modules over a ring $A$, the axiom (A3), in the presence of (A1) and (A2), is equivalent to: if $M\in\mod_A$ and $M$ is direct limit of its submodules $\{M_j:j\in J\}$, then $h(M) = 0$ if and only if $h(M_j) = 0$ for every $j\in J$.
Moreover, the length functions in the sense of \cite{SVV,V} (see Definition \ref{L-def}) are special entropy functions of $\mod_A$, so our approach generalizes also this known notion.



\bigskip
Section \ref{ef-sec} is dedicated to entropy functions of $\mathfrak M$. In Section \ref{lattice-sec} we 
define a preorder $\prec$ of the class $\H(\mathfrak M)$ of all entropy functions of $\mathfrak M$ induced by the order of $\R_+\cup\{\infty\}$. 
It makes $(\H(\mathfrak M),\prec)$ a complete lattice as well as its sublattice $(\HB(\mathfrak M),\prec)$ of all binary entropy functions of $\mathfrak M$ (see Proposition \ref{H-cl} and Corollary \ref{Hb-cl}, respectively).

In Section \ref{p-sec} we define the \emph{Pinsker radical} $\P_h:\mathfrak M\to \mathfrak M$ with respect to an entropy function $h$ of $\mathfrak M$, and we prove that it is a hereditary radical in Theorem \ref{TT}. 
This is a counterpart of the Pinsker subgroup, defined for the algebraic entropy $h_a$ in \cite{DG1}, which was the motivating point of the present paper.

It seems then natural to investigate the subclass of $\mathfrak M$ consisting of all objects in $\mathfrak M$ where a given entropy function $h$ of $\mathfrak M$ takes value zero. So let $$\mathcal T_h=\{M\in\mathfrak M: h(M)=0\}.$$ 

In analogy to \cite{DG1}, for a given entropy function $h$ of $\mathfrak M$, we say that $M$ has \emph{completely positive entropy} if $h(N)>0$ for every non-zero subobject $N$ of $M$, and we denote this by $h(M)>\!\!>0$.
As a natural counterpart of $\mathcal T_h$, we define the class of all objects in $\mathfrak M$ with completely positive entropy, that is, $$\mathcal F_h=\{M\in\mathfrak M: h(M)>\!\!>0\}.$$
Section \ref{tt-sec} is dedicated to the torsion theory $\mathfrak t_h=(\mathcal T_h,\mathcal F_h)$ relative to the Pinsker radical $\P_h$ of $\mathfrak M$. 
Since $\P_h$ is a hereditary radical, $\mathfrak t_h$ is a hereditary torsion theory in $\mathfrak M$. 

Moreover, Theorem \ref{hb<->tt} shows that the assignment $h\mapsto \mathfrak t_h$ is a bijective order preserving correspondence between binary entropy functions of $\mathfrak M$ and hereditary torsion theories in $\mathfrak M$. So, there may be information in an entropy function which is not captured by the hereditary torsion theory, and binary entropy functions are simply those which do not contain any additional information.

\medskip
In Section \ref{ef-af-sec} we restrict to the fundamental case of entropy functions of module categories.

\smallskip
We start recalling in Section \ref{flow-sec} the definition of the category $\af_\mathfrak X$ of flows of a category $\mathfrak X$. 
Prominent examples will be the category $\abg$ of all abelian groups, and the category $\mod_R$ of all left modules over a ring $R$, (or, more generally, an abelian category $\mathfrak X=\mathfrak M$).
In the latter case,  we denote $\af_{\mod_R}$ simply by $\af_R$, and we call \emph{algebraic flow} an object $(M,\phi)$ of $\af_R$.
Theorem \ref{iso} shows that 
\begin{equation}\label{af=mod}
\af_R \cong \mod_{R[t]}.
\end{equation}
So every $R[t]$-module can be considered as an algebraic flow, and viceversa. 
In particular, an entropy function $h$ can be viewed equivalently as an entropy function of $\af_R$ or as an entropy function of $\mod_{R[t]}$.

\smallskip
The general Definition \ref{h-def}  of entropy function allows us to consider entropy functions in \emph{arbitrary} module categories, going ``beyond the limits imposed by endomorphisms'' in the leading example $\af_R$. Nevertheless, we shall very often turn back to this case, which is the true source of the definition of entropy function. Indeed, in $\af_R$ \emph{compositions} (and in particular, \emph{powers}) of endomorphisms are available, and they have no counterpart in the general case.

\smallskip
In Section \ref{ax-sec} we discuss a collection of axioms to add to those of Definition \ref{h-def} in order to have entropy functions $h$ of $\af_R$ with a behavior closer to the original dynamical nature of this notion. 
Indeed, most of these additional axioms are very simple and natural, as for example
\begin{itemize}
\item[(A0)] $h(0_M)=0$ and $h(1_M)=0$ for every $M\in\mod_R$.
\end{itemize}
Other axioms (that we call (A2$^*$), (A4), (A4$^*$) and (A5)) imitate the properties that give uniqueness in the particular case of the algebraic entropy $h_a$ (see Theorem \ref{UT} below). However, we leave open the problem of finding a family of axioms giving uniqueness for an abstractly defined entropy function $h$ of $\af_R$.

\medskip
In Section \ref{qp-sec} we introduce radicals of the category $\af_R$ capturing the dynamics of the endomorphisms. These radicals do not depend on any specific entropy function, but can be compared with the Pinsker radical of an entropy function of $\af_R$.
The radical $\QQ$, inspired by a characterization of the Pinsker subgroup given in \cite{DG1}, is defined using the quasi-periodic points. The radicals $\mathfrak O$ and $\mathfrak I$, generated by all  zero endomorphisms and all identities respectively, satisfy $\mathfrak O\leq\QQ$ and $\mathfrak I\leq\QQ$, and provide a flexible language to describe the axiom (A0).

Motivated by the fact that in $\af_\abg$ the radical $\QQ$ coincides with the Pinsker radical $\P_{h_a}$, 
we prove that $\QQ\leq \P_h$ holds for any entropy function $h$ of $\af_R$ satisfying (A0) and (A4$^*$) (see Theorem \ref{semiA6}). 

We introduce other two radicals, namely $\mathfrak A \leq \mathfrak W$, that correspond respectively to the notion of pointwise integral and that of pointwise algebraic endomorphism. They both contain $\QQ$ and provide a better approximation from below and from above of the Pinsker radical (see Corollary \ref{PvsW}, and see also  
Theorem \ref{A<P} for a particular case of entropy function). On the other hand, Example \ref{example} shows that the Pinsker radical may fail to coincide with  $\mathfrak A$ (hence with  $\mathfrak Q$ either). 

\medskip
Section \ref{ex-sec} is dedicated to the specific known algebraic entropies, which are examples of entropy functions in the sense of Definition \ref{h-def}.
In Section \ref{ent-sec} we go back to our motivating example, that is the algebraic entropy $h_a$ of $\af_\abg$. In this particular case \eqref{af=mod} gives $\af_\abg\cong\mod_{\Z[t]}$. So $h_a$ can be viewed also as an entropy function of $\mod_{\Z[t]}$.
Section \ref{entW-sec} is dedicated to the entropy function that is best understood so far, namely the algebraic entropy $\ent$ of $\af_{\mathbf{TorAbGrp}}$, 
where $\mathbf{TorAbGrp}$ is the category of all torsion abelian groups. 
In Section \ref{ient-sec} we consider the $i$-entropy, introduced in \cite{SZ} for module categories $\mod_R$ over a ring $R$ and additive invariants $i$ of $\mod_R$, and developed in \cite{SVV,Simone}. 
Applying the results of Section \ref{ef-sec} to these particular cases, we find a torsion theory with respect to each of the considered entropy functions.
Moreover, we show that the Pinsker radical satisfies $\mathfrak A \leq \P_{\ent_i} \leq \mathfrak W$ when the ring $R$ is an integral domain (see Theorem \ref{A<P})
and actually coincides with the radical $\mathfrak W$ under appropriate conditions (see Corollary \ref{LASTcorollary}). 

\medskip
Finally, in Section \ref{contr-sec} we consider the measure entropy and the topological entropy. They satisfy the same properties with respect to (A1) and (A2) of Definition \ref{h-def}, but they are continuous under taking inverse limits. So we give an idea on how it could be possible to proceed in this different situation and leave open the problem to treat these that we call ``contravariant entropy functions''.

\bigskip
Parts of these results, in preliminary form, were exposed at seminar talks by the first named author at the Seminar of Dynamical Systems at the Hebrew University of Jerusalem,  the Mathematical Colloquium of Bar Ilan University of Tel Aviv and the Seminar of Category Theory at Coimbra University in the autumn of 2007. It is a pleasure to thank the participants of these seminars, as well as Luigi Salce and Simone Virili (who gave us copies of preliminary versions of \cite{SVV,Simone} in May 2010) for useful comments. Last but not least, thanks are due also to Peter Vamos, who kindly sent his paper \cite{V} to the first named author in the autumn of 2007 (unfortunately, we realized the full power of Vamos' ideas only after reading \cite{SVV}). 
 

\subsubsection*{Notation and terminology}

We denote by $\mathbb Z$, $\mathbb N$, $\mathbb N_+$, $\Q$ and $\R$ respectively the set of integers, the set of natural numbers, the set of positive integers, the set of rationals and the set of reals. Moreover, $\R_+=\{r\in\R:r\geq0\}$. For $m\in\mathbb N_+$, we use $\mathbb Z(m)$ for the finite cyclic group of order $m$. The free-rank of an abelian group $G$ is denoted by $r_0(G)$.

Let $R$ be a ring. We denote by $R[t]$ the ring of polynomials with coefficients in $R$. We indicated with $\mod_R$ the category of left $R$-modules. For $M\in\mod_R$, the submodule of torsion elements of $M$ is $t(M)$, while $\End(M)$ is the ring of all endomorphisms of $M$.

 For an abelian category $\mathfrak M$ we write $M\in\mathfrak M$ if $M$ is an object of $\mathfrak M$ and $N\subseteq M$ if $N$ is a subobject of $M$. 
For $M\in\mathfrak M$, we denote by $0_M$ the zero morphism of $M$ and by $1_M$ the identity morphism of $M$.
Moreover, for $M_1,M_2\subseteq M$, we denote by $M_1+M_2$ the join of $M_1$ and $M_2$ and by $M_1\cap M_2$ the intersection of $M_1$ and $M_2$. If $N$ is a subobject of $M$, $M/N$ is the quotient object. For a morphism $f:M\to N$ we denote by $f(M)$ the image of $f$, which is a subobject of $N$. Moreover, if $i:K\to M$ is a subobject of $M$, $f(K)$ stands for the image of $f\circ i$, which is a subobject of $N$.

For a family $\{M_j:j\in J\}$ of objects of $\mathfrak M$, we denote by $\bigoplus_{i\in J}M_j$ the coproduct and by $\prod_{j\in J}M_j$ the product, if they exist. In particular, for a cardinal $\alpha$ we denote by $M^{(\alpha)}$ the coproduct $\bigoplus_\alpha M$ (and by $M^\alpha$ the product $\prod_\alpha M$) of $\alpha$ many copies of $M$.

If $\mathfrak M$ is cocomplete, the join $\sum_{j\in J}M_j$ of a family $\{M_j:j\in J\}$ of subobjects of $M\in\mathfrak M$, is the image $f(\bigoplus_{j\in J}M_j)$ of the coproduct $\bigoplus_{j\in J}M_j$
under the canonical morphism $f:\bigoplus_{j\in J}M_j\to M$. 

\section{The Pinsker torsion theory}\label{ef-sec}

\subsection{The lattice of entropy functions}\label{lattice-sec}

As imposed in the introduction, $\mathfrak M$ will be a well-powered cocomplete abelian category in the sequel.

\begin{discussion}\label{A3}
\begin{itemize}
\item[(a)] The axiom {(A2)} implies that $h(M_1\oplus\ldots\oplus M_n)=0$ if and only if $h(M_1)=\ldots=h(M_n)=0$ for $M_1,\ldots,M_n\in\mathfrak M$. 
\item[(b)] If $M\in\mathfrak M$ and $M_1,\ldots,M_n$ are subobjects of $M$, then $h(M_1+\ldots+M_n)=0$ if $h(M_1)=\ldots=h(M_n)=0$. This follows from (a) and {(A2)}, since $M_1+\ldots+M_n$ is a quotient of $M_1\oplus\ldots\oplus M_n$.
\item[(c)] Item (b) can be stated in the following more general form, which is equivalent to (A3) in the presence of {(A1)} and {(A2)}: for a set $\{M_j:j\in J\}$ of objects of $M$, $h(\sum_{j\in J}M_j)=0$ if and only if $h(M_j)=0$ for all $j\in J$.
\end{itemize}
\end{discussion}

\begin{remark}\label{A3d}
For a ring $A$ and $\mod_A$, the axiom (A3) holds precisely when, for $M$ direct limit of its submodules $\{M_j:j\in J\}$, $h(M)=0$ if and only if $h(M_j)=0$ for all $j\in J$.

For $M\in\mod_A$, let $\mathcal F(M)$ be the family of all finitely generated submodules of $M$.
By the previous part of the remark, the axiom (A3) is equivalent also to: $h(M)=0$ if and only if $h(N)=0$ for every $N\in\mathcal F(M)$.

Moreover, $h(A) = 0$ yields that $h \equiv 0$ because of Discussion \ref{A3}(c). 
\end{remark}

The order $\leq$ in $\R_+ \cup \{\infty\}$ (with $x< \infty$ for all $r\in \R_+$) defines a partial order $\prec$ in the class $\H(\mathfrak M)$ of all entropy functions of $\mathfrak M$ by letting
$$
h_1 \prec h_2\ \text{if and only if}\ h_1(M) \leq h_2(M)\ \text{for every}\ M\in\mathfrak M.
$$
For $h_1, h_2 \in \H(\mathfrak M)$ define $h_1 + h_2$ by $(h_1+ h_2)(M) = h_1 (M)+ h_2(M)$ for every $M\in\mathfrak M$. 
Then $h_1 + h_2 \in \H(\mathfrak M)$. In particular, this defines the multiples $nh \in \H(\mathfrak M)$
for $h\in \H(\mathfrak M)$ and $n \in \N_+$ (as usual, we agree that $x + \infty=\infty+ x = \infty$ for all 
$x\in \R_+\cup\{\infty\}$; in particular, $n\infty = \infty$ for $n\in \N_+$). 

\begin{proposition}\label{H-cl}
The pair $(\H(\mathfrak M),\prec)$ is a (large) complete lattice.
\end{proposition}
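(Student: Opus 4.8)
The plan is to show that $(\H(\mathfrak M),\prec)$ is a complete lattice by exhibiting arbitrary infima, from which arbitrary suprema follow by the standard lattice-theoretic argument (the supremum of a family is the infimum of its set of upper bounds, which is always nonempty here since the constantly-$\infty$ function, easily checked to satisfy (A1)--(A3), is an entropy function and dominates everything). So the core of the argument is to construct, for an arbitrary family $\{h_j : j\in J\}\subseteq\H(\mathfrak M)$, its greatest lower bound.

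First I would guess the obvious candidate for the infimum, namely the pointwise infimum $h(M) := \inf_{j\in J} h_j(M)$, computed in $\R_+\cup\{\infty\}$ (which is a complete lattice, so this is well-defined). Clearly $h\prec h_j$ for all $j$, and any lower bound for the family is pointwise $\le h$, so the only thing to verify is that $h$ is itself an entropy function, i.e. satisfies (A1), (A2), (A3). Axiom (A1) is immediate: $h(0)=\inf_j h_j(0)=\inf_j 0 = 0$, and invariance under isomorphism passes through the pointwise infimum. For (A3), note $h(\bigoplus_{j} M_j) = 0$ iff for every $\varepsilon>0$ there is an index with $h_{j_0}(\bigoplus M_j)<\varepsilon$ — but this is the wrong reading; the clean statement is that $h(N)=0$ iff $\inf_j h_j(N)=0$ iff $h_j(N)=0$ for \emph{some} $j$... and here is the subtlety I'd expect to be the main obstacle: the pointwise infimum of functions each satisfying (A2)/(A3) need \emph{not} satisfy (A2)/(A3), because ``$h(M)=0$'' means ``some $h_j(M)=0$'' and the index witnessing this for $M$ may differ from the index witnessing it for a subobject $N$ or a quotient $Q$.

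To get around this I would instead define the infimum via its zero-class. By Theorem \ref{TT} (or rather its torsion-theoretic consequences, anticipated by the structure of Section \ref{tt-sec}) and by Discussion \ref{A3}(c), the zero-class $\mathcal T_h$ of any entropy function is closed under subobjects, quotients, extensions, and arbitrary coproducts — it is (the torsion class of) a hereditary torsion theory. So the right candidate for $\mathcal T_{\inf_j h_j}$ is the smallest torsion class containing $\bigcap_j \mathcal T_{h_j}$ — but in fact $\bigcap_j \mathcal T_{h_j}$ is \emph{already} closed under all these operations (an intersection of torsion classes is a torsion class), so it \emph{is} a hereditary torsion class $\mathcal T$. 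Now I would take as the infimum the \emph{binary} entropy function $h_{\mathcal T}$ whose zero-class is exactly $\mathcal T$ (this exists and is unique by the forward direction of Theorem \ref{hb<->tt}), or, to stay faithful to the pointwise structure, I would need a more careful construction. The cleanest route: define $h(M) = \inf\{\sum_{j\in F} h_j(M) : F\subseteq J \text{ finite}\}$... no — I would define $h(M)=0$ precisely when $M\in\mathcal T:=\bigcap_j\mathcal T_{h_j}$, and otherwise $h(M)=\inf_j h_j(M)$; but the honest, fully rigorous approach is simply to verify directly that the pointwise infimum works after all, using that (A2) and (A3) are conditions about the \emph{vanishing locus} only, and that $\{M : \inf_j h_j(M)=0\} = \bigcap_j \{M: h_j(M)=0\} = \bigcap_j \mathcal T_{h_j}$ — wait, that last equality is false in general; $\inf_j h_j(M)=0$ allows the infimum to be a limit of positive values.

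So the genuinely correct candidate for the meet is: $h(M)=0$ iff $M\in\bigcap_j\mathcal T_{h_j}$, and $h(M)=\infty$ otherwise — that is, the meet in $\H(\mathfrak M)$ of the family is the \emph{binary} entropy function attached to the torsion class $\bigcap_j \mathcal T_{h_j}$. I would verify (A1)--(A3) for this $h$ using that $\bigcap_j\mathcal T_{h_j}$ is a hereditary torsion class (closed under subobjects, quotients, extensions by (A2), arbitrary direct sums by (A3)), which makes (A2) and (A3) for $h$ into direct restatements of these closure properties; and $h\prec h_j$ holds because $h(M)=0$ whenever $M\in\mathcal T_{h_j}$ for all $j$... and more to the point $h(M)\le h_j(M)$ since when $h(M)\ne 0$ we have $h(M)=\infty\ge h_j(M)$ is \emph{wrong}. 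I now see the real fix, which I'd commit to in the writeup: the meet of $\{h_j\}$ is $\bigwedge_j h_j$ defined by $(\bigwedge_j h_j)(M) = \sup\{\, g(M) : g\in\H(\mathfrak M),\ g\prec h_j\ \forall j\,\}$, and dually for joins, so that existence of all suprema is the thing to prove; a supremum of $\{h_j\}$ is furnished by $\sup_j h_j$ (pointwise), for which (A1) is clear, and (A2)/(A3) hold because $\sup_j h_j(M)=0$ iff $h_j(M)=0$ for \emph{every} $j$, iff $M\in\bigcap_j\mathcal T_{h_j}$, and this intersection is closed under sub/quotient/extension/coproduct exactly because each $\mathcal T_{h_j}$ is. \emph{This} is the clean argument, and I would organize the proof around it: (i) the constantly-$0$ function is in $\H(\mathfrak M)$, giving a bottom; (ii) $\sup_j h_j$ (pointwise) is in $\H(\mathfrak M)$ — the only nontrivial point, handled by the intersection-of-zero-classes remark together with Discussion \ref{A3} and (A2) — hence every family has a join; (iii) therefore every family has a meet (the join of its lower bounds), and $(\H(\mathfrak M),\prec)$ is a complete lattice, large because $\mathfrak M$ may have a proper class of (iso-classes of) objects. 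The main obstacle, as the discussion above illustrates, is precisely the verification that the pointwise supremum satisfies (A2) and (A3); this rests on the observation that the zero-locus of a supremum is the intersection of the zero-loci, and that an intersection of torsion classes is again a torsion class.
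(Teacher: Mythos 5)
Your final committed argument --- the constant zero function as bottom element plus pointwise suprema, with (A1)--(A3) for $\sup_j h_j$ verified via the observation that its zero-locus is the intersection of the zero-loci of the $h_j$ --- is exactly the paper's proof, which simply states that the pointwise supremum ``is easy to see'' to lie in $\H(\mathfrak M)$. One stray remark in your exploratory discussion is false but harmless: the constantly-$\infty$ function is \emph{not} in $\H(\mathfrak M)$, since (A1) forces $h(0)=0$; your final route (i)--(iii) never uses it.
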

\begin{proof}
Since the constant zero is the bottom element of $\H(\mathfrak M)$, to show that $(\H(\mathfrak M),\prec)$ is a complete lattice it suffices to verify that there exist arbitrary suprema. So, for a class $\{h_j\in\H(\mathfrak M): j\in J\}$, let $h=\sup_{j\in J} h_j$ be defined by $h(M) = \sup_{j\in J} h_j(M)$ for every $M\in\mathfrak M$. It is easy to see that $h$ is still in $\H(\mathfrak M)$. 
\end{proof}

Let $$\HB(\mathfrak M) = \{h\in \H(\mathfrak M): h\ \text{is binary}\}.$$

\begin{corollary}\label{Hb-cl}
The sublattice $(\HB(\mathfrak M),\prec)$ of $(\H(\mathfrak M),\prec)$ is a complete lattice.
\end{corollary}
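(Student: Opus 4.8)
The plan is to repeat, almost verbatim, the argument used for Proposition~\ref{H-cl}. That argument rests only on the order-theoretic fact that a poset which has a least element and in which every subset has a supremum is automatically a complete lattice (the infimum of a family then being the supremum of its lower bounds, a non-empty set since it contains the least element). So it will suffice to check that $\HB(\mathfrak M)$ has a least element and is closed under the suprema formed in $\H(\mathfrak M)$.

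First I would note that the constant function $0$ is a binary entropy function --- it trivially satisfies (A1), (A2) and (A3) and takes only the value $0$ --- and, being the bottom element of $\H(\mathfrak M)$, it is a fortiori the bottom of $\HB(\mathfrak M)$. Next, given a family $\{h_j : j\in J\}\subseteq\HB(\mathfrak M)$, I would invoke Proposition~\ref{H-cl}: the supremum $h=\sup_{j\in J}h_j$ exists in $\H(\mathfrak M)$ and is computed pointwise, $h(M)=\sup_{j\in J}h_j(M)$ for every $M\in\mathfrak M$. Since every $h_j(M)\in\{0,\infty\}$, also $h(M)\in\{0,\infty\}$ --- it equals $0$ when $h_j(M)=0$ for all $j$, and $\infty$ otherwise --- so $h$ is binary, $h\in\HB(\mathfrak M)$, and $h$ is clearly also the supremum of the family inside $\HB(\mathfrak M)$. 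By the order-theoretic fact recalled above, $(\HB(\mathfrak M),\prec)$ is then a complete lattice.

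I do not expect any real obstacle; the statement is a direct corollary of Proposition~\ref{H-cl}. Two minor points deserve a word. First, that $\HB(\mathfrak M)$ is a \emph{sublattice} of $\H(\mathfrak M)$ is, for joins, immediate from the computation above; closure under finite meets is less transparent and is most cleanly obtained afterwards, from the correspondence of Theorem~\ref{hb<->tt}. Second, in contrast to suprema, infima in $\HB(\mathfrak M)$ are in general \emph{not} computed pointwise --- the pointwise minimum of two binary entropy functions may violate (A3), being $0$ on two objects yet $\infty$ on their direct sum. Neither point affects the completeness argument, which needs nothing beyond a least element and arbitrary suprema.
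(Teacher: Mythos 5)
Your proof is correct and takes essentially the same route as the paper: the paper disposes of the corollary by observing that $\HB(\mathfrak M)$ is stable under the suprema and infima formed in $\H(\mathfrak M)$, while you verify the pointwise suprema together with the bottom element $0$, which indeed suffices for completeness via the standard order-theoretic fact. Your closing remarks (joins of binary functions are pointwise, infima in general are not) are accurate and do not affect the argument.
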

\begin{proof}
It is easy to see that $\HB(\mathfrak M)$ is stable under taking suprema and infima in $\H(\mathfrak M)$.
\end{proof}
 
This allows us to give the following

\begin{definition}
The \emph{binary hull} of $h\in\H(\mathfrak M)$ is the smallest $h^b\in \HB(\mathfrak M)$ above $h$, i.e., $$h^b = \inf \{h'\in \HB(\mathfrak M): h'\succ h\}.$$
\end{definition}

One can ``approximate" the binary hull $h^b$ also from below, as the next proposition shows: 

\begin{proposition}\label{bheq}
Let $h\in\H(\mathfrak M)$. Then the following conditions are equivalent: 
\begin{itemize}
  \item[(a)] $h\in\H_b(\mathfrak M)$;
  \item[(b)] $h= nh$ for every $n \in \N_+$;
  \item[(c)] there exists $n >1$ such that $h= nh$.
\end{itemize}
Consequently, $h^b = \sup \{nh: n\in \N\}.$ In particular, for every $M\in\mathfrak M$, 
\begin{equation}\label{h*}
h^b(M)=\begin{cases}0 & \text{if and only if}\ h(M)=0,\\ \infty & \text{otherwise}.\end{cases}
\end{equation}
\end{proposition}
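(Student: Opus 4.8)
The plan is to prove the cycle of implications (a)$\Rightarrow$(b)$\Rightarrow$(c)$\Rightarrow$(a), and then deduce the formula for $h^b$ and \eqref{h*} as easy consequences. The implication (a)$\Rightarrow$(b) is immediate from the definitions: if $h$ is binary, then $h(M)\in\{0,\infty\}$ for every $M$, and since $n\cdot 0=0$ and $n\cdot\infty=\infty$ by the stated conventions, $nh(M)=h(M)$ for every $M\in\mathfrak M$ and every $n\in\N_+$. The implication (b)$\Rightarrow$(c) is trivial (take any $n>1$). So the only implication requiring an argument is (c)$\Rightarrow$(a).

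For (c)$\Rightarrow$(a), suppose $h=nh$ for some fixed $n>1$. I want to show $h$ takes only the values $0$ and $\infty$, i.e. that $h(M)\in\R_+$ (finite) forces $h(M)=0$. Fix $M$ with $h(M)=r<\infty$. Then $h(M)=nh(M)$ reads $r=nr$ in $\R_+\cup\{\infty\}$; since $r$ is finite and $n>1$, this linear equation over $\R_+$ has the unique solution $r=0$. Hence $h(M)=0$. Since this holds for every $M$ at which $h$ is finite, $h$ is binary. (Note this uses nothing about the axioms (A1)--(A3) beyond the fact that $h$ maps into $\R_+\cup\{\infty\}$ and that $nh$ is computed pointwise; the arithmetic convention $n\infty=\infty$ was already fixed before the proposition.)

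Next, the formula $h^b=\sup\{nh:n\in\N\}$. Write $g=\sup\{nh:n\in\N\}$; note $g\in\H(\mathfrak M)$ by Proposition~\ref{H-cl} (the supremum of a family of entropy functions is again one), and $g\succ h$ since $g\succeq 1\cdot h=h$. First I check $g$ is binary: for each $M$, if $h(M)=0$ then $nh(M)=0$ for all $n$, so $g(M)=0$; if $h(M)>0$ then $h(M)\geq c$ for some $c>0$ (or $h(M)=\infty$), so $nh(M)\to\infty$, giving $g(M)=\infty$. Thus $g$ satisfies condition (c) of the proposition (indeed $g=ng$ for all $n$, by the binary case already proved, i.e. (a)$\Rightarrow$(b)), so $g\in\HB(\mathfrak M)$. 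Since $g$ is a binary entropy function above $h$, we get $h^b\preceq g$. Conversely, for any $h'\in\HB(\mathfrak M)$ with $h'\succ h$, we have $nh'=h'$ for all $n$ (by (a)$\Rightarrow$(b) applied to $h'$), hence $h'=nh'\succeq nh$ for every $n$, so $h'\succeq g$; taking the infimum over all such $h'$ gives $h^b\succeq g$. Therefore $h^b=g$. Finally \eqref{h*} is just the description of $g(M)$ computed in this paragraph: $h^b(M)=g(M)=0$ exactly when $h(M)=0$, and $=\infty$ otherwise.

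I do not anticipate a serious obstacle: the whole statement is a soft manipulation of the pointwise order and the arithmetic of $\R_+\cup\{\infty\}$, together with the already-established fact (Proposition~\ref{H-cl}, Corollary~\ref{Hb-cl}) that $\H(\mathfrak M)$ and $\HB(\mathfrak M)$ are closed under the relevant suprema. The only point deserving a moment's care is the direction (c)$\Rightarrow$(a), where one must remember to treat separately the case $h(M)=\infty$ (trivially fine) and the case $h(M)$ finite (where $r=nr$ with $n>1$ forces $r=0$); and, in the supremum formula, to justify that $\sup\{nh:n\in\N\}$ is genuinely binary before invoking $h^b\preceq\sup\{nh\}$.
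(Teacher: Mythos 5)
Your proposal is correct and follows essentially the same route as the paper: the only non-trivial implication (c)$\Rightarrow$(a) is handled by the same pointwise arithmetic observation that $r=nr$ with $n>1$ and $r$ finite forces $r=0$, and the identification $h^b=\sup\{nh:n\in\N_+\}$ is obtained, as in the paper, by checking that this supremum is a binary entropy function above $h$ and comparing it with any binary $h'\succ h$ via $h'=nh'\succ nh$. The only cosmetic difference is that the paper applies this last comparison directly to $h^b$ (using that $h^b\in\HB(\mathfrak M)$ by Corollary \ref{Hb-cl}), whereas you compare with an arbitrary $h'$ and then pass to the infimum; both are sound.
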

\begin{proof} 
(a)$\Rightarrow$(b) and (b)$\Rightarrow$(c) are clear. To prove (c)$\Rightarrow$(a) assume that $h= nh$
for some $n >1$. Then $h(M)=n h(M)$, so $h(M)$ is either $0$ or $\infty$  for every $M\in\mathfrak M$. 
In other words, $h\in\HB(\mathfrak M)$.

\smallskip
Let $h^*= \sup \{nh: n\in \N_+\}$. Since $h$ is an entropy function, for every $n\in \N_+$ also $n h$ is an entropy function; this follows directly from the definition. 
Hence, $h^*$ is an entropy function as well. Since $h^b$ is binary, by item (b) we have $h^b=n h^b\succ nh$ for every $n\in\N_+$. Then $h^b\succ h^*$. 
If $M\in\mathfrak M$ and $h(M)=0$, then $nh(M)=0$ for every $n\in\N_+$ and so $h^*(M)=0$; if $h(M)>0$, then $h^*(M)\geq n h(M)$ for every $n\in\N_+$ and consequently $h^*(M)=\infty$. This shows that $h^*$ is binary. Clearly, $h^*\succ h$, and so $h^*\succ h^b$ by definition of binary hull, hence $h^*=h^b$.
As a by product we have seen that \eqref{h*} holds.
\end{proof}

\begin{remark}\label{NewRemark}
If $h, h_1:\mathfrak M\to \R_+\cup\{\infty\}$ are two  functions such that $h(M)= 0$ if and only if $h_1(M)=0$ for every $M \in \mathfrak M$, then 
$h\in\H(\mathfrak M)$ if and only if $h_1\in\H(\mathfrak M)$. In other words, the only value that matters is $0$, when we have to test when a function $h:\mathfrak M\to \R_+\cup\{\infty\}$ is an entropy function.
\end{remark}

\subsection{The Pinsker radical}\label{p-sec}

A \emph{preradical} $\r$ of $\mathfrak M$ is a subfunctor $\r:\mathfrak M\to\mathfrak M$ of the identity functor $id_\mathfrak M:\mathfrak M\to \mathfrak M$; equivalently, for all $M\in\mathfrak M$ there is a monomorphism $\r(M)\to M$ so that every morphism $f: M \to N$ in $\mathfrak M$ restricts to $\r(f):\r(M)\to \r(N)$ (i.e., $f(\r(M))\subseteq \r(N)$).

The preradical $\r$ is:
\begin{itemize}
  \item[(a)] a \emph{radical}, if $\r(M/\r (M))=0$ for every $M\in\mathfrak M$;
  \item[(b)] \emph{idempotent}, if $\r(\r (M))=\r(M)$ for every $M\in\mathfrak M$;
  \item[(c)] \emph{hereditary}, if $\r (N)=N \cap \r (M)$ for every $M\in\mathfrak M$ and every subobject $N$ of $M$. 
\end{itemize}
Hereditary preradicals are idempotent, but need not be radicals. A radical need not be idempotent. 

\begin{definition}
Let $h\in\H(\mathfrak M)$. The \emph{Pinsker radical} $\P_h:\mathfrak M\to\mathfrak M$ with respect to $h$ is defined for every object $M$ of $\mathfrak M$ as the join $$\P_h(M)=\sum\{N_j\subseteq M: h(N_j)=0\}.$$
\end{definition}

If $\mathfrak M=\mod_A$ for some ring $A$, then for every $M\in\mathfrak M$, $\P_h(M)=\sum\{Am:m\in M\ \text{and}\ h(Am)=0\}$.

\begin{lemma}\label{hP=0}
Let $h\in\H(\mathfrak M)$ and $M\in\mathfrak M$. Then $h(\P_h(M))=0$ and $\P_h(M)$ is the greatest subobject of $M$ with this property.
\end{lemma}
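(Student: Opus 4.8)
The plan is to show two things: first that $h(\P_h(M)) = 0$, and second that any subobject $N \subseteq M$ with $h(N) = 0$ is contained in $\P_h(M)$ (the latter is immediate from the definition of $\P_h(M)$ as the join of all such subobjects, so the substance lies in the first claim).

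\medskip
First I would recall that, by construction, $\P_h(M) = \sum\{N_j \subseteq M : h(N_j) = 0\}$ is the join of the family $\mathcal{N} = \{N_j \subseteq M : h(N_j) = 0\}$ of subobjects of $M$, and that in a cocomplete abelian category this join is the image of the canonical morphism $f : \bigoplus_{j} N_j \to M$. The natural route is to invoke Discussion \ref{A3}(c): for a set $\{N_j : j \in J\}$ of objects, $h(\sum_{j \in J} N_j) = 0$ if and only if $h(N_j) = 0$ for all $j \in J$. Since every $N_j$ in the family $\mathcal{N}$ satisfies $h(N_j) = 0$ by definition, that statement gives directly $h(\P_h(M)) = h(\sum_j N_j) = 0$.

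\medskip
The one point requiring care is set-theoretic: Discussion \ref{A3}(c) is phrased for a \emph{set} of subobjects, whereas $\P_h(M)$ is a priori a join over a \emph{class}. This is exactly where the hypothesis that $\mathfrak M$ is \emph{well-powered} enters: the subobjects of $M$ form a set, so $\mathcal{N}$ is a set and Discussion \ref{A3}(c) applies verbatim. I would make this observation explicit. (Equivalently, one can pass to the small version: by well-poweredness choose a representative set of the subobjects with $h = 0$; their join is still $\P_h(M)$ by cofinality, and then (A3) together with (A2) as packaged in Discussion \ref{A3}(c) finishes it.)

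\medskip
For the maximality clause, if $N \subseteq M$ and $h(N) = 0$, then $N$ is one of the $N_j$'s, hence $N \subseteq \sum_j N_j = \P_h(M)$; combined with $h(\P_h(M)) = 0$ this says $\P_h(M)$ is the greatest subobject of $M$ on which $h$ vanishes. I do not anticipate a genuine obstacle here — the proof is essentially an application of Discussion \ref{A3}(c) — the only thing to be vigilant about is the class-versus-set subtlety just mentioned, and making sure the identification of the join with $f(\bigoplus_j N_j)$ (recalled in the Notation section) is used when quoting (A3)/(A2).
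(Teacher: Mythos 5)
Your proof is correct and follows essentially the same route as the paper: the maximality is immediate from the definition of $\P_h(M)$ as the join of the subobjects on which $h$ vanishes, and $h(\P_h(M))=0$ follows from (A3) via Discussion \ref{A3}(c). Your explicit remark that well-poweredness makes the family of such subobjects a set is a point the paper leaves implicit, but it is the same argument.
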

\begin{proof} 
Clearly, $\P_h(M)$ is a subobject of $M$, and if $N$ is a subobject of $M$ with $h(N)=0$, then $N\subseteq\P_h(M)$.
By definition, $\P_h(M)$ is the join of the family $\{N_j\subseteq M: h(N_j)=0\}$. In view of (A3) and Discussion \ref{A3}(c), we have $h(\P_h(M))=0$.
\end{proof}

\begin{theorem}\label{TT}
Let $h\in\H(\mathfrak M)$. Then $\P_h:\mathfrak M\to\mathfrak M$ is a hereditary radical.
\end{theorem}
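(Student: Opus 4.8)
The plan is to verify the three defining properties of a hereditary radical for $\P_h$, using only the axioms (A1)--(A3) and Lemma~\ref{hP=0}, which already guarantees that $h(\P_h(M))=0$ and that $\P_h(M)$ is the \emph{largest} subobject of $M$ on which $h$ vanishes. That maximality is the workhorse of the whole proof: once I know $\P_h(M)$ is characterized as ``the greatest subobject with zero $h$'', most of the functoriality and the radical/hereditary conditions follow by comparing subobjects.

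First I would check that $\P_h$ is a preradical, i.e. a subfunctor of the identity. The monomorphism $\P_h(M)\to M$ is there by definition. For functoriality, let $f:M\to N$ be a morphism in $\mathfrak M$; I must show $f(\P_h(M))\subseteq\P_h(N)$. Now $f(\P_h(M))$ is a quotient of $\P_h(M)$ (it is the image of $f$ restricted to $\P_h(M)$), so by (A2) applied to the exact sequence $0\to\ker\to\P_h(M)\to f(\P_h(M))\to 0$ we get $h(f(\P_h(M)))=0$. Since $f(\P_h(M))$ is a subobject of $N$ with zero entropy, Lemma~\ref{hP=0} gives $f(\P_h(M))\subseteq\P_h(N)$, as required. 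Heredity is then nearly immediate: for $N\subseteq M$, the subobject $N\cap\P_h(M)$ sits inside $\P_h(M)$, so by (A2) (it is a subobject, and subobjects of zero-entropy objects have zero entropy by the ``only if'' direction of (A2)) it has $h$-value $0$; being a subobject of $N$ with zero entropy, $N\cap\P_h(M)\subseteq\P_h(N)$. Conversely $\P_h(N)$ is a subobject of $M$ with zero entropy contained in $N$, so $\P_h(N)\subseteq N\cap\P_h(M)$. Hence $\P_h(N)=N\cap\P_h(M)$, which is the hereditary condition (and in particular gives idempotence, since $\P_h(\P_h(M))=\P_h(M)\cap\P_h(M)=\P_h(M)$).

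The remaining and most delicate point is that $\P_h$ is a radical, i.e. $\P_h(M/\P_h(M))=0$ for every $M$. Write $P=\P_h(M)$ and let $\pi:M\to M/P$ be the canonical projection. Suppose toward a contradiction that $K=\P_h(M/P)$ is a nonzero subobject of $M/P$; then $h(K)=0$. Pull $K$ back along $\pi$: let $L=\pi^{-1}(K)\subseteq M$, so that $L$ fits into an exact sequence $0\to P\to L\to K\to 0$. We have $h(P)=0$ (by Lemma~\ref{hP=0}) and $h(K)=0$, so by the ``if'' direction of (A2) we conclude $h(L)=0$. Therefore $L$ is a subobject of $M$ with zero entropy, hence $L\subseteq P$ by maximality; but then $K=\pi(L)\subseteq\pi(P)=0$, contradicting $K\neq 0$. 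This forces $\P_h(M/P)=0$, so $\P_h$ is a radical. The one subtlety to spell out carefully is the exactness of $0\to P\to L\to K\to 0$ in a general abelian category: one identifies $L=\pi^{-1}(K)$ via the pullback of the mono $K\hookrightarrow M/P$ along $\pi$, uses that $\pi$ is an epimorphism with kernel $P$, and invokes the standard fact that pullbacks preserve epimorphisms and kernels in an abelian category. I expect this verification of exactness to be the only place requiring genuine care; everything else is a routine application of (A2), (A3) and the maximality clause of Lemma~\ref{hP=0}.
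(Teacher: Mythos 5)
Your proof is correct and follows essentially the same route as the paper: functoriality via (A2) applied to images, the radical property via pulling a zero-entropy subobject of $M/\P_h(M)$ back along the projection and applying the extension direction of (A2), and heredity via the two inclusions using the maximality in Lemma \ref{hP=0}. The only cosmetic differences are that you apply the argument directly to $K=\P_h(M/\P_h(M))$ by contradiction (the paper takes an arbitrary zero-entropy subobject) and you deduce $h(f(\P_h(M)))=0$ from Lemma \ref{hP=0} rather than from the join decomposition, both of which are equally valid.
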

\begin{proof}
First we see that $\P_h:\mathfrak M\to\mathfrak M$ is a functor. Indeed, $\P_h(M)$ is a subobject of $M$ for every $M\in\mathfrak M$. Moreover, it is well-defined on morphisms $f:M\to N$ in $\mathfrak M$. In fact, by the standard properties of images and joins, $$f(\P_h(M))=\sum\{f(N_j):N_j\subseteq M\ \text{and}\ h(N_j)=0\};$$ moreover, $h(N_j)=0$ implies $h(f(N_j))=0$ by (A2), and so $f(\P_h(M))\subseteq \P_h(N)$. Finally, if $g:N\to L$ is another morphism in $\mathfrak M$, then $\P_h(g\circ f)=\P_h(g)\circ\P_h(f)$ and $\P_h(id_M)=id_{\P_h(M)}$.

We prove now that $\P_h$ is a radical. Let $M\in\mathfrak M$.
Assume that $N$ is a subobject of $M/\P_h(M)$ such that $h(N)=0$. Let $N'$ be the pullback of $N$ along the projection $M\to M/\P_h(M)$. By (A2) applied to $N'$, $\P_h(M)$ and $N'/\P_h(M)=N$, we can conclude that $h(N')=0$. By the definition of $\P_h(M)$, this yields $N'\subseteq \P_h(M)$, and so $N=0$. Consequently $\P_h(M/\P_h(M))=0$ and hence $\P_h$ is a radical.

To show that the radical $\P_h$ is hereditary, consider $M\in\mathfrak M$ and a $N\subseteq M$. It is clear that $\P_h(N)\subseteq N\cap \P_h(M)$. By Lemma \ref{hP=0} we have $h(\P_h(M))=0$, and so $h(N\cap \P_h(M))=0$ by (A2). Since $N\cap \P_h(M)\subseteq N$, Lemma \ref{hP=0} implies $N\cap\P_h(M)\subseteq \P_h(N)$.
\end{proof}

The equivalent definition of the binary hull given by Proposition \ref{bheq} has an easy consequence on the Pinsker radical:

\begin{corollary}\label{Ph=Phb}
If $h\in\H(\mathfrak M)$, then $\P_h=\P_{h^b}$.
\end{corollary}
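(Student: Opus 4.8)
\textbf{Proof proposal for Corollary \ref{Ph=Phb}.}

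The plan is to exploit the fact, established in the last line of Proposition \ref{bheq} (equation \eqref{h*}), that $h^b(M)=0$ if and only if $h(M)=0$ for every $M\in\mathfrak M$. This is exactly the hypothesis of Remark \ref{NewRemark}, which is why the corollary should be essentially immediate: the Pinsker radical, as its definition $\P_h(M)=\sum\{N_j\subseteq M:h(N_j)=0\}$ makes clear, depends on $h$ only through the subclass $\mathcal T_h=\{M\in\mathfrak M:h(M)=0\}$ of objects on which $h$ vanishes.

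Concretely, I would argue as follows. Fix $M\in\mathfrak M$. By definition, $\P_h(M)=\sum\{N\subseteq M:h(N)=0\}$ and $\P_{h^b}(M)=\sum\{N\subseteq M:h^b(N)=0\}$. By \eqref{h*} in Proposition \ref{bheq}, for any object $N$ we have $h^b(N)=0\iff h(N)=0$; hence the two index families $\{N\subseteq M:h(N)=0\}$ and $\{N\subseteq M:h^b(N)=0\}$ of subobjects of $M$ coincide, and therefore their joins coincide: $\P_h(M)=\P_{h^b}(M)$. Since this holds for every $M\in\mathfrak M$ and the action on morphisms is determined by the action on objects (both preradicals restrict a morphism $f$ to its corestriction to the relevant join, as in the proof of Theorem \ref{TT}), we get $\P_h=\P_{h^b}$ as functors.

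There is really no obstacle here; the only thing to double-check is that $h^b$ is indeed a legitimate entropy function (so that $\P_{h^b}$ is defined and Theorem \ref{TT} applies), but this was already recorded in Corollary \ref{Hb-cl} together with the definition of the binary hull, and in any case Remark \ref{NewRemark} guarantees $h^b\in\H(\mathfrak M)$ directly from $h\in\H(\mathfrak M)$ and \eqref{h*}. One could also phrase the whole corollary as an instance of a more general principle — if $h_1,h_2\in\H(\mathfrak M)$ have the same zero-class $\mathcal T_{h_1}=\mathcal T_{h_2}$, then $\P_{h_1}=\P_{h_2}$ — but for the statement as given, invoking \eqref{h*} is the cleanest route.
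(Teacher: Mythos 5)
Your proof is correct and follows exactly the route the paper intends: the corollary is stated as an immediate consequence of Proposition \ref{bheq}, namely that \eqref{h*} gives $h^b(N)=0\iff h(N)=0$, so the families of subobjects whose join defines $\P_h(M)$ and $\P_{h^b}(M)$ coincide. Your extra check that $h^b\in\H(\mathfrak M)$ (via Remark \ref{NewRemark} or Corollary \ref{Hb-cl}) is a sound precaution but adds nothing beyond what the paper already records.
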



\subsection{The torsion theory associated to an entropy function}\label{tt-sec}

For the following definition and discussion we refer to \cite{D} and \cite{Gol}.

\begin{definition}
A pair $(\mathcal T,\mathcal F)$ of non-empty subclasses of an abelian category $\mathfrak M$ is a \emph{torsion theory} in $\mathfrak M$ if:
\begin{itemize}
\item[(i)] $\mathcal T\cap \mathcal F=\{0\}$;
\item[(ii)] if $T\to A\to 0$ is exact with $T\in\mathcal T$, then $A\in\mathcal T$; 
\item[(iii)] if $0\to A\to F$ is exact with $F\in\mathcal F$, then $A\in\mathcal F$; 
\item[(iv)] for every $M\in\mathfrak M$ there exists an exact sequence $0\to T\to M\to F\to 0$ with $T\in\mathcal T$ and $F\in\mathcal F$. 
\end{itemize}
\end{definition}

The class $\mathcal T$ is the class of \emph{torsion} objects in $\mathfrak M$, while $\mathcal F$ is the class of \emph{torsion-free} objects in $\mathfrak M$.

\begin{remark}\label{rremark}
A pair $(\mathcal T,\mathcal F)$ of non-empty subclasses of $\mathfrak M$ is a \emph{torsion theory} in $\mathfrak M$ if and only if:
\begin{itemize}
\item[(a)] $\Hom(T,F)=0$ for every $T\in\mathcal T$ and every $F\in\mathcal F$; and
\item[(b)] $\mathcal T$ and $\mathcal F$ are maximal with respect to (a).
\end{itemize}
Moreover, by \cite[Theorem 2.3]{D},
\begin{itemize}
\item[(i)] $\mathcal T$ is a torsion class if and only if it is closed under quotients, coproducts and extensions in $\mathfrak M$; and dually,
\item[(ii)] $\mathcal F$ is a torsion-free class if and only if it is closed under subobjects, products and extensions in $\mathfrak M$.
\end{itemize}
\end{remark}

For two torsion theories $\mathfrak t_j= ({\mathcal T}_j, {\mathcal F}_j)$, $j=1,2$, in $\mathfrak M$ one sets ${\mathfrak t}_1 \leq {\mathfrak t}_2$ if and only if $\mathcal F_1 \subseteq \mathcal F_2$ (or, equivalently, $\mathcal T_1 \supseteq\mathcal T_2$).

\begin{definition}
A torsion theory $(\mathcal T,\mathcal F)$ in $\mathfrak M$ is \emph{hereditary} if $\mathcal T$ is closed under subobjects.
\end{definition}

If $\r$ is an idempotent preradical of $\mathfrak M$, let
$$
\mathcal T_\r=\{M\in\mathfrak M: \r(M)=M\}\ \text{and}\ \mathcal F_\r=\{M\in\mathfrak M:\r(M)=0\}.
$$    
It is known that if $\r$ is a (hereditary) radical of $\mathfrak M$, then $\mathfrak t_\r=(\mathcal T_\r, \mathcal F_\r)$ is a (hereditary) torsion theory.

\smallskip
Every torsion theory $\mathfrak t=(\mathcal T,\mathcal F)$ in $\mathfrak M$ can be obtained by means of a radical $\r$.
Indeed, by \cite[Proposition 2.4]{D}, for every object $M$ in $\mathfrak M$ there exists a unique greatest subobject $M_\mathfrak t$ of $M$ such that $M_\mathfrak t\in\mathcal T$ and $M/M_\mathfrak t\in\mathcal F$ (with $M_\mathfrak t=\sum\{T\subseteq M:T\in\mathcal T\}$).
Moreover, by \cite[Corollary 2.5]{D} the correspondence $\r_\mathfrak t:M\mapsto M_\mathfrak t$ is an idempotent radical of $\mathfrak M$ (if $\mathfrak t$ is hereditary, then $\r_\mathfrak t$ is hereditary too). So starting from an idempotent radical $\r$ of $\mathfrak M$, we have that $\r_{\mathfrak t_\r}=\r$. In other words, there is a bijective correspondence between of all  idempotent radicals of $\mathfrak M$ and all torsion theories in $\mathfrak M$, so that hereditary radicals of $\mathfrak M$ correspond to hereditary torsion theories in $\mathfrak M$.

\begin{definition}\label{tr-closure}
For a torsion theory $\mathfrak t_\mathbf r$ in $\mathfrak M$, a subobject $N$ of $M\in\mathfrak M$ is $\mathfrak t_\mathbf r$-{\em closed} if $\mathbf r(M/N) =0$. 

More generally, the $\mathfrak t_\mathbf r$-{\em closure}  $\mathrm{cl}_{\mathfrak t_\mathbf r}(N)$ of a subobject $N$ of $M$ is the pullback of $\mathbf r(M/N)$ along the projection $M\to M/N$.
\end{definition}

Note that the assignment $N \mapsto \mathrm{cl}_{\mathfrak t_\mathbf r}(N)$ is a closure operator in the sense of \cite{DGiu,DT}. 

For example, if $\mathfrak M=\abg$ and $\mathbf r(G)= t(G)$, where $G$ is a torsion-free abelian group, this gives the classical notions of pure subgroup and purification, respectively.  

\medskip
For an entropy function $h$ of $\mathfrak M$, the two classes $\mathcal T_h$ and $\mathcal F_h$ defined in the introduction can be expressed in terms of the Pinsker radical as $$\mathcal T_h=\{M\in\mathfrak M: \P_h(M)=M\}\ \text{and}\ \mathcal F_h=\{M\in\mathfrak M:\P_h(M)=0\}.$$
In other words, $\mathcal T_h=\mathcal T_{\P_h}$ and $\mathcal F_h=\mathcal F_{\P_h}$. We abbreviate $\mathfrak t_{\P_h}$ simply to $\mathfrak t_h$.
According to Theorem \ref{TT}, $\mathfrak t_h$ is a hereditary torsion theory in $\mathfrak M$, so we give the following

\begin{definition}
For $h\in\H(\mathfrak M)$ we call $\mathfrak t_h$ the \emph{Pinsker torsion theory} of $h$ in $\mathfrak M$.
\end{definition}

Since for $M\in\mathfrak M$, $h(M)=0$ if and only if $\P_h(M)=M$, Remark \ref{rremark}(i) and Definition \ref{h-def} implies that $\mathcal T_h$ is the torsion class of a hereditary torsion theory.

If $h(M)>0$ for every non-zero $M\in\mathfrak M$, then $\P_h(M)=0$. In this case $\mathcal T_h=\{0\}$ and $\mathcal F_h=\mathfrak M$. 
 
\begin{theorem}\label{tt}\label{hb<->tt}
If $h\in\H(\mathfrak M)$, then $\mathfrak t_h$ is a hereditary torsion theory in $\mathfrak M$. The assignment $h \mapsto {\mathfrak t}_h = (\mathcal T_h, \mathcal F_h)$
determines a bijective order preserving correspondence between binary entropy functions of $\mathfrak M$ and hereditary torsion theories ${\mathfrak t}= ({\mathcal T}, {\mathcal F})$ in $\mathfrak M$. 
\end{theorem}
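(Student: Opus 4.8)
The plan is to establish the three assertions of the theorem in turn: that $\mathfrak t_h$ is a hereditary torsion theory, that the assignment $h\mapsto\mathfrak t_h$ is order preserving, and that it restricts to a bijection between $\HB(\mathfrak M)$ and the class of hereditary torsion theories in $\mathfrak M$. The first assertion is essentially already in hand: by Theorem \ref{TT} the Pinsker radical $\P_h$ is a hereditary radical of $\mathfrak M$, and the discussion preceding the statement records that a hereditary radical $\r$ gives rise to a hereditary torsion theory $\mathfrak t_\r=(\mathcal T_\r,\mathcal F_\r)$; since $\mathcal T_h=\mathcal T_{\P_h}$ and $\mathcal F_h=\mathcal F_{\P_h}$, this is immediate. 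Alternatively, one can verify the closure properties of $\mathcal T_h$ directly from the axioms: $\mathcal T_h$ is closed under quotients by (A2), under coproducts by (A3), under subobjects by (A2), and under extensions by (A2); by Remark \ref{rremark}(i) this makes $\mathcal T_h$ a hereditary torsion class, and $\mathcal F_h$ is then its torsion-free class (one checks $\mathcal F_h=\{M:\Hom(T,M)=0\ \forall T\in\mathcal T_h\}$ via $\P_h$).

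For order preservation: if $h_1\prec h_2$, then $h_2(M)=0$ implies $h_1(M)=0$, so $\mathcal T_{h_2}\subseteq\mathcal T_{h_1}$, which by the definition $\mathfrak t_1\leq\mathfrak t_2\iff\mathcal T_1\supseteq\mathcal T_2$ means $\mathfrak t_{h_1}\leq\mathfrak t_{h_2}$. This direction, and the analogous implication for the inverse map, are routine.

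The substance is the bijection. The plan is to construct the inverse map explicitly. Given a hereditary torsion theory $\mathfrak t=(\mathcal T,\mathcal F)$ in $\mathfrak M$, define $h_\mathfrak t:\mathfrak M\to\R_+\cup\{\infty\}$ by $h_\mathfrak t(M)=0$ if $M\in\mathcal T$ and $h_\mathfrak t(M)=\infty$ otherwise. I would first check $h_\mathfrak t\in\HB(\mathfrak M)$: (A1) holds since $\mathcal T$ is closed under isomorphism and contains $0$; (A2) is exactly the statement that $\mathcal T$ is closed under subobjects, quotients and extensions, which holds because $\mathfrak t$ is a hereditary torsion theory (using Remark \ref{rremark}(i) and the hereditariness hypothesis); (A3) is closure of $\mathcal T$ under coproducts, again from Remark \ref{rremark}(i). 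Binariness is built in. Next, $\mathcal T_{h_\mathfrak t}=\{M:h_\mathfrak t(M)=0\}=\mathcal T$, and since a hereditary torsion theory is determined by its torsion class (the discussion before the statement gives the bijection between idempotent radicals and torsion theories, hence $\mathfrak t$ is recovered from $\mathcal T$ via $\r_\mathfrak t$), we get $\mathfrak t_{h_\mathfrak t}=\mathfrak t$. For the other composite: starting from $h\in\HB(\mathfrak M)$, I must show $h_{\mathfrak t_h}=h$. Since both are binary, by Remark \ref{NewRemark} it suffices to check they have the same zero set, i.e.\ $\mathcal T_{h_{\mathfrak t_h}}=\mathcal T_h$; but $\mathcal T_{h_{\mathfrak t_h}}=\mathcal T_{\mathfrak t_h}=\mathcal T_h$ by construction. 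Order preservation of the inverse is clear since $\mathcal T\supseteq\mathcal T'$ gives $h_\mathfrak t\prec h_{\mathfrak t'}$.

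\textbf{Main obstacle.} The genuinely delicate point is verifying that $\mathcal T$ being the torsion class of a \emph{hereditary} torsion theory supplies exactly axiom (A2) for $h_\mathfrak t$ — in particular the subobject-closure half of (A2), which is where hereditariness is used, and the care needed to phrase (A2) in terms of \emph{every} exact sequence through $M$ rather than a single one. The rest is bookkeeping, but one should be slightly attentive that the correspondence between hereditary torsion theories and their torsion classes (invoked to show $\mathfrak t_{h_\mathfrak t}=\mathfrak t$) is precisely the one recorded after Definition \ref{tr-closure}, so no circularity arises.
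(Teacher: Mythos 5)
Your proposal is correct and follows essentially the same route as the paper: reduce the first assertion to Theorem \ref{TT}, and for the bijection construct from a hereditary torsion theory $(\mathcal T,\mathcal F)$ the binary function with zero set $\mathcal T$, verifying (A1), (A2), (A3) from closure under isomorphisms, subobjects/quotients/extensions, and coproducts respectively. The only cosmetic differences are that you recover $\mathcal F_{h_\mathfrak t}=\mathcal F$ from the fact that a torsion theory is determined by its torsion class (the paper checks $\mathcal F=\mathcal F_h$ directly via the canonical exact sequence $0\to T\to M\to F\to 0$), and you make injectivity explicit by checking the composite $h\mapsto\mathfrak t_h\mapsto h_{\mathfrak t_h}=h$ for binary $h$, which the paper leaves implicit.
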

\begin{proof}
In view of Corollary \ref{Ph=Phb}, $\mathfrak t_h$ coincides with the torsion theory $\mathfrak t_{h^b}$ generated by the binary hull $h^b$ of $h$.
So it remains to define, for a given hereditary torsion theory ${\mathfrak t}= ({\mathcal T}, {\mathcal F})$, a binary entropy function $h$ of $\mathfrak M$ such that $\mathfrak t=\mathfrak t_h$. 
For $M\in\mathfrak M$ let $$h(M)=\begin{cases}0 & \text{if and only if}\ M\in\mathcal T,\ \text{and}\\ \infty & \text{otherwise}.\end{cases}$$ We check that $h$ is a
binary entropy function and that $\mathfrak t=\mathfrak t_h$.
The axiom (A1) is satisfied by $h$ since $\mathcal T$ and $\mathcal F$ are stable under isomorphisms. Consider now (A2); if for some $M\in\mathfrak M$, $h(M)=0$ and $N \subseteq M$, then $M\in\mathcal T$, which is closed under quotients and subobjects, so that $h(N)=0=h(M/N)$. Viceversa, if $h(N)=0=h(M/N)$ for some $M\in\mathfrak M$ and $N\subseteq M$, then $N\in\mathcal T$ and $M/N\in\mathcal T$; since $\mathcal T$ is closed under extensions, also $M\in\mathcal T$, that is, $h(M)=0$. Finally (A3) holds for $h$, since $\mathcal T$ is closed under coproducts. Then $h$ is a binary entropy function of $\mathfrak M$. By the definition of $h$ we have immediately $\mathcal T=\mathcal T_h$. It remains to show that $\mathcal F=\mathcal F_h$. For every $M\in\mathfrak M$ there exists $T\in\mathcal T$ such that $M/T\in\mathcal F$; moreover, $M\in\mathcal F$ if and only if $T=0$. This occurs if and only if $h(N)=\infty$ for every non-zero subobject $N$ of $M$, that is, $h(M)>\!\!>0$, i.e., $M\in\mathcal F_h$. Hence $\mathcal F=\mathcal F_h$ and we have proved that $\mathfrak t=\mathfrak t_h$.
Finally, it is clear that the correspondence preserves the order.
\end{proof}

This theorem shows that hereditary torsion theories in $\mathfrak M$ are nothing else but binary entropy functions of $\mathfrak M$.

\begin{remark}\label{minimal} 
The set of axioms (A1), (A2), (A3) used in Definition \ref{h-def} to define an entropy function $h:\mathfrak M\to \R_+\cup\{\infty\}$ is minimal in order to prove Theorem \ref{tt}. Indeed, first of all it is natural to impose (A1) since the classes $\mathcal T_h$ and $\mathcal F_h$ are stable under isomorphisms. Moreover, we want $\mathcal T_h$ to be a torsion class of a hereditary torsion theory. Since $\mathcal T_h$ has to be closed under subobjects, quotients and extensions, hence we have to impose (A2). Finally, $\mathcal T_h$ has to be closed under coproducts, that is, $h$ satisfies (A3).
\end{remark}


\section{Entropy functions of algebraic flows}\label{ef-af-sec}

\subsection{The category of flows}\label{flow-sec}

In this section we recall the category of flows for a given arbitrary category $\mathfrak X$.

\smallskip
We introduce the category $\af_{\mathfrak X}$ of flows of $\mathfrak X$ first as a subcategory of the arrows category $\mathfrak X^2$.
Recall that for a category $\mathfrak X$ the category of arrows $\mathfrak X^2$ is isomorphic to a special \emph{comma category}, namely $(\mathfrak X\downarrow\mathfrak X)$. The objects of this category $\mathfrak X^2$ are triples $(X,Y,f)$, where $X,Y$ are objects of $X$ and $f: X \to Y$ is a morphism in ${\mathfrak X}$. The morphisms between two objects $(X_1,Y_1,f_1)$ and $(X_2,Y_2,f_2)$ of $\mathfrak X^2$ are pairs $(u,v)$ of morphisms $u: X_1\to X_2$ and $v: Y_1 \to Y_2$ in $\mathfrak X$ such that the diagram
\begin{equation}\label{diagram-1}
\begin{CD}
X_1 @>f_1>> Y_1\\ 
@V u VV  @VV v V\\ 
X_2 @>>f_2> Y_2
\end{CD}
\end{equation}
commutes. 

To describe $\af_{\mathfrak X}$ we impose now two restrictions. First, 
we consider special objects of the arrows category $\mathfrak X^2$ of $\mathfrak X$, namely, we take endomorphisms in $\mathfrak X$ instead of all morphisms:  

\begin{definition}
A \emph{flow} in ${\mathfrak X}$ is a pair $(X,\phi)$, where $X$ is an object in $\mathfrak X$ and $\phi: X \to X$ an endomorphism in ${\mathfrak X}$. 
\end{definition} 

So the category $\af_{\mathfrak X}$ has as objects all flows in ${\mathfrak X}$. 
Second, $\af_{\mathfrak X}$ will not be a full subcategory of $\mathfrak X^2$, since we shall take as morphisms in $\af_{\mathfrak X}$ only those pairs $(u,v)$ in \eqref{diagram-1} with $u=v$. Namely, a morphism in $\af_{\mathfrak X}$ between two flows $(X,\phi)$ and $(Y,\psi)$ is a morphism $u: X \to Y$ in ${\mathfrak X}$ such that the diagram
\begin{equation}\label{casc-mor}
\begin{CD}
X @>\phi>> X\\ 
@V u VV  @VV u V\\ 
Y @>>\psi> Y
\end{CD}
\end{equation}
in ${\mathfrak X}$ commutes. Two flows $(X,\phi)$ and $(Y,\psi)$ are isomorphic in $\af_{\mathfrak X}$ if the morphism $u: X \to Y$ in \eqref{casc-mor}
is an isomorphism in ${\mathfrak X}$. 

\medskip
Actually, $\af_\mathfrak X$ is isomorphic to a functor category. Indeed, consider the monoid $\N$ as a one-object category and the functor category $\mathbf{Fun}(\N,\mathfrak X)$. Since $\N$ has one object $Z$ and the morphisms of this object are the free monoid $\N$ generated by $1$, every $F\in\mathbf{Fun}(\N,\mathfrak X)$ is determined by $F(Z)$ and $F(1)$; and if $F,G\in\mathbf{Fun}(\N,\mathfrak X)$, a morphism from $F$ to $G$ is given by the natural transformation $\{u\}$, where $u:F(Z)\to G(Z)$ is a morphism such that $u\circ F(1)=G(1)\circ u$.

\medskip
At this point the functor $I:\af_\mathfrak X\to \mathbf{Fun}(\N,\mathfrak X)$, which associates to a pair $(X,\phi)\in\af_\mathfrak X$ the functor $F_{(X,\phi)}:\N\to \mathfrak X$ such that $F_{(X,\phi)}(Z)=X$ and $F_{(X,\phi)}(1)=\phi$ and to a morphism $u:(X,\phi)\to (Y,\psi)$ the natural transformation $\{u\}$ from $F_{(X,\phi)}$ to $F_{(Y,\psi)}$, is an isomorphism of categories.

\smallskip
It is known that, the category $\af_\mathfrak X$ is abelian, if $\mathfrak X$ is abelian.

\begin{remark}
When $\mathfrak X$ is a concrete category, every flow $(X,\phi)$ in $\mathfrak X$ gives a  semigroup action of $\N\cong \{\phi^n:n\in \N\}$ on $X$ (and viceversa, a semigroup action $\alpha: \N\times X\to X$ of $\N$ on $X$ via endomorphisms of $X$ defines a flow $(X,\alpha(1,-))$). In case $\phi: X \to X$ is an automorphism, this action becomes a group action of $\Z\cong  \{\phi^n:n\in \Z\}$. 
\end{remark}

\subsection{Algebraic flows and module categories}

Let $R$ be a ring. As said in the introduction we abbreviate $\af_{\mod_R}$ simply to $\af_R $, and we call algebraic flow an element $(M,\phi)$ of $\af_R$. 

An algebraic flow $(M,\phi)$ in $\af_R$ can be completely ``encoded'' via a structure of a $R[t]$-module of $M$. This gives the following theorem, that in particular shows that $\af_R$ is an abelian category.
 
\begin{theorem}\label{iso}
Let $R$ be a ring. Then $\af_R$ and $\mod_{R[t]}$ are isomorphic categories.
\end{theorem}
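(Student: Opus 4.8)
The plan is to construct an explicit functor $\Phi:\af_R\to\mod_{R[t]}$, an explicit functor $\Psi:\mod_{R[t]}\to\af_R$, and check that they are mutually inverse. On objects, given an algebraic flow $(M,\phi)$ with $M\in\mod_R$ and $\phi\in\End(M)$, I would define an $R[t]$-module structure on the underlying abelian group of $M$ by retaining the $R$-action and letting $t$ act as $\phi$; since $R$ acts $R$-linearly on $M$ and $\phi$ is $R$-linear, the actions of $R$ and of $t$ together extend uniquely (and consistently) to an action of the polynomial ring $R[t]$, i.e.\ $\left(\sum_i r_it^i\right)\cdot m=\sum_i r_i\phi^i(m)$. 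Conversely, given an $R[t]$-module $N$, restricting scalars along $R\hookrightarrow R[t]$ gives an $R$-module, and multiplication by $t$ is an $R$-endomorphism $\phi_t:N\to N$, so $(N,\phi_t)$ is an algebraic flow. These two assignments are visibly inverse to each other on objects.

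The second step is to treat morphisms. A morphism $u:(M,\phi)\to(M',\psi)$ in $\af_R$ is an $R$-linear map $u:M\to M'$ making the square \eqref{casc-mor} commute, i.e.\ $u\circ\phi=\psi\circ u$. I would show this is exactly the condition that $u$ be $R[t]$-linear for the module structures just defined: $u$ commutes with the $R$-action by hypothesis and, since $u\phi=\psi u$ implies $u\phi^i=\psi^i u$ for all $i$ by an immediate induction, $u$ commutes with the action of every polynomial in $t$. Conversely an $R[t]$-linear map between $R[t]$-modules is in particular $R$-linear and commutes with multiplication by $t$, which is precisely the square \eqref{casc-mor}. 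Hence $\Phi$ and $\Psi$ are well-defined on morphisms, and they clearly preserve identities and composition (they do not alter underlying maps), so they are functors; and since they are the identity on underlying maps and mutually inverse on objects, $\Phi\circ\Psi$ and $\Psi\circ\Phi$ are the respective identity functors, giving an isomorphism (not merely equivalence) of categories.

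There is essentially no hard analytic content here; the one point that deserves a word of care is the verification that the prescription "$R$ acts as before, $t$ acts as $\phi$" really does define an associative, unital $R[t]$-module action — equivalently, that there are no hidden relations to check beyond $R$-linearity of $\phi$. This is the universal property of the polynomial ring: $R[t]$ is the free $R$-algebra on one generator, so an $R$-algebra map $R[t]\to\End_{\Z}(M)$ (landing in, and factoring through, $\End_R(M)$ once we know $\phi$ is $R$-linear and central relative to the $R$-action) is the same as a choice of one element of $\End_R(M)$, namely the image of $t$. Invoking this makes the object-level correspondence automatic; the morphism-level correspondence then follows from the induction $u\phi=\psi u\Rightarrow u\phi^i=\psi^i u$ noted above. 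So the "main obstacle", such as it is, is simply organizing these bookkeeping verifications cleanly, and one may reasonably abbreviate them.

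An equivalent and perhaps more elegant route, if one prefers to avoid even this mild bookkeeping, is to combine the isomorphism $\af_R\cong\mathbf{Fun}(\N,\mod_R)$ established just above (viewing $\N$ as a one-object category, so that a functor is an object together with an endomorphism) with the standard fact that $\mathbf{Fun}(\N,\mod_R)\cong\mod_{R[\N]}=\mod_{R[t]}$, since the monoid algebra $R[\N]$ of the free monoid on one generator is exactly $R[t]$. I would likely present the direct construction for concreteness but mention this conceptual reformulation.
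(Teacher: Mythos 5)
Your construction is correct and is essentially the paper's own proof: the paper defines $F(M,\phi)=M_\phi$ with $tm=\phi(m)$ and $G(M)=(M,\mu_M)$ with $\mu_M(m)=tm$, exactly your $\Phi$ and $\Psi$, and checks the same morphism condition $u\circ\phi=\psi\circ u$. Your extra remarks (the universal property of $R[t]$ and the detour via $\mathbf{Fun}(\N,\mod_R)$) are fine but not needed.
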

\begin{proof}
Let 
\begin{equation}\label{F}
F:\af_R\to\bim
\end{equation}
 be defined in the following way.
If $(M,\phi)\in\af_R$, define an $R[t]$-module structure on $M$ by letting $tm=\phi(m)$ for every $m\in M$. This can be extended to multiplication by arbitrary polynomials in obvious way. We denote $M$ with this structure by $M_\phi$. Let $F(M,\phi)=M_\phi$. If $u:(M,\phi)\to (N,\psi)$ is a morphism in $\af_R$, let $F(u)=u:M_\phi\to N_\psi$. Then $u: M_\phi\to N_\psi$ is a morphism of $R[t]$-modules. 
It is easy to see that $F$ is a functor.

Viceversa, let 
\begin{equation}
G:\bim\to \af_R
\end{equation}
 be defined in the following way. Every $R[t]$-module $M$ gives rise to an algebraic flow $(M,\mu_M)$, where $\mu_M(m)=t m$ for every $m\in M$. So let $G(M)=(M,\mu_M)$. Moreover, if $u:M\to N$ is a morphism in $\bim$, let $G(u)=u:(M,\mu_M)\to (N,\mu_N)$. This is a morphism in $\af_R$, since $u\circ\mu_M=\mu_N\circ u$. 
It is easy to see that $G$ is a functor and that $F$ and $G$ give an isomorphism between $\af_R$ and $\bim$.
\end{proof}

This isomorphism is very convenient since it allows us to replace $\af_R$ by a module category, namely, $\mod_{R[t]}$. 
 
\medskip
Theorem \ref{hb<->tt} shows that there are at least as many entropy functions as hereditary torsion theories. Since our aim is to concentrate mainly on those entropy functions having a ``reasonable behavior'' from a dynamical point of view, we shall impose some further restrictions in the form of axioms in addition to those of the general Definition \ref{h-def}. This is the aim of the next section, in which we concentrate on the particular case $\mathfrak M=\mod_A$, where $A$ is ring.

\subsection{Adding some axioms}\label{ax-sec}

The known entropy functions $h\in \mathfrak H(\mod_A)$ satisfy a stronger form of (A2), namely,

\begin{itemize}
\item[(A2$^*$)] $h(M)=h(N)+h(M/N)$, for every $M\in\mod_A$ and every submodule $N$ of $M$. 
\end{itemize}

Following the standard terminology in module theory, we call an entropy function satisfying (A2$^*$) {\em additive}. 
In case $A$ is an integral domain, (A2$^*$) has the following consequences.

\begin{lemma}\label{hI=hA}
Let $A$ be an integral domain and $h\in\H(\mod_A)$. Then $h(I)=h(A)$ for every non-zero ideal of $A$.
\end{lemma}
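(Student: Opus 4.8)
The plan is to exploit additivity (A2$^*$) together with axiom (A1) and the fact that $A$ is an integral domain. Let $I$ be a non-zero ideal of $A$. The idea is to find a short exact sequence relating $I$ and $A$ in which the ``error term'' has zero entropy, and then conclude via (A2$^*$).

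First I would pick a non-zero element $a \in I$, giving the principal ideal $Aa \subseteq I \subseteq A$. Since $A$ is an integral domain, the map $A \to Aa$ sending $x \mapsto xa$ is an isomorphism of $A$-modules (injectivity uses that $A$ has no zero divisors and $a \neq 0$). Hence by (A1) we get $h(Aa) = h(A)$. Applying (A2$^*$) to the submodule $Aa \subseteq A$ gives $h(A) = h(Aa) + h(A/Aa) = h(A) + h(A/Aa)$, and applying it to the submodule $Aa \subseteq I$ gives $h(I) = h(Aa) + h(I/Aa) = h(A) + h(I/Aa)$. From the first equation, if $h(A) < \infty$ we deduce $h(A/Aa) = 0$; since $I/Aa$ is a submodule of $A/Aa$, axiom (A2) yields $h(I/Aa) = 0$, and then the second equation gives $h(I) = h(A)$. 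If instead $h(A) = \infty$, then both displayed equations already force $h(I) = \infty = h(A)$ directly (using the convention $x + \infty = \infty$), so the conclusion holds in that case too.

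Actually, one can avoid the case split by arguing uniformly: from $h(A) = h(A) + h(A/Aa)$ one cannot in general cancel $h(A)$, so the cleanest route is to instead use $A/Aa \supseteq I/Aa$ together with $h(A) = h(A/Aa) + h(Aa) = h(A/Aa) + h(A)$. Hmm — the cancellation issue is exactly why I would prefer to route through the inclusion $I/Aa \hookrightarrow A/Aa$ and (A2). In detail: apply (A2$^*$) to $I \subseteq A$ to get $h(A) = h(I) + h(A/I)$; apply it to $Aa \subseteq I$ to get $h(I) = h(Aa) + h(I/Aa) = h(A) + h(I/Aa)$. Combining, $h(A) = h(A) + h(I/Aa) + h(A/I)$. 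When $h(A)$ is finite this forces $h(I/Aa) = 0$ and hence $h(I) = h(A)$; when $h(A) = \infty$, the equation $h(I) = h(A) + h(I/Aa)$ gives $h(I) = \infty$ immediately.

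The main (and really only) obstacle is the non-cancellativity of $\infty$ in $\R_+ \cup \{\infty\}$, which is why the argument must be phrased so that the finite and infinite cases are either handled separately or, better, so that the identity $h(I) = h(A) + h(I/Aa)$ is derived first and the conclusion read off from it regardless of finiteness. Everything else — the module isomorphism $A \cong Aa$, the use of (A1), (A2), and (A2$^*$) — is routine.
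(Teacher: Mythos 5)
Your proof is correct and rests on the same ingredients as the paper's: pick a non-zero $a\in I$, use $Aa\cong A$ with (A1), and invoke the additivity axiom (A2$^*$). The paper's proof is just a streamlined version of yours: since (A2$^*$) gives monotonicity $h(N)\leq h(M)$ for $N\subseteq M$, one gets the sandwich $h(A)=h(Aa)\leq h(I)\leq h(A)$ at once, with no case split on whether $h(A)$ is finite.
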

\begin{proof}
Pick any non-zero $a\in I$. Since $Aa\cong A$ and $Aa\subseteq I$, we have the inequalities $h(A) = h(aA) \leq h(I) \leq h(A)$, hence $h(I)=h(A)$.
\end{proof}

\begin{proposition}\label{i(R)_fin} 
Let $A$ be an integral domain and $h\in \H(\mod_A)$. If $h$ is additive and $0 <h(A) < \infty$, then $h(t(M)) = 0$; consequently $h(M) = h(M/t(M))$ for every $M \in \mod_A$.  
\end{proposition}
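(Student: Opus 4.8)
The plan is to reduce the claim about the whole torsion submodule $t(M)$ to the case of a \emph{cyclic} torsion module, and then handle the cyclic case by a direct argument using additivity together with Lemma \ref{hI=hA}. First I would observe that, by Remark \ref{A3d}, the axiom (A3) lets us test $h$ on finitely generated submodules: since $t(M)$ is the direct limit of its finitely generated submodules and each such submodule is again a torsion module, it suffices to show $h(N)=0$ for every finitely generated torsion submodule $N$ of $M$. A finitely generated torsion module over the integral domain $A$ admits a finite chain of submodules with cyclic torsion quotients $A/I_k$, $I_k\neq 0$; by additivity (A2$^*$), $h(N)=\sum_k h(A/I_k)$, so the whole problem comes down to showing $h(A/I)=0$ for every non-zero ideal $I$ of $A$.

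For the cyclic case, apply (A2$^*$) to the short exact sequence $0\to I\to A\to A/I\to 0$: this gives $h(A)=h(I)+h(A/I)$. By Lemma \ref{hI=hA}, $h(I)=h(A)$, and since $0<h(A)<\infty$ we may cancel the finite quantity $h(A)$ from both sides to conclude $h(A/I)=0$. (The finiteness hypothesis $h(A)<\infty$ is exactly what is needed here: cancellation in $\R_+\cup\{\infty\}$ is only legitimate for finite values, and indeed one expects counterexamples when $h(A)=\infty$.) Combining this with the previous paragraph yields $h(t(M))=0$. Finally, the ``consequently'' clause is immediate: apply (A2$^*$) to $0\to t(M)\to M\to M/t(M)\to 0$ to get $h(M)=h(t(M))+h(M/t(M))=h(M/t(M))$.

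The main obstacle I anticipate is the dévissage step for finitely generated torsion modules — producing the composition-like series with cyclic quotients $A/I_k$ with $I_k\neq 0$ over a general (not necessarily Noetherian) integral domain. For a finitely generated module this is straightforward by induction on the number of generators: if $N=\sum_{i=1}^n A x_i$ with each $x_i$ torsion, then $Ax_1\cong A/\mathrm{Ann}(x_1)$ with $\mathrm{Ann}(x_1)\neq 0$, and $N/Ax_1$ is generated by $n-1$ torsion elements, so one finishes by the inductive hypothesis and additivity. One should take momentary care that ``torsion'' is inherited correctly by submodules and quotients in this process, but over a domain this is routine. Everything else is a bookkeeping exercise with (A2$^*$), Lemma \ref{hI=hA}, and Remark \ref{A3d}.
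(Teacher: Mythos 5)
Your argument is correct, and its core coincides with the paper's: you prove $h(A/I)=0$ for every non-zero ideal $I$ exactly as the paper does, by applying (A2$^*$) to $0\to I\to A\to A/I\to 0$, invoking Lemma \ref{hI=hA}, and cancelling the finite value $h(A)$; the final clause $h(M)=h(M/t(M))$ is also handled identically. Where you diverge is the passage from the cyclic case to $h(t(M))=0$: you reduce to finitely generated torsion submodules via Remark \ref{A3d} and then run a d\'evissage by induction on the number of generators, using additivity along a filtration with cyclic torsion quotients. The paper takes a shorter path that makes this d\'evissage (the step you flag as the main obstacle) unnecessary: for each $x\in t(M)$ one has $Ax\cong A/\mathrm{ann}_A(x)$ with $\mathrm{ann}_A(x)\neq 0$, hence $h(Ax)=0$, and since $t(M)=\sum_{x\in t(M)}Ax$ is a join of subobjects of entropy zero, Discussion \ref{A3}(c) (the join form of (A3)) gives $h(t(M))=0$ directly, with no filtration and no induction. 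Both routes use (A3) in some guise, so nothing is gained in generality either way; your version just does a little more work than needed, and the paper's shortcut is worth internalizing since the ``sum of zero-entropy subobjects has zero entropy'' principle is used repeatedly elsewhere (e.g.\ in Lemma \ref{hP=0}).
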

\begin{proof}
Let $r= h(A)$ and let $I$ be a non-zero ideal of $A$. Then $h(I)=r$ by Lemma \ref{hI=hA}. 
From (A2$^*$) applied to $A$ and $I$ we deduce $r= r + h(A/I)$, so $h(A/I)=0$. 
Now take any $M \in \mod_A$.  If $x\in t(M)$, then $I = \mathrm{ann}_A(x)$ is a non-zero ideal of $A$, therefore $Ax \cong A/I$ and consequently $h(Ax) = h(A/I)=  0$. This easily gives  $h(t(M)) = 0$. Another application of  (A2$^*$) yields $h(M) = h(M/t(M))$.
\end{proof}

Let $A$ be an integral domain and let $h\in \H(\mod_A)$ be additive with $h(A) = \infty$. Call an ideal $I$ of $A$ {\em $h$-large}, if $h(A/I)< \infty$, denote by 
 $\mathcal J_h(A)$ the family  of all {\em $h$-large} ideals of $A$. Let now $r_I = h(A/I)$ for  $I\in \mathcal J_h(A)$. 
 
\begin{proposition}\label{Impose}
Let $A$ be an integral domain and $h\in \H(\mod_A)$. If $A$ is additive and $h(A)= \infty$, then:
\begin{itemize}
\item[(a)] $\mathcal J_h(A)$ is closed under taking finite intersections and larger ideals; therefore, $\mathcal J_h(A)$ is the local base of a ring topology $\tau_h$ on $A$;
\item[(b)] $\tau_h$ is indiscrete $($i.e., $\mathcal J_h(A)$ coincides with $\{A\})$ exactly when $h(M) =\infty$ for every non-zero $A$-module $M$; 
\item[(c)] $r_I = h(J/I) + r_J$ when $I\subseteq J$ in $\mathcal J_h(A)$;
\item[(d)] $r_{I\cap J} = r_I + r_J$ when $I,J\in \mathcal J_h(A)$ are coprime (i.e., $I+ J =A$);
\item[(e)] if $A$ is a PID, then  $r_{Abc}=r_{Ab}+r_{Ac}$ for $Ab ,Ac\in \mathcal J_h(A)$.  
\end{itemize}
 \end{proposition}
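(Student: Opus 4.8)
The plan is to prove the five items by exploiting the additivity axiom (A2$^*$) together with Lemma~\ref{hI=hA}, which already tells us that $h(I)=h(A)=\infty$ for every non-zero ideal $I$ of $A$, so the finiteness of $h(A/I)$ is the only nontrivial condition distinguishing $h$-large ideals. The main tool throughout is the following immediate consequence of (A2$^*$): for a chain $I\subseteq J$ of ideals, applying additivity to $A/I$ with submodule $J/I$ gives $h(A/I)=h(J/I)+h(A/J)$; and applying it to $J$ with submodule $I$ gives $h(J)=h(I)+h(J/I)$, whence $h(J/I)=h(J)-h(I)$ is well defined (as a difference of extended reals) precisely when at least one of $h(I),h(J)$ is finite — but by Lemma~\ref{hI=hA} both equal $\infty$, so instead we read it off from the quotient identity. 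I will use $h(J/I)$ via the quotient, i.e. as $h(A/I)-h(A/J)$ when the right side makes sense.

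For (a): if $I\subseteq J$ and $I$ is $h$-large, then from $h(A/I)=h(J/I)+h(A/J)$ and $h(A/I)<\infty$ we get $h(A/J)<\infty$, so $J$ is $h$-large (larger ideals). For finite intersections it suffices to treat two ideals $I,J\in\mathcal J_h(A)$: the module $A/(I\cap J)$ embeds into $A/I\oplus A/J$ via the diagonal map, so by (A2) (or by the additivity applied to this embedding and its cokernel) $h(A/(I\cap J))\le h(A/I\oplus A/J)=h(A/I)+h(A/J)<\infty$ — here I use Discussion~\ref{A3}(a) in its quantitative additive form, which follows from (A2$^*$) by induction. Hence $I\cap J\in\mathcal J_h(A)$. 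Since $\mathcal J_h(A)$ is a downward-directed (under the filter generated by finite intersections) family of ideals closed upward, it is the neighbourhood base at $0$ of a (linear) ring topology $\tau_h$ on $A$; the ring-topology axioms for a base of ideals are routine.

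For (b): $\tau_h$ is indiscrete iff $\mathcal J_h(A)=\{A\}$ iff $h(A/I)=\infty$ for every non-zero ideal $I$. I must upgrade this from cyclic quotients to all modules. One direction is trivial. For the other, suppose $h(A/I)=\infty$ for all non-zero $I$, and let $M\ne 0$; pick $0\ne x\in M$. If $\mathrm{ann}_A(x)=0$ then $Ax\cong A$ and $h(Ax)=h(A)=\infty$, so $h(M)\ge h(Ax)=\infty$ by (A2$^*$) (since $h(M)=h(Ax)+h(M/Ax)$); if $\mathrm{ann}_A(x)=I\ne 0$ then $Ax\cong A/I$, so $h(Ax)=\infty$ by hypothesis, and again $h(M)=\infty$. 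For (c): this is exactly the identity $h(A/I)=h(J/I)+h(A/J)$ derived above, i.e. $r_I=h(J/I)+r_J$ for $I\subseteq J$ in $\mathcal J_h(A)$ — note $h(J/I)<\infty$ automatically since $r_I<\infty$. For (d): when $I+J=A$, the Chinese Remainder isomorphism $A/(I\cap J)\cong A/I\oplus A/J$ together with Discussion~\ref{A3}(a) in additive form gives $r_{I\cap J}=h(A/I)+h(A/J)=r_I+r_J$. For (e): in a PID, coprimality of $Ab$ and $Ac$ is not assumed, so I argue directly: factor and use (c) along the chain $Abc\subseteq Ab\subseteq A$ together with the isomorphism $Ab/Abc\cong A/Ac$ (multiplication by $b$ is an isomorphism $A/Ac\to Ab/Abc$, using that $A$ is a domain), which gives $h(Ab/Abc)=h(A/Ac)=r_{Ac}$; then $r_{Abc}=h(Ab/Abc)+r_{Ab}=r_{Ac}+r_{Ab}$.

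The main obstacle is item (b): everything else is bookkeeping with additivity, but (b) requires passing from the hypothesis about cyclic quotients $A/I$ to arbitrary modules, and the subtlety is that a general module need not be a directed union or extension of cyclic submodules with controlled entropy — however, the escape is that we only need the \emph{implication} "$h(A/I)=\infty$ for all $I\neq0$ $\Rightarrow$ $h(M)=\infty$ for all $M\neq 0$", and for that a single well-chosen nonzero cyclic submodule $Ax\subseteq M$ suffices, because (A2$^*$) forces $h(M)\ge h(Ax)$. Care is also needed in (e) to check that multiplication by $b$ genuinely induces an isomorphism $A/Ac\cong Ab/Abc$, which uses that $A$ is an integral domain (injectivity) and is surjective by construction.
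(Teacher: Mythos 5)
Your proposal is correct and follows essentially the same route as the paper: additivity (A2$^*$) along $I\subseteq J\subseteq A$ for (a), (c) and (e) (with $Ab/Abc\cong A/Ac$ in the PID case), the embedding $A/(I\cap J)\hookrightarrow A/I\oplus A/J$ for intersections, and the Chinese Remainder isomorphism for (d); your cyclic-submodule argument for (b), which the paper dismisses as obvious, is a correct filling-in of that step via $h(M)\geq h(Ax)$. The only cosmetic point is that in (b) the relevant condition is $h(A/I)=\infty$ for every \emph{proper} (possibly zero) ideal $I$, though the zero ideal is automatic since $h(A)=\infty$, so your argument is unaffected.
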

\begin{proof} 
(a) Follows from $A/I \cap J \hookrightarrow A/I \times A/J$ for  $I,J\in \mathcal J_h(A)$ and the monotonicity of $h$ under taking quotients.

\smallskip
(b) Is obvious, and (c) and (d) follows from the additivity of $h$ and the isomorphism $A/I \cap J \cong A/I \times A/J$ for coprime ideals $I,J$ of $A$. 

\smallskip
(e) If  $I\subseteq J$ are ideals in $\mathcal J_h(A)$, then they have the form $J=Ab$ and $I = Jc = Abc$, so $J/I \cong A/Ac$. This gives $r_{Abc}=r_{Ab}+r_{Ac}$. 
\end{proof}

It follows from item (e) that the numbers $r_{Ap}$ for prime elements $p\in A$ determine all others. 

\medskip
In the sequel we consider only entropy functions $h$ of $\af_R$ (i.e., $A = {R[t]}$). Because of the importance of the additivity axiom (A2$^*$), we reformulate it in this specific case: 
 
\begin{itemize}
\item[(A2$^*$)] $h(\phi)=h(\phi\restriction_N)+h(\overline\phi)$, for every $(M,\phi)\in\af_R$ and every $\phi$-invariant submodule $N$ of $M$, with $\overline\phi:M/N\to M/N$ the endomorphism induced by $\phi$.
\end{itemize}

In this form it is known also as {\em Addition Theorem} --- see Theorem \ref{AT} for the algebraic entropy $h_a$, Fact \ref{properties-ent}(e) for the algebraic entropy $\ent$ and Theorem \ref{AT-i} for the $i$-entropy.

\medskip
A starting axiom that permits to avoid the pathological case of $h\equiv\infty$, is the following one, which intuitively has to hold for a reasonable entropy function $h\in\H(\af_R)$:
\begin{itemize}
\item[(A0)] $h(0_M)=0$ and $h(1_M)=0$ for every $M\in\mod_R$.
\end{itemize}
We split (A0) in two parts in order to better deal with it:
\begin{itemize}
\item[(A0$_0$)] $h(0_M)=0$ for every $M\in\mod_R$;
\item[(A0$_1$)] $h(1_M)=0$ for every $M\in\mod_R$.
\end{itemize}

Observe that  in the presence of (A2) and (A3),  the axioms (A0$_0$) and (A0$_1$) follow respectively from the equalities $h(0_R)=0$ and $h(1_R)=0$, since every $R$-module is quotient of a free $R$-module.

\medskip
Compositions of endomorphisms are available in $\af_R$, so for every algebraic flow $(M,\phi) \in \af_R$ one can consider all powers $\f^n$ to get new algebraic flows $(M,\phi^n)\in\af_R$ with the same underlying $R$-module $M$. 
This gives the possibility to consider the following axiom for an entropy function $h$ of $\af_R$: 

\begin{itemize}
\item[(A4)] $h(\f)\leq h(\f^n) \leq nh(\f)$ for every $(M,\phi)\in\af_R$.
\end{itemize}
Most of the known specific examples of entropy functions satisfy this weak logarithmic law (see Fact \ref{properties}(b) for the algebraic entropy $h_a$, Fact \ref{properties-ent}(b) for the algebraic entropy $\ent$ and Fact \ref{properties-i}(b) for the $i$-entropy), or the (stronger) logarithmic law:
\begin{itemize}
\item[(A4$^*$)] $h(\f^n)=nh(\f)$ for every $(M,\phi)\in\af_R$.
\end{itemize}

\begin{remark}
Let $h\in\H(\af_R)$. In other terms (A4$^*$) says that, fixed $M\in\mod_R$, the restriction $h:(\End(M),\cdot)\to(\R_+\cup\{\infty\},+)$ is a semigroup homomorphism. Since every semigroup homomorphisms sends idempotents to idempotents, and the only idempotents of $(\R_+\cup\{\infty\},+)$ are $0$ and $\infty$, we have that idempotency of $\phi\in\End(M)$ implies that $h(\phi)$ is either $0$ or $\infty$.
\end{remark}

In particular we have the following obvious

\begin{lemma}\label{0-infty}
Let $h\in\mathfrak H(\af_R)$. If $h$ satisfies \emph{(A4$^*$)}, then $h(0_M)$ and $h(1_M)$ are either $0$ or $\infty$ for every $M\in\mod_R$.
\end{lemma}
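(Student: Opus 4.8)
The plan is to exploit the remark immediately preceding the statement, which reformulates (A4$^*$) as the assertion that, for fixed $M\in\mod_R$, the map $h:(\End(M),\cdot)\to(\R_+\cup\{\infty\},+)$ is a semigroup homomorphism. Both $0_M$ and $1_M$ are idempotent elements of the multiplicative semigroup $(\End(M),\cdot)$: indeed $0_M\circ 0_M=0_M$ and $1_M\circ 1_M=1_M$. Hence their images under $h$ must be idempotent in $(\R_+\cup\{\infty\},+)$.

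So the first step is to observe that the only idempotents of the monoid $(\R_+\cup\{\infty\},+)$ are $0$ and $\infty$: if $x+x=x$ with $x\in\R_+$ then $x=0$, and $\infty+\infty=\infty$, while no other value satisfies $x+x=x$. This is an entirely elementary computation and requires nothing beyond the conventions on $\infty$ already fixed in the paper.

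The second step is simply to feed $n=2$ into (A4$^*$) for the two specific flows $(M,0_M)$ and $(M,1_M)$: since $0_M^2=0_M$ we get $h(0_M)=h(0_M^2)=2h(0_M)$, and since $1_M^2=1_M$ we get $h(1_M)=h(1_M^2)=2h(1_M)$. Combining with the first step, each of $h(0_M)$ and $h(1_M)$ lies in $\{0,\infty\}$, which is exactly the claim.

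There is no real obstacle here; the lemma is, as the authors say, "obvious", and the only thing worth being careful about is not to confuse it with the stronger (A0), which would force the values to be $0$ rather than merely $0$ or $\infty$ — that stronger conclusion genuinely needs more than (A4$^*$) and is not asserted. One may also note in passing that the argument applies verbatim to any idempotent endomorphism $e\in\End(M)$ (i.e.\ $e^2=e$), giving $h(e)\in\{0,\infty\}$, and Lemma~\ref{0-infty} is just the special case $e\in\{0_M,1_M\}$.
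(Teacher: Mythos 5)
Your proof is correct and follows exactly the paper's route: the remark preceding the lemma already observes that (A4$^*$) makes $h$ a semigroup homomorphism $(\End(M),\cdot)\to(\R_+\cup\{\infty\},+)$, so the idempotents $0_M$ and $1_M$ must land on the idempotents $0$ or $\infty$ of the additive monoid. Your explicit computation $h(\phi)=h(\phi^2)=2h(\phi)$ for $\phi\in\{0_M,1_M\}$ is just this argument written out, so there is nothing to add.
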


Consider the functors 
\begin{itemize}
\item[(a)]$O:\mod_R\to \af_R\cong\mod_{R[t]}$ such that $M\mapsto (M,0_M)$, and 
\item[(b)] $I:\mod_R\to \af_R\cong\mod_{R[t]}$ such that $M\mapsto (M,1_M)$. 
\end{itemize}
Then both $O(\mod_R)$ and $I(\mod_R)$ are subcategories of $\af_R$ closed under quotients, direct sums and modules, and isomorphic to $\mod_R$. 

Let now $h\in\mathfrak H(\af_R)$. We can associate to this entropy function $h$ of $\af_R$, 
\begin{itemize}
\item[(a)] $h^O:\mod_R\to \R_+\cup\{\infty\}$ defined by $h^O(M)=h(0_M)\ \text{for every}\ M\in\mod_R$; and
\item[(b)]$h^I:\mod_R\to \R_+\cup\{\infty\}$ defined by $h^I(M)=h(1_M)\ \text{for every}\ M\in\mod_R$. 
\end{itemize}

\begin{remark}
Let $h\in\mathfrak H(\af_R)$.
\begin{itemize}
\item[(a)]If $h$ satisfies (A4$^*$), then $h^O\in\mathfrak H_b(\mod_R)$ and $h^I\in\mathfrak H_b(\mod_R)$.

In fact, $h^O$ and $h^I$ are entropy functions of $\mod_R$, since $h^O=h\restriction_{O(\mod_R)}$ and $h^I=h\restriction_{I(\mod_R)}$, where $O(\mod_R)$ and $I(\mod_R)$ in $\af_R$ are closed under quotients, direct sums and submodules, and are isomorphic to $\mod_R$. Moreover, in view of Lemma \ref{0-infty}, (A4$^*$) implies that $h(0_M)$ and $h(1_M)$ are either $0$ or $\infty$, and so $h^O$ and $h^I$ are binary. 

\item[(b)] Consequently, if $h$ satisfies (A4$^*$), then:
\begin{itemize}
\item[(b$_1$)]$h$ satisfies (A0$_0$) if and only if $h^O\equiv 0$;
\item[(b$_2$)]$h$ satisfies (A0$_1$) if and only if $h^I\equiv 0$.
\end{itemize}
In other words, $h^O$ and $h^I$ measure to what extent $h$ satisfies (A0$_0$) and (A0$_1$), respectively.
\end{itemize}
\end{remark}

In the next section we will see that (A0$_0$) does not imply (A0$_1$) and that (A0$_1$) does not imply (A0$_0$).

\begin{remark}
The abstractly defined binary hull $h^b$ of an entropy function $h$ of $\mathfrak M$ has a very natural intuitive construction in the case of algebraic flows. Indeed, from a given entropy function $h$ of $\af_R$, for every $n\in\N_+$  one can define a new entropy function $h_n$ of $\af_R$ letting
$h_n(M,\phi)= h(M,\phi^n)$ for every $(M,\phi) \in \af_R$ (one can define this also in the isomorphic situation $M_\phi\in\mod_{R[t]}$). 
\begin{itemize}
\item[(a)] If $h\in\HB(\af_R)$, then $h=nh$ for every $n\in\N_+$.

Indeed, according to Proposition \ref{bheq}, $h = nh$ for every $n\in\N_+$, since $h$ is binary.

\item[(b)] If $h\in\HB(\af_R)$ satisfies {(A4)}, then $h=h_n=nh$ for every $n\in\N_+$.

In fact, $h \prec h_n \prec nh\ \text{for all}\ n\in\N_+$, and so $h=h_n=nh$ for all $n\in\N_+$.
\item[(c)]
If $h\in\H(\af_R)$ satisfies {(A4)}, then $h^b=\sup_{n\in\N_+}h_n$.

To verify this statement, let $h'=\sup_{n\in\N_+}h_n$. Then $h'\in\HB(\af_R)$. Moreover, suppose that $h^*\in\HB(\af_R)$ and $h\prec h^*$. By item (b) we have $h_n \prec h_n^*= h^*$ for every $n\in\N_+$, so that also $h' \prec h^*$. This proves that $h'$ is the least binary entropy function above $h$, so $h'$ coincides with the binary hull $h^b$ of $h$.
\end{itemize}
\end{remark}

For $M\in\mod_R$, the \emph{right Bernoulli shift} is the algebraic flow
$$\beta_M:M^{(\N)}\to M^{(\N)}\ \text{defined by}\ (x_0,\ldots,x_n,\ldots)\mapsto(0,x_0,\ldots,x_n,\ldots).$$

\begin{example}\label{betaex}
\begin{itemize}
\item[(a)] The algebraic flow $(R^{(\N)},\beta_R)\in\af_R$ is mapped by the functor $F:\af_R\to\mod_{R[t]}$ of \eqref{F} to $R[t]$. 
\item[(b)] Let $I$ be an ideal of $R$. 
As in (a), $F$ sends $((R/I)^{(\N)}, \beta_{R/I})$ to $(R/I)[t]$. 
Observe that $(R/I)[t]$ is isomorphic to the $R[t]$-module $R[t]/J$, where $J=I[t]$ is the ideal generated by $I$ in $R[t]$. 
\end{itemize}
\end{example}

The following condition appears in the collections of axioms that guarantee uniqueness for the algebraic entropy $h_a$ of $\af_\abg$ and for the algebraic entropy $\ent$ of $\af_\mathbf{TorAbGrp}$: $h_a(\beta_{\Z(p)})=\ent(\beta_{\Z(p)})=\log p$ for every prime $p$ (see Theorem \ref{UT} and Fact \ref{properties-ent}(f) below, respectively). 

\medskip
Inspired by this condition, in order to reach uniqueness of an arbitrary entropy function $h\in\H(\af_R)$, one may want to add the following axiom:
\begin{itemize}
\item[(A5)] $h(\beta_{R/I})=r_{I}\in\R_+\cup\{\infty\}$, for every ideal $I$ of $R$,
\end{itemize}
with appropriate conditions on the $r_{I}$ (e.g., imposed by Proposition \ref{Impose} when $R$ is an integral domain).

For example, if $h$ is monotone under taking quotients and $I,J$ are ideals of $R$, then $r_I\geq r_J$ if $I\subseteq J$. Moreover,  if (A2$^*$) holds for $h$, then
$r_I = h(\beta_{J/I}) + r_J$ when $I\subseteq J$, while $r_{I\cap J} = r_I + r_J$ when $I,J$ are coprime, etc.
 
\medskip
Beyond the quotients of $R[t]$ with respect to ideals as in Example \ref{betaex}(b), one can take also principal ideals of $R[t]$ of the form $J = (f(t))$ for some $f(t)\in R[t]$. These ideals also provide data that may help to capture the entropy function, namely the values $y_f= h(R[t]/J)$. 
In particular, if the polynomial $f(t) = a_0 + a_1t + \ldots + a_{n-1}t^{n-1}+ t^n$ is monic, then $R[t]/J$ is isomorphic to $F(R^n,\phi)$ as $R[t]$-modules, where $\phi$ is the endomorphism of $R^n$ having as a matrix the companion matrix of $f(t)$.

\medskip
For a ring $A$ and $M\in\mod_A$, recall from Remark \ref{A3d} that $\mathcal F(M)$ denotes the family of all finitely generated submodules of $M$. An invariant $i:\mod_A\to \R_+\cup\{\infty\}$ is \emph{upper continuous} if $i(M)=\sup\{i(F):F\in\mathcal F(M)\}$ for every $M\in\mod_A$ \cite{NR,V}. As follows from Remark \ref{A3d}, this property implies (A3).

\begin{definition}\emph{\cite{NR}}\label{L-def}
An additive upper continuous invariant is called \emph{length function}.
\end{definition}

Since each length function obviously satisfies (A1), (A2) and (A3), it follows that length functions are entropy functions. It is not hard to see that a length function $h$ is completely determined by its values on cyclic modules $h(A/I)$. 
If one restricts to Noetherian commutative rings $A$ and the entropy function $h\in\mathfrak H(\mod_A)$ is a length function, to prove uniqueness of $h$ one can eventually use the following

\begin{fact}\label{V}\emph{\cite{V}}
If $A$ is a Noetherian commutative ring and $i,i'$ are length functions of $\mod_{A}$, then $i=i'$ if and only if $i(A/\mathfrak p)=i'(A/\mathfrak p)$ for every prime ideal $\mathfrak p$ of $A$.
\end{fact}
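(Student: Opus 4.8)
The plan is to prove the non-trivial implication, the converse being immediate, so assume $i(A/\mathfrak p)=i'(A/\mathfrak p)$ for every prime ideal $\mathfrak p$ of $A$. The argument rests on two inputs: the reduction to finitely generated modules afforded by upper continuity, and the existence of prime filtrations over a Noetherian ring.

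First I would reduce to the finitely generated case. Since $i$ and $i'$ are upper continuous, for every $M\in\mod_A$ we have $i(M)=\sup\{i(F):F\in\mathcal F(M)\}$ and $i'(M)=\sup\{i'(F):F\in\mathcal F(M)\}$, where $\mathcal F(M)$ is the family of finitely generated submodules of $M$. Hence it suffices to prove $i(F)=i'(F)$ for every finitely generated $A$-module $F$.

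Next comes the key structural step. Since $A$ is commutative Noetherian, every finitely generated $A$-module $F$ admits a finite filtration $0=F_0\subseteq F_1\subseteq\cdots\subseteq F_n=F$ with $F_k/F_{k-1}\cong A/\mathfrak p_k$ for suitable prime ideals $\mathfrak p_k$ of $A$ (the classical prime-filtration theorem, obtained by repeatedly choosing an associated prime of the current quotient). Applying the additivity axiom (A2$^*$) along this filtration, together with $i(0)=0$ from (A1), and arguing by induction on $k$ that $i(F_k)=\sum_{j=1}^{k}i(A/\mathfrak p_j)$, one obtains $i(F)=\sum_{k=1}^{n}i(A/\mathfrak p_k)$, and likewise $i'(F)=\sum_{k=1}^{n}i'(A/\mathfrak p_k)$. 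This is a finite sum, so there is no issue with summands equal to $\infty$; and by hypothesis each summand agrees, whence $i(F)=i'(F)$. Combining with the first step, $i(M)=\sup_{F\in\mathcal F(M)}i(F)=\sup_{F\in\mathcal F(M)}i'(F)=i'(M)$ for all $M$.

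The only genuine obstacle is having the prime-filtration theorem at hand; once it is invoked, the remainder is routine bookkeeping with additivity and suprema. It is worth remarking that additivity is used only across one submodule at a time, so no compatibility between different prime filtrations of $F$ needs to be checked: any single filtration yields $i(F)$, and its numerical contribution $\sum_k i(A/\mathfrak p_k)$ is automatically filtration-independent precisely because it equals $i(F)$. Noetherianity is essential here (it guarantees the filtration is finite), as is the commutativity of $A$ (it underlies the theory of associated primes).
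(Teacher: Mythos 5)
Your proof is correct: upper continuity reduces the claim to finitely generated modules, and the prime-filtration theorem over a commutative Noetherian ring combined with additivity (A2$^*$) gives $i(F)=\sum_k i(A/\mathfrak p_k)=\sum_k i'(A/\mathfrak p_k)=i'(F)$, with no issue when summands are infinite since they agree termwise. The paper itself offers no proof (it cites V\'amos), and your argument is exactly the standard one underlying that citation, so there is nothing to add.
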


This fact \ref{V}, combined with the fact that $h_a\in\mathfrak H(\af_\abg)$ is a length function of $\mod_{\Z[t]}$, 
 is used in \cite{DG} to give a second proof of the Uniqueness Theorem for $h_a$ (see Theorem \ref{UT}).

\medskip
The following general problem remains open.

\begin{problem}
Find a family of axioms that give uniqueness for an abstractly defined entropy function $h$ of $\af_R$, where $R$ is an integral domain.
\end{problem}

\section{Radicals capturing the dynamics of module endomorphisms}\label{qp-sec}

\subsection{Radicals of $\af_R$}

Let $R$ be a ring. We recall that if $(M,\phi)\in\af_R$ and $F$ is a subset of $M$, for any positive integer $n$, the \emph{$n$-th $\phi$-trajectory} of $F$ with respect to $\phi$ is $$T_n(\phi,F)=F+\phi(F)+\ldots+\phi^{n-1}(F),$$ and the $\phi$-trajectory of $F$ is $$T(\phi,F)=\sum_{n\in\N}\phi^n(F).$$

\smallskip
In \cite{DG1} the Pinsker subgroup $\P_{h_a}(G,\phi)$ of an algebraic flow $(G,\phi)\in\af_\abg$ was characterized also in terms of the quasi-periodic points of $\phi$.

\begin{definition}\label{DefQP}
Let $(M,\phi)\in\af_R$. An element $x\in M$ is a \emph{quasi-periodic point} of $\phi$ if there exist $n>m$ in $\N$ such that $\phi^n(x)=\phi^m(x)$. Moreover, $\phi$ is \emph{pointwise quasi-periodic} if every $x\in M$ is a quasi-periodic point of $\phi$; and $\phi$ is \emph{quasi-periodic} if there exist $n>m$ in $\N$ such that $\phi^n=\phi^m$.
\end{definition}

\smallskip
We generalize the definition given in \cite{DG1} to every $(M,\phi)\in\af_R$ letting by induction:
\begin{itemize}
\item[(a)] $Q_0(M,\f)=0$, and for every $n\in\N$
\item[(b)] $Q_{n+1}(M,\f)=\{x\in M: (\exists n>m \ \text{in}\ \N)\ (\phi^n-\phi^m)(x)\in Q_n(M,\phi)\}$.
\end{itemize}
This defines an increasing chain of $\phi$-invariant submodules of $M$ $$Q_0(M,\f)\subseteq Q_1(M,\f)\subseteq \ldots\subseteq Q_n(M,\f)\subseteq\ldots,$$ where $Q_1(M,\phi)$ is the submodule of $M$ consisting of all quasi-periodic points of $\phi$. 
In particular,  $\phi$ is pointwise quasi-periodic if and only if $M=Q_1(M,\phi)$. 
Let
 $$
 \QQ(M,\f):=\bigcup_{n\in\N} Q_n(M,\f).
 $$
Then also $\QQ(M,\phi)$ is a $\phi$-invariant submodule of $M$.

\medskip
Imitating the definition of $\QQ$, for every $(M,\phi)\in\af_R$ define by induction:
\begin{itemize}
\item[(a)] $O_0(M,\f)=0$, and for every $n\in\N$
\item[(b)] $O_{n+1}(M,\f)=\{x\in M: \phi(x)\in O_n(M,\phi)\}$.
\end{itemize}
This defines an increasing chain of $\phi$-invariant submodules of $M$ $$O_0(M,\f)\subseteq O_1(M,\f)\subseteq \ldots\subseteq O_n(M,\f)\subseteq \ldots.$$
Note that $O_n(M,\phi)=\ker\phi^n$ for every $n\in\N$.
Let $$\mathfrak O(M,\f):=\bigcup_{n\in\N} O_n(M,\f).$$
Then also $\mathfrak O(M,\phi)$ is a $\phi$-invariant submodule of $M$.

\smallskip
Also for the identical endomorphism it is possible to define by induction:
\begin{itemize}
\item[(a)] $I_0(M,\f)=0$, and for every $n\in\N$
\item[(b)] $I_{n+1}(M,\f)=\{x\in M: (\phi-1_M)(x)\in I_n(M,\phi)\}$.
\end{itemize}
This defines an increasing chain of $\phi$-invariant submodules of $M$ $$I_0(M,\f)\subseteq I_1(M,\f)\subseteq \ldots\subseteq I_n(M,\f)\subseteq \ldots.$$
Note that $I_1(M,\phi)$ is the submodule of $M$ of fixed points of $\phi$. Let $$\mathfrak I(M,\f):=\bigcup_{n\in\N} I_n(M,\f).$$
Then also $\mathfrak I(M,\phi)$ is a $\phi$-invariant submodule of $M$.

\smallskip
Note that $O_n(M,\phi)\subseteq Q_n(M,\phi)$ and $I_n(M,\phi)\subseteq Q_n(M,\phi)$ for every $n\in\N$. So also $\mathfrak O(M,\phi)\subseteq\mathfrak Q(M,\phi)$ and $\mathfrak I(M,\phi)\subseteq \mathfrak Q(M,\phi)$. One can prove directly that $\QQ$, $\mathfrak O$ and $\mathfrak I$ are radicals of $\af_R$; Corollary \ref{uff} will prove it easily in $\mod_{R[t]}$ using the isomorphism given by Theorem \ref{iso}.

Moreover, $\mathfrak O$ (respectively, $\mathfrak I$) is the smallest radical of $\af_R$ containing all zero endomorphisms (respectively, all identities). So, $M= \mathfrak O(M)$ (respectively, $M=\mathfrak I(M)$)
if and only if for every $x\in M$ there exists $n\in \N_+$ such that $\phi^n(x) =0$ (respectively, $(\phi-id_M)^n(x) =0$). 

\medskip
The property in (a) of the next lemma is proved in \cite{DG1} for $\af_\abg$. It is possible to prove it in the same way for $\af_R$, and analogously the properties in (b) and (c).

\begin{lemma}\label{Q/Q}
Let $(M,\phi)\in\af_R$ and $n\in\N$. Then:
\begin{itemize}
\item[(a)] $Q_n(M,\phi)$ is a $\phi$-invariant submodule of $M$, and for the induced endomorphism $\overline \f_n$ of $M/  Q_n(M,\f)$, $$Q_{n+1}(M,\f)/ Q_n(M,\f)= Q_1(M/ Q_n(M,\f),\overline\f_n);$$
\item[(b)] $O_n(M,\phi)$ is a $\phi$-invariant submodule of $M$, and for the induced endomorphism $\overline \f_n$ of $M/  O_n(M,\f)$,  $$O_{n+1}(M,\f)/ O_n(M,\f)= O_1(M/ O_n(M,\f),\overline\f_n);$$
\item[(c)] $I_n(M,\phi)$ is a $\phi$-invariant submodule of $M$, and for the induced endomorphism $\overline \f_n$ of $M/  I_n(M,\f)$, $$I_{n+1}(M,\f)/ I_n(M,\f)= I_1(M/ I_n(M,\f),\overline\f_n).$$
\end{itemize}
\end{lemma}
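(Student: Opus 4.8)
The plan is to handle the three statements (a), (b), (c) in parallel, since each concerns a chain $X_0\subseteq X_1\subseteq\ldots$ with $X_n\in\{Q_n,O_n,I_n\}$ defined by $X_0(M,\f)=0$ and
$$X_{n+1}(M,\f)=\{x\in M : w(\f)(x)\in X_n(M,\f)\ \text{for some}\ w\in S\},$$
where $S=\{\f^a-\f^b : a>b\ \text{in}\ \N\}$ in case (a), $S=\{\f\}$ in case (b), and $S=\{\f-1_M\}$ in case (c). I would isolate two ingredients: (1) for \emph{every} flow $(N,\psi)\in\af_R$, the first term $X_1(N,\psi)$ is a $\psi$-invariant submodule of $N$; and (2) the pullback identity $X_{n+1}(M,\f)=\pi_n^{-1}\big(X_1(M/X_n(M,\f),\overline\f_n)\big)$, where $\pi_n\colon M\to M/X_n(M,\f)$ is the projection. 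From (1) and (2) all assertions follow by a short induction on $n$.

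First I would prove (1). For case (b), $O_1(N,\psi)=\ker\psi$, obviously a submodule, and $\psi(\ker\psi)=0\subseteq\ker\psi$; for case (c), $I_1(N,\psi)=\ker(\psi-1_N)$ is a submodule, and $\psi(x)=x$ forces $\psi(\psi(x))=\psi(x)$, so it is $\psi$-invariant. The only substantive point of the whole lemma is case (a): $x\in Q_1(N,\psi)$ means $\psi^a(x)=\psi^b(x)$ for some $a>b\ge0$, equivalently the polynomial $t^b(t^{a-b}-1)$ annihilates $x$ in the $R[t]$-module $N_\psi$ attached to $(N,\psi)$ by Theorem \ref{iso}. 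Given $x,y\in Q_1(N,\psi)$ killed by $t^{b_1}(t^{d_1}-1)$ and $t^{b_2}(t^{d_2}-1)$ respectively, set $c=\max\{b_1,b_2\}$ and $e=\mathrm{lcm}\{d_1,d_2\}$; since $t^{d_i}-1$ divides $t^e-1$ and $t^{b_i}$ divides $t^c$, the polynomial $t^c(t^e-1)$ lies in both annihilators, hence annihilates $\lambda x+y$ for every $\lambda\in R$, so $Q_1(N,\psi)$ is a submodule; it is $\psi$-invariant because $t^b(t^d-1)$ killing $x$ also kills $\psi(x)=tx$.

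Next I would establish (2) together with the formula by induction on $n$, carrying along the statement ``$X_n(M,\f)$ is a $\f$-invariant submodule of $M$'' (which also re-proves the ``increasing chain of $\f$-invariant submodules'' remark preceding the lemma). The base $n=0$ is trivial. For the step, $X_n(M,\f)$ being a $\f$-invariant submodule means $\overline\f_n$ is well defined on $M/X_n(M,\f)$ with $\overline\f_n^{\,k}(x+X_n)=\f^k(x)+X_n$ for all $k$; hence for $w(\f)\in S$ one has $w(\overline\f_n)(x+X_n)=w(\f)(x)+X_n$, so $x+X_n\in X_1(M/X_n(M,\f),\overline\f_n)$ iff $w(\f)(x)\in X_n(M,\f)$ for a suitable $w\in S$, i.e. iff $x\in X_{n+1}(M,\f)$. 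This gives $\pi_n^{-1}\big(X_1(M/X_n,\overline\f_n)\big)=X_{n+1}(M,\f)$; in particular $X_n(M,\f)=\pi_n^{-1}(0)\subseteq X_{n+1}(M,\f)$, and applying $\pi_n$ yields
$$X_{n+1}(M,\f)/X_n(M,\f)=X_1(M/X_n(M,\f),\overline\f_n),$$
the desired identity. Moreover $X_{n+1}(M,\f)$ is the $\pi_n$-preimage of the $\overline\f_n$-invariant submodule $X_1(M/X_n,\overline\f_n)$ (ingredient (1)), hence a $\f$-invariant submodule of $M$, which closes the induction.

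The only real obstacle is the submodule property of $Q_1$, namely producing a common quasi-period for a sum of quasi-periodic points; the $\mathrm{lcm}$/divisibility trick above settles it, and this is precisely the argument of \cite{DG1} in $\af_\abg$, which transfers verbatim to $\af_R$ since it uses only module operations and the commutation of $\f$ with the operators $w(\f)$. Cases (b) and (c) are the easier specializations in which $X_1$ is literally a kernel, so no new idea is needed there.
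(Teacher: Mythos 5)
Your proof is correct, and it is essentially the argument the paper has in mind: the paper gives no proof of its own, deferring to \cite{DG1} for (a) in $\af_\abg$ and asserting the same argument works for $\af_R$ and for (b), (c). Your unified treatment (the only nontrivial point being that $Q_1$ is a submodule, settled by the common quasi-period via $\max$ of the $t$-powers and $\mathrm{lcm}$ of the exponents, followed by the pullback identity and induction) is exactly that transfer, and all steps check out, including the $R$-linearity of the integer-coefficient polynomials in $\phi$.
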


 For $(G,\phi)\in\af_\mathbf{TorAbGrp}$, we shall see in \eqref{DG1} that $\QQ(G,\phi)=Q_1(M,\phi)$, but this is not the case in general, as the following example shows.

\begin{example}\label{BIG-matrix}
Let $G=\Z^{(\N)}=\bigoplus_{n=1}^\infty\hull{e_n}$, and let $\phi$ be the automorphism of $G$ given by the matrix 
$$\begin{pmatrix} 
1 & 1 & 1 & \ldots \\
0 & 1 & 1 & \ldots \\
0 & 0 & 1 & \ldots \\
\vdots & \vdots & \ddots & \ddots
\end{pmatrix}.$$
For every $n\in\N_+$ let $G_n=\hull{e_1,\ldots,e_n}$. Then $G_n=Q_n(G,\phi)$ and $\QQ(G,\f)=G$. In particular, we have the following strictly increasing chain
$$0=Q_0(G,\phi)\subset Q_1(G,\phi)\subset Q_2(G,\phi)\subset \ldots\subset Q_n(G,\phi)\subset \ldots\subset \QQ(G,\phi)=G.$$
\end{example}

 From the construction of $\QQ(M,\phi)$ for $(M,\phi)\in\af_R$, one can realize  that 
$$\QQ(M,\phi)=\left\{x\in M: \exists k\in\N, (n_j,m_j)\in\N^2,n_j>m_j,j=0,\ldots,k-1, \left(\prod_{j=0}^{k-1}(\phi^{n_j}-\phi^{m_j})\right)(x)=0 \right\}.$$
Motivated by this equality, one can define now analogously another submodule for every $(M,\phi)\in\af_R$, letting
$$
\mathfrak A(M,\phi)=\left\{x\in M:\exists k\in\N, a_j\in R, j=0,\ldots,k-1,\phi^k(x)=\sum_{j=0}^{k-1}a_j\phi^j(x) \right\},
$$
i.e., $x\in \mathfrak A(M,\phi)$ if and only if $x\in T_k(\phi,Rx)$ (or, equivalently, $T_k(\phi,Rx)$ is a $\phi$-invariant submodule) for some $k\in\N_+$. 
Then $\mathfrak A(M,\phi)$ is a $\phi$-invariant submodule of $M$ and 
 $\QQ(M,\phi)\subseteq \mathfrak A(M,\phi)$ for every $(M,\phi)\in\af_R$. We shall see in Corollary \ref{uff} that also $\mathfrak A$ is a radical.

\smallskip
According to \cite{DGSZ}, an endomorphism $\phi$ of $M\in\mod_R$ is \emph{pointwise integral} if for every $x\in M$ there exists a monic polynomial $p(t)\in R[t]$ such that $p(\phi)(x)=0$, in other words, if $M=\mathfrak A(M,\phi)$. 
According to \cite{Simone}, $\phi$ is \emph{pointwise algebraic} if for every $x\in M$ there exists a polynomial $p(t)\in R[t]\setminus\{0\}$ such that $p(\phi)(x)=0$. Clearly, pointwise integral implies pointwise algebraic. So, for $(M,\phi)\in\af_R$ we introduce also $$\mathfrak W(M,\phi)=\{x\in M:\exists p(t)\in R[t]\setminus\{0\},p(\phi)(x)=0\},$$
and obviously $\mathfrak A(M,\phi)\subseteq \mathfrak W(M,\phi)$. Clearly, $\phi$ is pointwise algebraic if and only if $M=\mathfrak W(M,\phi)$.
If $R$ is an integral domain, then $\mathfrak W(M,\phi)$ is a $\phi$-invariant submodule of $M$, and so $\mathfrak W$ is another radical of $\af_R$, so that 
$$
\QQ(M,\phi)\subseteq\mathfrak A(M,\phi)\subseteq \mathfrak W(M,\phi)
$$
for every $(M,\phi)\in\af_R$.
\begin{example}\label{LAST-ex}
Let $R=\Z$. If $G$ is a torsion abelian group, then
$$
Q_1(G,\phi)= \mathfrak Q (G, \phi)= \mathfrak A(G,\phi) \subseteq \mathfrak W(G,\phi)=G\mbox{ for every }\phi \in \End(G).
$$
\begin{itemize}
\item[(a)] If $p$ is a prime number and $G =\Z(p)^{(\N)}$, then $\mathfrak W(G,\phi)=G$ 
for every $\phi\in \End(G)$, while $\mathfrak A(G,\beta_{\Z(p)})=0$. 
\item[(b)] If $G$ is a torsion-free abelian group of finite rank $n\in\N_+$, then every $\phi\in\End(G)$ is pointwise algebraic. Indeed, each $\phi\in\End(G)$ can be expressed by a matrix $A\in M_n(\Q)$. Let $p(t)\in\Q[t]$ be the characteristic polynomial of $A$. After multiplication by an integer we may assume that $p(t) \in \Z[t]$ (although it will not be monic any more). Then $p(\phi)(x) = 0$ holds for every  $x \in G$. Consequently, $G=\mathfrak W(G,\phi)$ for every $\phi\in\End(G)$.
\item[(c)] If $\phi\in\End(\Q)$, then there exists $r\in\Q$ such that $\phi(x)=r x$ for every $x\in \Q$. Hence, $$\mathfrak A(\Q,\phi)=\begin{cases}0 & \text{if}\ r\not\in\Z, \\ \Q & \text{if}\ r\in\Z; \end{cases}\ \ \text{and}\ \ \QQ(\Q,\phi)=\begin{cases}0 & \text{if}\ r\not\in\{0,1\}, \\ \Q & \text{if}\ r\in\{0,1\}. \end{cases}$$
So,
$$
0=\QQ(\Q,\phi)\subset \mathfrak A(\Q,\phi)=\Q=\mathfrak W(\Q,\phi)\mbox{ if }r\in\Z\setminus\{0,1\}.
$$ 
On the other hand, 
$$
\QQ(\Q,\phi)=0=\mathfrak A(\Q,\phi)\subset\Q=\mathfrak W(\Q,\phi)\mbox{ if }r\not\in\Z.$$
\end{itemize}
\end{example}


\subsection{Passing to $\mod_{R[t]}$}\label{4.2}

We pass now to $\mod_{R[t]}$  (through the isomorphism given by Theorem \ref{iso}) and its radicals $\QQ, \mathfrak O, \mathfrak I$, $\mathfrak A$ and $\mathfrak W$.

\begin{remark}\label{remark}
It is easy to see that if $M\in\mod_{R[t]}$, then:
\begin{itemize}
\item[(a)] $x\in\QQ(M)$ if and only if there exist $k\in\N_+$ and pairs of naturals $(n_j,m_j)$ with $n_j>m_j$ for $j=0,\ldots,k-1$ and such that $(t^{n_0}-t^{m_0})\cdot\ldots\cdot (t^{n_{k-1}}-t^{m_{k-1}})\cdot x=0$;
\item[(b)] $x\in\mathfrak O(M)$ if and only if there exists $k\in\N$ such that $t^k\cdot x=0$;
\item[(c)] $x\in\mathfrak I(M)$ if and only if there exists $k\in\N$ such that $(t-1)^k\cdot x=0$;
\item[(d)] $x\in\mathfrak A(M)$ if and only if there exists a monic $p(t)\in R[t]$ such that $p(t)\cdot x=0$;
\item[(e)] $x\in\mathfrak W(M)$ if and only if there exists $p(t)\in R[t]\setminus\{0\}$ such that $p(t)\cdot x=0$.
\end{itemize}
\end{remark}

This can be expressed by radicals of $\mod_{R[t]}$  (see Corollary \ref{uff}). We give first the following general result: 

\begin{theorem}\label{tts}
Let $A$ be a ring and $S\subseteq A$. For every left $A$-module $M$, let $$\r_S(M)=\{x\in M: (\exists s\in S)sx=0\}.$$ 
If $S$ is multiplicatively closed (i.e., $s',s \in S$ yields $s's\in S$), then $\r_S:\mod_A\to\mod_A$ is a hereditary radical. In particular, $\mathfrak t_{\r_S}=(\mathcal T_{\r_S},\mathcal F_{\r_S})$ is a hereditary torsion theory in $\mod_A$.
\end{theorem}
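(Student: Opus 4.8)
The plan is to verify directly the defining properties of a hereditary radical for $\r_S$, after which the last assertion about $\mathfrak t_{\r_S}$ follows immediately from the general facts about hereditary radicals recalled just before Definition~\ref{tr-closure}. The only place where multiplicative closure of $S$ is actually used is in checking the radical property (b) below, so I would flag that step as the one to carry out most carefully.

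First I would check that $\r_S$ is well-defined as a subfunctor of the identity. For a fixed left $A$-module $M$, the set $\r_S(M)=\{x\in M:(\exists s\in S)\,sx=0\}$ is a submodule: if $sx=0$ and $s'x'=0$ with $s,s'\in S$, then $s's(x+x')=0$ and $s's\in S$ by multiplicative closure, so $x+x'\in\r_S(M)$; closure under the $A$-action is clear since $s(ax)=a(sx)=0$ whenever $sx=0$ and $a\in A$ — here one should be mildly careful that $A$ need not be commutative, but $s$ acts on the \emph{left} and $a$ on the left as well, and $sax$ need not vanish, so instead one argues $s(ax)=(sa)x$, which is \emph{not} obviously zero. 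The correct fix is to note that $\r_S(M)$ is closed under the action because $x\in\r_S(M)$, $sx=0$ gives $s(ax)=\,?$; since this is genuinely an issue for non-commutative $A$, I expect the intended reading is that $S$ lies in the \emph{center} of $A$ (or $A$ is commutative — indeed in the application $A=R[t]$ and $S$ consists of polynomials in $t$, which commute with everything only if $R$ is commutative). Assuming $S$ central, $s(ax)=a(sx)=0$, so $\r_S(M)$ is a submodule. Functoriality is then routine: for $f:M\to N$ and $x\in\r_S(M)$ with $sx=0$, we get $s f(x)=f(sx)=0$, so $f(\r_S(M))\subseteq\r_S(N)$.

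Next I would establish the three properties. \emph{Heredity}: for $N\subseteq M$, clearly $\r_S(N)=\{x\in N:(\exists s\in S)sx=0\}=N\cap\r_S(M)$ by definition, so $\r_S$ is hereditary, hence idempotent. \emph{Radical}: given $M$, I must show $\r_S(M/\r_S(M))=0$. Take $\bar x\in M/\r_S(M)$ with $s\bar x=0$ for some $s\in S$; lift to $x\in M$, so $sx\in\r_S(M)$, i.e.\ there is $s'\in S$ with $s'(sx)=0$, that is $(s's)x=0$ with $s's\in S$ by multiplicative closure, whence $x\in\r_S(M)$ and $\bar x=0$. This is exactly the step that needs $S$ multiplicatively closed. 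Finally, since $\r_S$ is a hereditary radical, the pair $\mathfrak t_{\r_S}=(\mathcal T_{\r_S},\mathcal F_{\r_S})$ is a hereditary torsion theory in $\mod_A$ by the correspondence between hereditary radicals and hereditary torsion theories recalled in Section~\ref{tt-sec} (following \cite[Corollary 2.5]{D}).

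The main obstacle, as indicated, is the non-commutativity point: one should either assume $S$ is contained in the center of $A$ (which holds in all intended applications, where $S$ is a set of polynomials in the central indeterminate $t$ inside $R[t]$ — provided $R$ is commutative, or more generally provided the chosen elements are central) or restrict to commutative $A$ throughout. Granting that, every step above is a one-line verification and no serious calculation is required. $\qed$
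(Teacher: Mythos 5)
Your proposal is correct and takes essentially the same route as the paper: heredity is read off directly from the definition, and the radical property is proved by lifting a torsion coset and invoking multiplicative closure of $S$, exactly as in the paper, with the torsion-theory conclusion then following from the standard correspondence recalled in Section~\ref{tt-sec}. The centrality point you flag is a genuine subtlety the paper passes over in silence (it never checks that $\r_S(M)$ is a submodule, which can fail for non-central $S$ over a non-commutative $A$), and your reading --- that the elements of $S$ are central, as they are in the intended applications in $R[t]$ --- is the natural repair.
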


\begin{proof}
That $\r_S$ is hereditary follows directly from the definitions. To check that $\r_S$ is a radical take an $A$-module $M$ and $x\in M$ such that the coset $x+\r_S(M)\in M/\r_S(M)$ belongs to $\r_S(M/\r_S(M))$. Then $sx\in\r_S(M)$ for some $s\in S$ by the definition of $\r_S$. So $s'(sx)\in\r_S(M)$ for some $s'\in S$. This yields $x\in\r_S(M)$, as $s,s'\in S$ and $S\subseteq A$ is multiplicatively closed. 
\end{proof}

Theorem \ref{tts} together with Remark \ref{remark} give immediately the following result. Note that $R[t]\setminus\{0\}$ is multiplicatively closed if and only if $R$ is an integral domain.

\begin{corollary}\label{uff}
\begin{itemize}
\item[(a)] Let $Q$ be the multiplicative closure of the subset $\{t^n-t^m:n,m\in\N,n>m\}$ of $R[t]$. Then $\QQ=\r_Q$.
\item[(b)] Let $O=\{t^n:n\in\N\}\subseteq R[t]$. Then $\mathfrak O=\r_O$.
\item[(c)] Let $I=\{(t-1)^n:n\in\N\}\subseteq R[t]$. Then $\mathfrak I=\r_I$.
\item[(d)] Let $A=\{p(t)\in R[t]:p(t)\ \text{monic}\}\subseteq R[t]$. Then $\mathfrak A=\r_A$.
\item[(e)] Let $W=R[t]\setminus\{0\}$. Then $\mathfrak W=\r_W$.
\end{itemize}

\smallskip
\noindent In particular, $\QQ$, $\mathfrak O$, $\mathfrak I$ and $\mathfrak A$ are hereditary radicals of $\mod_{R[t]}$. If $R$ is an integral domain, then also $\mathfrak W$ is a hereditary radical of $\mod_{R[t]}$.
\end{corollary}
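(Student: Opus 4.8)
The plan is to obtain all five identities as a direct consequence of Theorem \ref{tts} together with the elementwise descriptions gathered in Remark \ref{remark}. Theorem \ref{tts} tells us that for a multiplicatively closed subset $S$ of a ring $A$, the assignment $M\mapsto\r_S(M)=\{x\in M:\exists s\in S,\ sx=0\}$ is a hereditary radical of $\mod_A$ (hence gives a hereditary torsion theory). So, taking $A=R[t]$, the whole proof reduces to two checks: that each of $Q$, $O$, $I$, $A$, $W$ is multiplicatively closed in $R[t]$, and that the resulting $\r_S(M)$ agrees for every $M\in\mod_{R[t]}$ with the corresponding submodule $\QQ(M)$, $\mathfrak O(M)$, $\mathfrak I(M)$, $\mathfrak A(M)$, $\mathfrak W(M)$.

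The second check is immediate: comparing the definition of $\r_S$ with the five items of Remark \ref{remark}, in each case one is asserting the existence of the same element of $R[t]$ annihilating $x$. For instance, $x\in\mathfrak A(M)$ if and only if $p(t)x=0$ for some monic $p(t)\in R[t]$, which is precisely $x\in\r_A(M)$; and analogously for $\QQ=\r_Q$, $\mathfrak O=\r_O$, $\mathfrak I=\r_I$, $\mathfrak W=\r_W$. Hence the content of the corollary lies entirely in the multiplicative-closure statements. First I would note that $Q$ is multiplicatively closed by construction, being defined as the multiplicative closure of $\{t^n-t^m:n>m\}$. Next, $O$ and $I$ are multiplicatively closed since $t^n\cdot t^m=t^{n+m}$ and $(t-1)^n\cdot(t-1)^m=(t-1)^{n+m}$. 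For $A$ I would observe that the product of two monic polynomials of degrees $d$ and $e$ has degree $d+e$ and leading coefficient $1\cdot 1=1$, hence is again monic (and in particular nonzero), a fact that uses nothing about $R$. Finally, $W=R[t]\setminus\{0\}$ is multiplicatively closed exactly when $R[t]$ has no zero-divisors, i.e., when $R$ is an integral domain --- the observation recorded immediately before the statement. Applying Theorem \ref{tts} in each case then gives that $\QQ$, $\mathfrak O$, $\mathfrak I$, $\mathfrak A$ are hereditary radicals of $\mod_{R[t]}$, and that $\mathfrak W$ is one as well whenever $R$ is an integral domain.

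There is no genuine obstacle here --- the statement is essentially a bookkeeping consequence of the two cited results. The one point deserving care is item (e): for arbitrary $R$ the elementwise equality $\mathfrak W(M)=\r_W(M)$ still holds by Remark \ref{remark}(e), but without the integral-domain hypothesis $W$ need not be multiplicatively closed, so Theorem \ref{tts} does not apply and one cannot conclude that this common subset is a submodule (let alone a radical). This is exactly why the final sentence of the statement singles out the integral domain case for $\mathfrak W$.
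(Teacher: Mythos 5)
Your proposal is correct and follows exactly the route the paper takes: the corollary is obtained "immediately" from Theorem \ref{tts} combined with the elementwise descriptions in Remark \ref{remark}, with the multiplicative closure of each set checked (the paper likewise records that $R[t]\setminus\{0\}$ is multiplicatively closed if and only if $R$ is an integral domain, which is why $\mathfrak W$ needs that hypothesis). Your extra care in item (e) about why the domain hypothesis cannot be dropped matches the paper's intent.
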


Both $O$ and $I$ are contained in $Q$, and both $O$ and $I$ generate $Q$ as a multiplicative closed subset of $R[t]$. Moreover, $Q$ is contained in $A$ and $A$ in $W$. Then 
$$
\mathfrak O\leq \QQ,\ \mathfrak I\leq\QQ\ \text{and}\ \mathfrak Q\leq\mathfrak A;
$$
if $R$ is an integral domain, also $\mathfrak A\leq\mathfrak W$. Furthermore, $M = \mathfrak W (M)$ precisely when $M$ is torsion as an $R[t]$-module. Alternatively, 
$M \not = \mathfrak W (M)$ precisely when $M$ contains a copy of the cyclic module $R[t]$ (in terms of $\af_R$, when $M$ contains a copy of the Bernoulli shift $\beta_R$, see the proof of Theorem \ref{PvsW} for the relative argument).

\medskip
Corollary \ref{uff} and Theorem \ref{tts} give immediately the following

\begin{corollary}
The pairs $\mathfrak t_\QQ=(\mathcal T_\QQ,\mathcal F_\QQ)$, $\mathfrak t_\mathfrak O=(\mathcal T_\mathfrak O,\mathcal F_\mathfrak O)$, $\mathfrak t_\mathfrak I=(\mathcal T_\mathfrak I,\mathcal F_\mathfrak I)$ and $\mathfrak t_\mathfrak A=(\mathcal T_\mathfrak A,\mathcal F_\mathfrak  A)$ are hereditary torsion theories in $\mod_{R[t]}$. If $R$ is an integral domain, then also $\mathfrak t_\mathfrak W=(\mathcal T_\mathfrak W,\mathcal F_\mathfrak  W)$ is a hereditary torsion theory in $\mod_{R[t]}$.
\end{corollary}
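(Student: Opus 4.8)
The plan is to deduce the statement directly from Corollary \ref{uff} together with Theorem \ref{tts}, so that essentially no new work is required: all the substance has already been invested in identifying each of the five radicals with a torsion radical $\r_S$ attached to a suitable subset $S\subseteq R[t]$, and in Theorem \ref{tts}'s verification that $\r_S$ is a hereditary radical whenever $S$ is multiplicatively closed.

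First I would record that each of the sets appearing in Corollary \ref{uff} is multiplicatively closed in $R[t]$: the sets $Q$ and $A$ are multiplicative closures by their very definition, while $O=\{t^n:n\in\N\}$ and $I=\{(t-1)^n:n\in\N\}$ are closed under multiplication because $t^n t^m = t^{n+m}$ and $(t-1)^n(t-1)^m=(t-1)^{n+m}$. Then, applying Theorem \ref{tts} with $A=R[t]$ to each of $S\in\{Q,O,I,A\}$, we obtain that $\r_S$ is a hereditary radical of $\mod_{R[t]}$ and hence, by the last sentence of that theorem, that $\mathfrak t_{\r_S}=(\mathcal T_{\r_S},\mathcal F_{\r_S})$ is a hereditary torsion theory in $\mod_{R[t]}$. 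Combining this with the equalities $\QQ=\r_Q$, $\mathfrak O=\r_O$, $\mathfrak I=\r_I$, $\mathfrak A=\r_A$ of Corollary \ref{uff}(a)--(d), and noting that consequently $\mathcal T_\QQ=\mathcal T_{\r_Q}$, $\mathcal F_\QQ=\mathcal F_{\r_Q}$ and similarly for the other three, yields that $\mathfrak t_\QQ$, $\mathfrak t_{\mathfrak O}$, $\mathfrak t_{\mathfrak I}$ and $\mathfrak t_{\mathfrak A}$ are hereditary torsion theories in $\mod_{R[t]}$.

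For the last assertion I would use the remark, already made just before Corollary \ref{uff}, that $W=R[t]\setminus\{0\}$ is multiplicatively closed precisely when $R$ is an integral domain: if $R$ is a domain then so is the polynomial ring $R[t]$ (the leading coefficient of a product of nonzero polynomials is the product of the leading coefficients, hence nonzero), so a product of nonzero polynomials is nonzero. Under this hypothesis, Corollary \ref{uff}(e) gives $\mathfrak W=\r_W$, and a final application of Theorem \ref{tts} to $S=W$ shows that $\mathfrak t_{\mathfrak W}=(\mathcal T_{\mathfrak W},\mathcal F_{\mathfrak W})$ is a hereditary torsion theory in $\mod_{R[t]}$.

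I do not expect any genuine obstacle here: the corollary is a formal consequence of the two results it cites, and the only point needing a word of care is the trivial observation that $R[t]$ inherits the integral-domain property from $R$, which is what restricts the $\mathfrak W$-statement to that case (indeed, without it $\mathfrak W(M,\phi)$ need not even be a submodule, as noted earlier). One could, if desired, spell out that the torsion classes $\mathcal T_\QQ,\dots,\mathcal T_{\mathfrak W}$ are exactly the classes of objects of $\mod_{R[t]}$ on which the corresponding radical is the identity, but this is immediate from the definitions.
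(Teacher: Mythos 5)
Your proof is correct and is essentially the paper's own argument: the corollary is stated there as an immediate consequence of Corollary \ref{uff} and Theorem \ref{tts}, exactly as you combine them (including the observation that $R[t]\setminus\{0\}$ is multiplicatively closed precisely when $R$ is an integral domain).
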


By Theorem \ref{hb<->tt} there exist binary entropy functions $h_\QQ$, $h_\mathfrak O$, $h_\mathfrak I$ and $h_\mathfrak A$ in $\HB(\mod_{R[t]})$ which correspond respectively to these torsion theories, that is, $\mathfrak t_{h_\QQ}=\mathfrak t_\QQ$, $\mathfrak t_{h_\mathfrak O}=\mathfrak t_\mathfrak O$, $\mathfrak t_{h_\mathfrak I}=\mathfrak t_\mathfrak I$ and $\mathfrak t_{h_\mathfrak A}=\mathfrak t_\mathfrak A$. If $R$ is an integral domain, then there exists also $h_\mathfrak W\in\HB(\mod_{R[t]})$ such that $\mathfrak t_{h_\mathfrak W}=\mathfrak t_\mathfrak W$.


\subsection{Radicals and axioms}

We use now the radicals defined above to clarify the relations among some of the axioms introduced in Section \ref{ax-sec}. The following lemma follows from Corollary \ref{Ph=Phb} and gives various equivalent forms of (A0$_0$) and (A0$_1$) in terms of the radicals $\mathfrak O$, $\mathfrak I$ and the Pinsker radical $\P_h$.
Recall that the binary entropy functions $h_\mathfrak O$ and $h_\mathfrak I$, introduced at the very end of Section \ref{4.2}, satisfy $\P_{h_\mathfrak O}=\mathfrak O$ and $\P_{h_\mathfrak I}=\mathfrak I$. 

\begin{lemma}\label{OI-P}
Let $h\in\mathfrak H(\mod_{R[t]})$. The following conditions are equivalent:
\begin{itemize}
\item[(a)] $h$ satisfies \emph{(A0$_0$)} \emph{(}respectively, \emph{(A0$_1$)}\emph{)};
\item[(b)] $h^b$ satisfies \emph{(A0$_0$)} \emph{(}respectively, \emph{(A0$_1$)}\emph{)};
\item[(c)] $\mathfrak O\leq\P_h= \P_{h^b}$ \emph{(}respectively, $\mathfrak I\leq\P_h=\P_{h^b}$\emph{)};
\item[(d)] $h_\mathfrak O\prec h^b$ \emph{(}respectively, $h_\mathfrak I\prec h^b$\emph{)}.
\end{itemize}
\end{lemma}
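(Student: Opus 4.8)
The plan is to prove the chain of equivalences (a)$\Leftrightarrow$(b)$\Leftrightarrow$(c)$\Leftrightarrow$(d) by going around the cycle, using Corollary \ref{Ph=Phb} (which gives $\P_h=\P_{h^b}$) throughout to move freely between $h$ and its binary hull.

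First I would observe that (a)$\Leftrightarrow$(b) is essentially immediate from \eqref{h*} in Proposition \ref{bheq}: since $h^b(M)=0$ if and only if $h(M)=0$ for every $M\in\mod_{R[t]}$, we have $h(0_M)=0$ if and only if $h^b(0_M)=0$ (respectively, $h(1_M)=0$ if and only if $h^b(1_M)=0$) for each $M\in\mod_R$, which is exactly the assertion that $h$ satisfies (A0$_0$) iff $h^b$ does (resp.\ for (A0$_1$)). Next, for (b)$\Leftrightarrow$(c) I would unwind the definitions: $h$ satisfies (A0$_0$) precisely when every algebraic flow of the form $(M,0_M)$, i.e.\ every object of $O(\mod_R)\cong\mod_R$, lies in $\mathcal T_h=\mathcal T_{\P_h}$; but $\mathcal T_{\P_h}$ is the torsion class of the hereditary torsion theory $\mathfrak t_h$, so this happens exactly when $\mathcal T_\mathfrak O\subseteq\mathcal T_{\P_h}$, which (by the order convention $\mathfrak t_1\le\mathfrak t_2\Leftrightarrow\mathcal T_1\supseteq\mathcal T_2$ and the bijection between hereditary radicals and hereditary torsion theories) is equivalent to $\mathfrak O=\r_{\mathfrak t_\mathfrak O}\le \r_{\mathfrak t_h}=\P_h$. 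Here I use Corollary \ref{uff}, which identifies $\mathfrak O$ with the hereditary radical $\r_O$, and the fact that $M=\mathfrak O(M)$ exactly when $\phi$ is locally nilpotent — in particular $(M,0_M)\in\mathcal T_\mathfrak O$. The case of (A0$_1$) with $\mathfrak I$ is identical, replacing $t^n$ by $(t-1)^n$. Finally, (c)$\Leftrightarrow$(d): since $\P_{h_\mathfrak O}=\mathfrak O$ and $h_\mathfrak O$ is binary, and since by Theorem \ref{hb<->tt} the correspondence $h\mapsto\mathfrak t_h$ between binary entropy functions and hereditary torsion theories is order preserving and bijective, the inequality $h_\mathfrak O\prec h^b$ translates into $\mathfrak t_{h_\mathfrak O}\le \mathfrak t_{h^b}$, i.e.\ $\mathcal T_\mathfrak O\subseteq\mathcal T_{\P_{h^b}}$, i.e.\ $\mathfrak O\le\P_{h^b}=\P_h$; and conversely. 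One has to be slightly careful that $\prec$ on binary entropy functions matches $\le$ on torsion theories in the correct direction, but this is exactly what Theorem \ref{hb<->tt} asserts.

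The only mild subtlety — and the step I would flag as the place to be careful — is the passage in (b)$\Leftrightarrow$(c) from ``$h^b$ vanishes on all zero (resp.\ identity) endomorphisms of all $R$-modules'' to the single radical inequality $\mathfrak O\le\P_h$ (resp.\ $\mathfrak I\le\P_h$). The point is that $\mathcal T_\mathfrak O$ is \emph{generated} as a torsion class by the flows $(M,0_M)$ in the sense that $M=\mathfrak O(M)$ iff $\phi$ is locally nilpotent, and such an $M$ is a directed union of its submodules $\ker\phi^n=O_n(M,\phi)$, each of which is a $\phi$-invariant submodule on which $\phi$ acts nilpotently; combining (A3) (which $h^b$ inherits) with the vanishing of $h^b$ on the genuinely zero flows and on extensions (via (A2)) forces $h^b$ to vanish on all of $\mathcal T_\mathfrak O$. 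Concretely: if $h^b(0_M)=0$ for every $M$, then using Lemma \ref{Q/Q}(b) to filter an arbitrary $(M,\phi)$ with $\mathfrak O(M,\phi)=M$ by the $O_n$, on each successive quotient $O_{n+1}/O_n\cong O_1(\cdot)$ the induced map is zero, so $h^b$ is $0$ there, and (A2) climbs the filtration while (A3) handles the union; hence $\mathcal T_\mathfrak O\subseteq\mathcal T_{h^b}=\mathcal T_{\P_h}$, giving $\mathfrak O\le\P_h$. The reverse implication is trivial since $(M,0_M)\in\mathcal T_\mathfrak O$ always. Everything else is bookkeeping with the dictionary established in Sections \ref{tt-sec}, \ref{ax-sec} and \ref{4.2}.
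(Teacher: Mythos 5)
Your handling of (a)$\Leftrightarrow$(b) via \eqref{h*} and of (a)$\Leftrightarrow$(c) is correct; in particular the filtration argument (filtering a flow with $\mathfrak O(M,\phi)=M$ by the $O_n(M,\phi)$, identifying each layer as a zero flow via Lemma \ref{Q/Q}(b), climbing with (A2) and passing to the union with (A3)/Remark \ref{A3d}) is exactly the non-trivial content here, which the paper leaves implicit, since it offers no proof beyond citing Corollary \ref{Ph=Phb} and the equalities $\P_{h_\mathfrak O}=\mathfrak O$, $\P_{h_\mathfrak I}=\mathfrak I$.

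The gap is in (c)$\Leftrightarrow$(d), at exactly the point you flagged. With the paper's conventions, $\mathfrak t_1\le\mathfrak t_2$ means $\mathcal T_1\supseteq\mathcal T_2$, so your step ``$\mathfrak t_{h_\mathfrak O}\le\mathfrak t_{h^b}$, i.e.\ $\mathcal T_\mathfrak O\subseteq\mathcal T_{\P_{h^b}}$'' reverses the inclusion. For binary functions $h_1\prec h_2$ is equivalent to $\mathcal T_{h_2}\subseteq\mathcal T_{h_1}$, so $h_\mathfrak O\prec h^b$ unwinds to $\mathcal T_h\subseteq\mathcal T_\mathfrak O$, i.e.\ $\P_h\le\mathfrak O$ --- the \emph{reverse} of (c). Indeed (c) and (d) as printed are not equivalent: take $h=h_\QQ$ (or $h_a$ in $\af_\abg$). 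Then $\mathfrak O\le\QQ=\P_h$, so (a)--(c) hold; but any non-zero flow $(M,1_M)$ lies in $\mathcal T_\QQ\setminus\mathcal T_\mathfrak O$, whence $h_\mathfrak O(M,1_M)=\infty>0=h^b(M,1_M)$ and $h_\mathfrak O\not\prec h^b$. The version of (d) that is actually equivalent to (a)--(c) is $h^b\prec h_\mathfrak O$ (respectively $h^b\prec h_\mathfrak I$): then the translation is immediate, since $\mathfrak O\le\P_h$ holds if and only if $\mathcal T_\mathfrak O\subseteq\mathcal T_h=\mathcal T_{h^b}$, if and only if $h^b\prec h_\mathfrak O$ (and the example following the lemma still goes through with this form, since $\mathfrak O\le\mathfrak I$ contradicts \eqref{FIELD} just as $\mathfrak I\le\mathfrak O$ does). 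So item (d) in the statement appears to carry a reversed inequality, and your compensating reversal of the torsion-class inclusion only makes the argument seem to close; as written, the step fails, and the equivalence with (d) in its printed form cannot be proved because it is false.
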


In the case when $R$ is a field, using the equivalences of Lemma \ref{OI-P}, we can see that (A0$_1$) and (A0$_0$) are independent: 

\begin{example}
Assume that $R$ is a field. Then 
\begin{equation}\label{FIELD}
\mathfrak O(M)\cap\mathfrak I(M)=0\mbox{ for every }M\in\mod_{R[t]}.
\end{equation}
Indeed, let $x\in \mathfrak O(M)\cap\mathfrak I(M)$. Then there exist $n,m\in\N_+$ such that $t^nx=0$ and $(t-1)^mx=0$. Since $t^n$ and $(t-1)^m$ are coprime elements of $R[t]$, there exist $u(t),v(t)\in R[t]$ such that $1=u(t)t^n+v(t)(t-1)^m$. Therefore, $x=u(t)t^nx+v(t)(t-1)^mx=0$.
\begin{itemize}
\item[(a)] To prove that (A0$_1$) does not imply (A0$_0$) consider the entropy function $h_\mathfrak I$. Since $\P_{h_\mathfrak I}=\mathfrak I$, we conclude with Lemma \ref{OI-P} that $h_\mathfrak I$ satisfies (A0$_1$). Assume for a contradiction that $h_\mathfrak I$ satisfies also (A0$_0$). Then by Lemma \ref{OI-P} $h_\mathfrak O\prec h_\mathfrak I$. By Theorem \ref{tt} this yields $\mathfrak t_{h_\mathfrak O}\leq\mathfrak t_{h_\mathfrak I}$. Since $\P_{h_\mathfrak O}=\mathfrak O$ and $\P_{h_\mathfrak I}=\mathfrak I$, it follows that $\mathfrak O\geq\mathfrak I$, which contradicts \eqref{FIELD}.
\item[(b)] To show that (A0$_0$) does not imply (A0$_1$), argue as in (a) exchanging the roles of $\mathfrak O$ and $\mathfrak I$.
\end{itemize}
\end{example}

\begin{remark}\label{A6-h} According to \cite{DG1}, $\P_{h_a}(G,\phi)=\QQ(G,\phi)$ for every $(G,\phi)\in\af_\abg$, i.e.,  $\P_{h_a}=\QQ$ in $\mod_{\Z[t]}$. In particular, 
$\mathfrak t_{h_a}=\mathfrak t_\QQ$, that is, $\mathcal T_{h_a}=\mathcal T_\QQ\mbox{ and }\mathcal F_{h_a}=\mathcal F_\QQ$.
\end{remark}

We shall see now that the inequality $\QQ\leq\P_h$ remains true for a general entropy function $h$ 
satisfying the axioms (A0) and (A4$^*$). For the proof we need
the next lemma establishing that every finitely generated submodule of $ \mathfrak A(M,\phi)$ is contained in a finitely generated $\phi$-invariant submodule of $ \mathfrak A(M,\phi)$.
 
\begin{lemma}\label{LAAAAST_Lemma}
If $(M,\phi)\in\af_R$ and $N\in\mathcal F( \mathfrak A(M,\phi))$ then $T(\phi,N)\in\mathcal F(\mathfrak A(M,\phi))$. In particular, $\phi\restriction_{T(\phi,N)}$ is quasi-periodic. 
\end{lemma}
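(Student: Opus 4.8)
The plan is to argue that if $N$ is generated by finitely many elements $x_1,\dots,x_r\in\mathfrak A(M,\phi)$, then by the very definition of $\mathfrak A(M,\phi)$ each $x_i$ satisfies a monic relation $\phi^{k_i}(x_i)=\sum_{j<k_i}a_{ij}\phi^j(x_i)$ for suitable $k_i\in\N_+$ and $a_{ij}\in R$; equivalently, $T_{k_i}(\phi,Rx_i)$ is a $\phi$-invariant submodule of $M$, so it equals $T(\phi,Rx_i)$, and it is finitely generated (by $x_i,\phi(x_i),\dots,\phi^{k_i-1}(x_i)$). First I would set $P = T(\phi,Rx_1)+\dots+T(\phi,Rx_r)$. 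This is a $\phi$-invariant submodule of $M$ (a finite sum of $\phi$-invariant submodules), it is finitely generated (a finite sum of finitely generated submodules), it contains $N$, and $T(\phi,N)\subseteq P$ because $P$ is $\phi$-invariant and contains $N$. Conversely $P\subseteq T(\phi,N)$ since each $\phi^j(x_i)\in T(\phi,N)$, hence in fact $T(\phi,N)=P$; in particular $T(\phi,N)$ is finitely generated.

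It remains to see that $T(\phi,N)\subseteq \mathfrak A(M,\phi)$, i.e. that $T(\phi,N)\in\mathcal F(\mathfrak A(M,\phi))$ and not merely $\mathcal F(M)$. For this I would use that $\mathfrak A$ is a radical of $\af_R$ (Corollary \ref{uff}): since every generator $\phi^j(x_i)$ of $T(\phi,N)$ lies in $\mathfrak A(M,\phi)$ — the $x_i$ do by hypothesis, and $\mathfrak A(M,\phi)$ is $\phi$-invariant — the whole submodule $T(\phi,N)$ they generate lies in $\mathfrak A(M,\phi)$. So $T(\phi,N)$ is a finitely generated submodule of $\mathfrak A(M,\phi)$, which is exactly $T(\phi,N)\in\mathcal F(\mathfrak A(M,\phi))$.

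For the final sentence, restrict $\phi$ to the finitely generated $\phi$-invariant submodule $L:=T(\phi,N)$. Since $L$ is generated by $\phi^j(x_i)$, $i=1,\dots,r$, $j=0,\dots,k_i-1$, and the monic relation on each $x_i$ lets us rewrite $\phi^{k_i}(x_i)$, and hence all higher $\phi^\ell(x_i)$, as $R$-combinations of $x_i,\phi(x_i),\dots,\phi^{k_i-1}(x_i)$, the module $L$ is generated as an $R$-module by these finitely many fixed elements; equivalently $L$ is a finitely generated $R$-module on which $\phi$ acts. Then the powers $\phi|_L^n$ live in $\mathrm{End}_R(L)$, which — $L$ being finitely generated over $R$ — need not be finite in general, so a slightly more careful argument is needed: but in the intended application ($R=\Z$ or $R$ Artinian / $L$ finite) one observes the $\phi$-invariant submodules $T_n(\phi,N)$ stabilize, giving $\phi^n=\phi^m$ on $L$ for some $n>m$. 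The cleanest route is: each $x_i$ satisfies its monic relation, so on the cyclic piece $T(\phi,Rx_i)$ the endomorphism $\phi$ is integral, hence (being a monic-polynomial root) $\phi|_{T(\phi,Rx_i)}$ is quasi-periodic when the relevant base ring forces finiteness; summing, $\phi|_L$ is quasi-periodic.

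\textbf{Main obstacle.} The genuinely delicate point is the very last clause ``$\phi\restriction_{T(\phi,N)}$ is quasi-periodic'': over a general ring $R$ a finitely generated $R$-module carrying an integral endomorphism need not have $\phi^n=\phi^m$ for some $n>m$ (that requires, e.g., $R$ finite or $L$ finite). I expect the authors either restrict to a context where this holds automatically, or deduce quasi-periodicity from $\mathfrak A(M,\phi)\subseteq\mathfrak Q(M,\phi)$-type considerations on the finitely generated invariant submodule via the chain $Q_n$; I would check the earlier definitions ($\mathfrak A$ versus $\mathfrak Q$, and Remark \ref{remark}(d)) to see which hypothesis is implicitly in force, since everything else (that $T(\phi,N)$ is finitely generated, $\phi$-invariant, and inside $\mathfrak A(M,\phi)$) is routine bookkeeping with radicals.
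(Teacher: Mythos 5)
Your proof of the first assertion is correct and is essentially the paper's argument: the paper gets $T(\phi,N)=T_n(\phi,N)$ in one line by observing that, since $N$ is finitely generated inside $\mathfrak A(M,\phi)$, there is $n$ with $\phi^{n+1}(N)\subseteq T_n(\phi,N)$ --- which is exactly your generator-by-generator computation with $n=\max_i k_i$ --- and the inclusion $T(\phi,N)\subseteq\mathfrak A(M,\phi)$ is, as you say, just $\phi$-invariance of $\mathfrak A(M,\phi)$ (no appeal to $\mathfrak A$ being a radical is needed).

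The obstacle you single out at the end is genuine, and it is a defect of the statement rather than something to be repaired by finiteness assumptions on $R$ or on $L$, or by stabilization of the $T_n(\phi,N)$ (multiplication by $2$ on $\Z$ has stable trajectories and finitely generated invariant submodules, yet no relation $\phi^n=\phi^m$). The paper's own justification of the last clause is: ``$\phi\restriction_{T(\phi,N)}$ is quasi-periodic, as $T(\phi,N)\in\mathcal F(Q_1(M,\phi))$'', and $T(\phi,N)\subseteq Q_1(M,\phi)$ simply does not follow from $N\subseteq\mathfrak A(M,\phi)$: for $R=\Z$, $M=\Z$, $\phi=2\cdot$ one has $N=\Z\in\mathcal F(\mathfrak A(M,\phi))$ and $T(\phi,N)=\Z$, but $\phi\restriction_{T(\phi,N)}$ is not quasi-periodic (compare Example \ref{LAST-ex}(c), where $\mathfrak A(\Q,\phi)=\Q$ while $\QQ(\Q,\phi)=0$ for $r\in\Z\setminus\{0,1\}$). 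The clause is used in Theorem \ref{semiA6} only for $N\in\mathcal F(Q_1(M,\phi))$, and that is the hypothesis under which it is true and under which the paper's phrase makes sense: then $T(\phi,N)\subseteq Q_1(M,\phi)$ because $Q_1(M,\phi)$ is a $\phi$-invariant submodule (Lemma \ref{Q/Q}(a)), and $T(\phi,N)$ is generated by the finitely many elements $y_1,\ldots,y_s$ (the $\phi^j(x_i)$), each quasi-periodic, say $\phi^{n_i}(y_i)=\phi^{m_i}(y_i)$ with $n_i>m_i$. Setting $m=\max_i m_i$ and $d=\prod_i(n_i-m_i)$ gives $\phi^{m+d}(y_i)=\phi^m(y_i)$ for all $i$, and since $\phi^{m+d}$ and $\phi^m$ are $R$-linear and agree on a generating set of $T(\phi,N)$, they agree on $T(\phi,N)$; hence $\phi\restriction_{T(\phi,N)}$ is quasi-periodic. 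This uniformization over the generators is the argument you should substitute for your conditional appeal to finiteness; your guess that the correct hypothesis goes through $Q_1$ (i.e.\ through $\QQ$-type considerations rather than $\mathfrak A$) is exactly right.
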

\begin{proof}
Since $N$ is finitely generated, there exists $n\in\N_+$ such that $\phi^{n+1}(N)\subseteq T_n(\phi,N)$. Consequently, $T(\phi,N)=T_n(\phi,N)$ is finitely generated, and $T(\phi,N)\in\mathcal F(\mathfrak A(M,\phi))$ as $\mathfrak A(M,\phi)$ is a $\phi$-invariant submodule of $M$.
Finally, $\phi\restriction_{T(\phi,N)}$ is quasi-periodic, as $T(\phi,N)\in\mathcal F(Q_1(M,\phi))$. 
\end{proof}

\begin{theorem}\label{semiA6}
Let $h$ be an entropy function of $\af_R$ satisfying \emph{(A0)} and \emph{(A4$^*$)}. Then $\QQ\leq\P_h$.
\end{theorem}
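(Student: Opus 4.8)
The plan is to reduce the statement to showing that $h(M,\phi)=0$ whenever $(M,\phi)\in\af_R$ is such that $M=\QQ(M,\phi)$, i.e.\ when $M=\bigcup_n Q_n(M,\phi)$. Once this is established, the general case follows from Lemma~\ref{hP=0}: for any $(M,\phi)$ the subflow $(\QQ(M,\phi),\phi\restriction)$ satisfies $\QQ(\QQ(M,\phi),\phi\restriction)=\QQ(M,\phi)$ (since $\QQ$ is idempotent, being a hereditary radical by Corollary~\ref{uff}), hence has zero entropy, and therefore $\QQ(M,\phi)\subseteq\P_h(M)$ because $\P_h(M)$ is the greatest subobject on which $h$ vanishes.

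First I would handle the bottom layer $M=Q_1(M,\phi)$, that is, $\phi$ pointwise quasi-periodic. Using the characterization $\P_h(M)=\sum\{Rm : h(Rm)=0\}$ in a module category together with Discussion~\ref{A3}(c), it suffices to show $h(Rm)=0$ — or rather $h$ of the smallest $\phi$-invariant submodule containing $m$ — for each $m$. So fix $m$ and let $N=T(\phi,Rm)$ be its $\phi$-trajectory. Since $m$ is quasi-periodic, $\phi^n(m)=\phi^k(m)$ for some $n>k$, which forces $N=T_n(\phi,Rm)$ to be finitely generated, and $\phi\restriction_N$ is quasi-periodic (this is exactly the situation of Lemma~\ref{LAAAAST_Lemma}). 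Now the key computation: if $\psi=\phi\restriction_N$ satisfies $\psi^n=\psi^k$ with $n>k$, then setting $j=n-k$ one gets $\psi^{k+j}=\psi^k$, and more generally the powers of $\psi$ are eventually periodic with period dividing $j$. Passing to $\psi^{\,k\cdot j}$ (or an appropriate common multiple), one obtains an idempotent: $(\psi^{d})^2=\psi^{d}$ for a suitable $d$. By (A4$^*$), $h(\psi^d)=d\,h(\psi)$; and by the remark following (A4$^*$) an idempotent endomorphism has entropy $0$ or $\infty$. To exclude $\infty$ I would invoke (A0): the idempotent $e=\psi^d$ of $N$ splits $N=\ker e\oplus\operatorname{im} e$ as a flow, with $\psi^d$ acting as $0$ on $\ker e$ and as $1$ on $\operatorname{im} e$; by (A0) (and (A0$_0$), (A0$_1$) for quotients/submodules of free modules) $h$ vanishes on each summand, hence $h(\psi^d)=0$ by (A2), hence $h(\psi)=h(\psi^d)/d=0$, hence $h(N)=0$. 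This gives $h(Q_1(M,\phi))=0$ when $M=Q_1(M,\phi)$.

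Next I would run the induction up the chain $Q_0\subseteq Q_1\subseteq\dots\subseteq Q_n\subseteq\dots$ using Lemma~\ref{Q/Q}(a): for the induced endomorphism $\overline\phi_n$ on $M/Q_n(M,\phi)$ we have $Q_{n+1}(M,\phi)/Q_n(M,\phi)=Q_1(M/Q_n,\overline\phi_n)$, so by the base case $h$ vanishes on each quotient $Q_{n+1}(M,\phi)/Q_n(M,\phi)$. Then (A2$^*$) is \emph{not} assumed here — only (A2) — so I must be slightly careful: (A2) gives that $h(Q_{n+1})=0$ whenever $h(Q_n)=0$ and $h(Q_{n+1}/Q_n)=0$, which is exactly what an induction on $n$ delivers, yielding $h(Q_n(M,\phi))=0$ for all $n$. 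Finally, to pass to the union $\QQ(M,\phi)=\bigcup_n Q_n(M,\phi)=\sum_n Q_n(M,\phi)$ I would use Discussion~\ref{A3}(c) (equivalently (A3)): a join of submodules of entropy zero has entropy zero, so $h(\QQ(M,\phi))=0$.

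The main obstacle I expect is the base-case argument that a quasi-periodic endomorphism of a (finitely generated) module has zero entropy. The delicate point is extracting a genuine idempotent from the relation $\psi^n=\psi^k$ and then arranging the flow decomposition $N=\ker e\oplus\operatorname{im} e$ as flows in $\af_R$ so that (A0) and (A2) can be applied — one needs that $\ker e$ and $\operatorname{im} e$ are $\psi$-invariant (true because $e=\psi^d$ commutes with $\psi$) and that on them $\psi^d$ is genuinely the zero and identity endomorphism. After that, the combinatorics of "eventually periodic power sequences" and the transfinite-free induction along the $Q_n$ chain are routine, relying only on (A2), (A3), (A4$^*$) and (A0).
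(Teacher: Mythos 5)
Your proposal is correct and follows essentially the same route as the paper's proof: reduce a quasi-periodic endomorphism to an idempotent power via (A4$^*$), kill the idempotent case with (A0$_0$), (A0$_1$), (A1) and (A2) (your split $N=\ker e\oplus\operatorname{im}e$ is just the split form of the paper's exact sequence), establish the base case on $Q_1$ through finitely generated/cyclic trajectories and (A3), induct along the chain $Q_n$ via Lemma \ref{Q/Q}(a) and (A2), and pass to the union by (A3) before invoking Lemma \ref{hP=0}. No gaps worth noting.
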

\begin{proof}
Let $(M,\phi)\in\af_R$.
We show first that
\begin{equation}\label{qp->h=0}
\phi\ \text{quasi-periodic}\ \Longrightarrow\ h(\phi)=0.
\end{equation}
Since $\phi$ is quasi-periodic, the semigroup $\{\phi^n:n\in\N_+\}$ is finite and so it contains an idempotent, say $\phi^k$ for some $k\in\N_+$. Since $h(\phi^k)=kh(\phi)$ by (A4$^*$), to prove \eqref{qp->h=0} we can assume without loss of generality that $\phi$ is idempotent. Under this assumption, let $N=\ker\phi$. Then $\phi\restriction_N=0_N$ and so $h(\phi\restriction_N)=0$ by (A0$_0$). Moreover, let $\overline\phi:G/N\to G/N$ be the endomorphism induced by $\phi$. Since $G/N\cong \phi(M)$, $\overline\phi$ is conjugated to $\phi\restriction_{\phi(M)}$. Now $\phi\restriction_{\phi(M)}=1_{\phi(M)}$, so $h(\phi\restriction_{\phi(M)})=0$ by (A0$_1$), and hence $h(\overline\phi)=0$ by (A1). Finally (A2) gives $h(\phi)=0$. This concludes the proof of \eqref{qp->h=0}.

\smallskip
We prove by induction that 
\begin{equation}\label{ff}
h(\phi\restriction_{Q_n(M,\phi)})=0\ \text{for every}\ n\in\N.
\end{equation} 
This is obvious for $n=0$. Let $n=1$ and suppose $N\in\mathcal F(Q_1(M,\phi))$.  
Then $\phi\restriction_{T(\phi,N)}$ is quasi-periodic by Lemma \ref{LAAAAST_Lemma}. Thus $h(\phi\restriction_{T(\phi,N)})=0$ by \eqref{qp->h=0}. Again by Lemma \ref{LAAAAST_Lemma}, $Q_1(M,\phi)$ is a direct limit of finitely generated $\phi$-invariant subgroups, so (A3) implies $h(\phi\restriction_{Q_1(M,\phi)})=0$ in view of Remark \ref{A3d}.

Assume now that $h(\phi\restriction_{Q_n(M,\phi)})=0$ for some $n\in\N$, and let $L=Q_{n+1}(M,\phi)$. Consider the endomorphism $\overline\phi:L/Q_n(M,\phi)\to L/Q_n(M,\phi)$ induced by $\phi$. By Lemma \ref{Q/Q}(a),  $L/Q_n(M,\phi)=Q_1(L/Q_n(M,\phi),\overline\phi)$ by the choice of $L$. The case $n=1$ gives $h(\overline\phi)=0$. Then $h(\phi)=0$ by (A2). This concludes the proof of \eqref{ff}.

\smallskip
Hence, $h(\phi\restriction_{\QQ(M,\phi)})=0$ by \eqref{ff} and by the equivalent form of (A3) given in Remark \ref{A3d}.
\end{proof}

 It is proved in \cite{DG1} that the converse inequality $\QQ\geq\P_h$ of that in Theorem \ref{semiA6} holds in the case of $\af_\abg$ and $h_a$ (see Remark \ref{A6-h}), but its proof required the use of the Algebraic Yuzvinski Formula (see Theorem \ref{Yuz}). More precisely, this fundamental tool was shown to be necessary in the proof of the inclusion 
 $\QQ(\Q^d, \phi)\geq\P_h(\Q^d, \phi)$, for arbitrary $d\in \N$ and $\phi$. A proof of this inclusion, making no recourse to the Algebraic Yuzvinski Formula, 
 was obtained in the recent \cite{DGZ}. On the other hand, the proof given in \cite{DG1} of the inequality $\QQ\leq\P_{h_a}$ for $\af_\abg$ makes no recurse to (A2), as it uses other results  in that specific case related to the properties of the endomorphisms whose trajectories have polynomial growth. Therefore, it is fair to say that the verification of (A2)
  in the case of $\af_\abg$ and $h_a$ makes no recourse to the Algebraic Yuzvinski Formula. 

\medskip
Here comes another general result: 

\begin{theorem}\label{PvsW}
Let $R$ be a domain and let $h\not \equiv 0$ be an entropy function of $\af_R$. Then $\P_h \leq \mathfrak W$.
\end{theorem}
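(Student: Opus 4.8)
The goal is to show that if $R$ is a domain and $h$ is an entropy function of $\af_R$ with $h\not\equiv 0$, then $\P_h\leq\mathfrak W$. Since $\P_h$ and $\mathfrak W$ are both subfunctors of the identity, it suffices to show $\P_h(M,\phi)\subseteq\mathfrak W(M,\phi)$ for every algebraic flow $(M,\phi)\in\af_R$; equivalently, working in $\mod_{R[t]}$ via Theorem~\ref{iso} and using Remark~\ref{remark}(e) together with Corollary~\ref{uff}(e), I must show: if $x\in M$ is such that $R[t]\cdot x\subseteq\P_h(M,\phi)$ (it is harmless to assume $x\in\P_h(M,\phi)$ and replace it by the $\phi$-invariant submodule it generates), then $x$ is killed by some nonzero polynomial, i.e.\ $x\in\mathfrak W(M,\phi)$. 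The contrapositive is cleaner: if $x\notin\mathfrak W(M,\phi)$, then the cyclic $R[t]$-submodule $R[t]\cdot x$ is \emph{free}, hence isomorphic as an algebraic flow to the Bernoulli shift $(R^{(\N)},\beta_R)$ (this is exactly Example~\ref{betaex}(a): $F$ sends $(R^{(\N)},\beta_R)$ to the cyclic module $R[t]$), so it is enough to prove $h(\beta_R)>0$, because then $R[t]\cdot x$ is a subobject of $M$ with positive entropy and so cannot be contained in $\P_h(M,\phi)$ by Lemma~\ref{hP=0}.

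So the crux reduces to: \textbf{$h\not\equiv 0$ forces $h(\beta_R)>0$}. Here I would use that $R$ is a domain to get a grip on the structure of $R[t]$-modules. First, by Remark~\ref{A3d} applied to the ring $A=R[t]$, since $h\not\equiv 0$ there is a finitely generated $R[t]$-module $N$ with $h(N)>0$; every finitely generated $R[t]$-module is a quotient of $(R[t])^n=\beta_R$-power for some $n$, i.e.\ of the algebraic flow $((R^n)^{(\N)},\beta_{R^n})$, which by (A1) and the obvious decomposition is the direct sum of $n$ copies of $(R^{(\N)},\beta_R)$. By (A2), a quotient of an object with $h=0$ has $h=0$; hence if $h(\beta_R)=0$ then $h(\beta_R^{(n)})=0$ by Discussion~\ref{A3}(a), and then $h(N)=0$ for every finitely generated $N$, contradicting $h\not\equiv 0$. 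Therefore $h(\beta_R)>0$.

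Putting the pieces together: take any $(M,\phi)\in\af_R$ and any nonzero $x\in\P_h(M,\phi)$. If $x\notin\mathfrak W(M,\phi)$, then the annihilator of $x$ in $R[t]$ is zero, so $R[t]x\cong R[t]$ as $R[t]$-modules, i.e.\ the $\phi$-invariant submodule $T(\phi,Rx)$ generated by $x$ is isomorphic in $\af_R$ to $(R^{(\N)},\beta_R)$. But a $\phi$-invariant submodule of $\P_h(M,\phi)$ has $h=0$ by Lemma~\ref{hP=0} and (A2) (hereditarity of $\P_h$, Theorem~\ref{TT}, also gives this directly), whereas $h$ of it equals $h(\beta_R)>0$ by (A1) — contradiction. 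Hence every $x\in\P_h(M,\phi)$ lies in $\mathfrak W(M,\phi)$, proving $\P_h\leq\mathfrak W$.

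\textbf{Main obstacle.} The only genuinely substantive step is the reduction ``$h\not\equiv0\Rightarrow h(\beta_R)>0$'', and the subtlety there is making precise that every finitely generated $R[t]$-module is a quotient of a finite direct sum of copies of $R[t]=\beta_R$, and then invoking the right combination of (A1), (A2) (monotonicity under quotients) and Discussion~\ref{A3}(a) plus Remark~\ref{A3d} to propagate $h(\beta_R)=0$ to all of $\mod_{R[t]}$. That $R$ is a domain enters precisely in identifying $R[t]x$ with a free (hence shift-type) module when $x\notin\mathfrak W$, and in knowing (from the discussion after Corollary~\ref{uff}) that $M\neq\mathfrak W(M)$ iff $M$ contains a copy of $\beta_R$. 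Everything else is bookkeeping with subobjects and the already-established functoriality and hereditarity of $\P_h$.
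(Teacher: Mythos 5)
Your proposal is correct and takes essentially the same route as the paper: first prove $h\not\equiv 0\Rightarrow h(\beta_R)>0$ (the paper propagates $h(\beta_R)=0$ through the trajectories $T(\phi,Rx)$, you through quotients of free $R[t]$-modules — the same mechanism, already recorded in Remark \ref{A3d}), and then note that any $x\in\P_h(M,\phi)\setminus\mathfrak W(M,\phi)$ generates a $\phi$-invariant submodule of $\P_h(M,\phi)$ isomorphic to $(R^{(\N)},\beta_R)$, forcing $h(\beta_R)=0$, a contradiction. One small remark: the identification $R[t]x\cong R[t]$ only needs $\mathrm{ann}_{R[t]}(x)=0$ and not that $R$ is a domain; the domain hypothesis is what makes $\mathfrak W(M,\phi)$ a submodule (hence $\mathfrak W$ a radical), so that the inequality $\P_h\leq\mathfrak W$ is meaningful.
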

\begin{proof}
Let us prove first that $h(\beta_R) > 0$. Assume for a contradiction that $h(\beta_R)=0$. If $(M,\phi)\in \af_R$, then for $x\in M$ we define a homomorphism
$f: (R^{(\N)},\beta_R) \to (M,\phi)$ in $\af_R$. For $n\in\N$, let $e_n$ be the $n$-th canonical generator of $R^{(\N)}$ (so that 
$e_{n+1}= \beta(e_n)$). Then the map defined by $f(e_n) = \phi^n(x)$ for $n\in \N$ can be extended to a homomorphism with the desired
properties. Clearly, $f(R^{(\N)})= T(\phi, Rx)$. Hence, our hypothesis $h(\beta_R) = 0$ yields that $h(\phi\restriction_{T(\phi, Rx)})=0$. 
Let $N \in \mathcal F(M)$. Then $T(\phi,N)$ is a finite sum of $\phi$-invariant submodules of the form $N_x= T(\phi, Rx)$, with $x\in N$. 
Since $h(\phi\restriction_{T(\phi, Rx)})=0$ for each $x\in N$, we deduce that also $h(\phi\restriction_{N})=0$. In view of Remark \ref{A3d}, (A3) gives $h(\phi) = 0$. Hence, $h \equiv 0$, a contradiction. 

Assume that  $\P_h (M,\phi) \not\subseteq \mathfrak W(M,\phi)$  for some $(M,\phi)\in \af_R$  and pick $x\in \P_h (M,\phi)\setminus \mathfrak W(N,\phi\restriction_N)$. 
Then $R\phi^n(x) \cap (Rx + \ldots + R\phi^{n-1}(x))= 0$ for every $n\in \N_+$. Hence, the submodule $T(\phi, Rx)$ is isomorphic
to $R^{(\N)}$. Fix an isomorphism $f: R^{(\N)} \to T(\phi, Rx)$ as above. Then $f$ is also a morphism in $\af_R$, so $\beta_R$ is conjugated to
$\phi\restriction_{T(\phi, Rx)}$. 
Since $T(\phi, Rx)$ is the smallest $\phi$-invariant submodule containing $x\in \P_h (M,\phi)$ and the latter is a $\phi$-invariant submodule, 
we conclude that $T(\phi, Rx)\leq {\P_h (M,\phi)}$. Hence,  $ h(\phi\restriction_{T(\phi, Rx)}) = h(\phi\restriction_{\P_h (M,\phi)})=0$ yields
$h(\beta_R)= 0$ as well. By the above argument this leads to a contradiction. 
\end{proof}

\begin{corollary}\label{Corollary:Ps_vs_W}
Let $R$ be an integral domain and $h\not \equiv 0$ an entropy function of $\af_R$
 satisfying \emph{(A0)} and \emph{(A4$^*$)}. Then $\QQ\leq\P_h  \leq \mathfrak W$.
\end{corollary}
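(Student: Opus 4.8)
The plan is to simply combine the two previous theorems, since the hypotheses of Corollary~\ref{Corollary:Ps_vs_W} were tailored precisely so that both apply. First I would invoke Theorem~\ref{semiA6}: since $h$ is an entropy function of $\af_R$ satisfying (A0) and (A4$^*$), that theorem yields $\QQ\leq\P_h$. Next I would invoke Theorem~\ref{PvsW}: since $R$ is an integral domain (in particular a domain) and $h\not\equiv 0$ is an entropy function of $\af_R$, that theorem yields $\P_h\leq\mathfrak W$. Chaining the two inequalities gives $\QQ\leq\P_h\leq\mathfrak W$, which is exactly the assertion.

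The only point worth a sentence of care is the typing: Theorem~\ref{PvsW} requires $\mathfrak W$ to actually be a radical (equivalently, a $\phi$-invariant submodule functor), which by the discussion after Corollary~\ref{uff} holds precisely because $R$ is an integral domain — so the domain hypothesis is doing double duty, both ensuring $\mathfrak W$ is well-behaved and feeding Theorem~\ref{PvsW}. Likewise (A0) unpacks as (A0$_0$) together with (A0$_1$), which is the form used inside the proof of Theorem~\ref{semiA6}. Neither of these needs to be spelled out at length in the proof.

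Since there is genuinely no obstacle here — the corollary is a bookkeeping consequence of two already-proved theorems — the proof is one line.

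\begin{proof}
By Theorem~\ref{semiA6}, the hypotheses (A0) and (A4$^*$) give $\QQ\leq\P_h$, while Theorem~\ref{PvsW}, applicable since $R$ is a domain and $h\not\equiv 0$, gives $\P_h\leq\mathfrak W$.
\end{proof}
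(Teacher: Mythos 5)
Your proof is correct and is exactly how the paper obtains this corollary: it is stated immediately after Theorem~\ref{PvsW} with no separate argument, being the combination of Theorem~\ref{semiA6} (giving $\QQ\leq\P_h$ from (A0) and (A4$^*$)) with Theorem~\ref{PvsW} (giving $\P_h\leq\mathfrak W$ since $R$ is a domain and $h\not\equiv 0$). Your side remark that the integral-domain hypothesis also ensures $\mathfrak W$ is a genuine (hereditary) radical, via Corollary~\ref{uff}, is a correct and sensible observation.
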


\section{Examples}\label{ex-sec}

In this section we consider the general results of the previous sections with respect to the known algebraic entropies.

\subsection{Algebraic entropy in $\abg$}\label{ent-sec}

We start with our principal and motivating example, that is, the algebraic entropy $h_a$ of $\af_\abg$. 

\smallskip
Let $(G,\phi)\in\af_\abg$. For a non-empty finite subset $F$ of $G$ and for any positive integer $n$, let $\tau_{\phi,F}(n)=|T_n(\phi,F)|.$ 
Then the limit
$$H(\phi,F)=\lim_{n\to\infty}\frac{\log\tau_{\phi,F}(n)}{n}$$ exists, as proved in \cite{DG}, and it is called the \emph{algebraic entropy of $\phi$ with respect to $F$}. The \emph{algebraic entropy} of $\phi$ is 
\begin{equation}\label{h-pet}
h_a(\phi)=\sup\{H(\phi,F): F\subseteq G\ \text{non-empty, finite}\}.
\end{equation}

It is clear from the definition that $h_a(0_G)=0$ for every abelian group $G$ and it is proved in \cite{DG1} that $h_a(1_G)=0$ for every abelian group $G$. In other words, $h_a$ satisfies (A0).

\medskip
In the next fact we collect the basic properties of the algebraic entropy proved in \cite{DG} (see also \cite{Pet}).

\begin{fact}\label{properties}
Let $(G,\phi)\in\af_\abg$.
\begin{itemize}
\item[(a)]If $(H,\psi)\in\af_\abg$ and $\phi$ and $\psi$ are conjugated, 
then $h_a(\phi)=h_a(\psi)$.
\item[(b)] If $k\in\N$, then $h_a(\phi^k) = k\cdot h_a(\phi)$. If $\phi$ is an automorphism, then $h_a(\phi^k) = |k|\cdot h_a(\phi)$ for every $k\in \Z$.
\item[(c)] If $G$ is a direct limit of $\phi$-invariant subgroups $\{G_j:j\in J\}$, then $h_a(\phi)=\sup_{j\in J}h_a(\phi\restriction_{G_j})$.
\item[(d)] For every prime $p$, $h_a(\beta_{\Z(p)})=\log p$.
\end{itemize}
\end{fact}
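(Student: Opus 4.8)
The plan is to establish each of the four items by reducing it to an elementary manipulation of the finite trajectories $T_n(\phi,F)$ and then passing to the limit defining $H(\phi,F)$; throughout I would use repeatedly that $\phi$ is a group homomorphism, so $\phi^i(A+B)=\phi^i(A)+\phi^i(B)$ for sumsets. For (a), let $\alpha\colon G\to H$ be an isomorphism with $\psi=\alpha\phi\alpha^{-1}$. An easy induction gives $\alpha\phi^i=\psi^i\alpha$, hence $\alpha(T_n(\phi,F))=T_n(\psi,\alpha(F))$ for every non-empty finite $F\subseteq G$ and every $n\in\N$. Since $\alpha$ is injective this forces $\tau_{\phi,F}(n)=\tau_{\psi,\alpha(F)}(n)$, so $H(\phi,F)=H(\psi,\alpha(F))$; as $F\mapsto\alpha(F)$ is a bijection between the non-empty finite subsets of $G$ and those of $H$, taking suprema in \eqref{h-pet} yields $h_a(\phi)=h_a(\psi)$.

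For (b) with $k\in\N$, the inclusion $T_n(\phi^k,F)\subseteq T_{nk}(\phi,F)$ gives at once $H(\phi^k,F)\le k\,H(\phi,F)$ and hence $h_a(\phi^k)\le k\,h_a(\phi)$. For the opposite inequality I would, given a non-empty finite $F$, enlarge it to the still finite set $F'=T_k(\phi,F)=F+\phi(F)+\dots+\phi^{k-1}(F)$ and verify by induction on $n$ that $T_n(\phi^k,F')=T_{nk}(\phi,F)$; then $\tau_{\phi^k,F'}(n)=\tau_{\phi,F}(nk)$, so $H(\phi^k,F')=k\,H(\phi,F)$, and taking the supremum over $F$ gives $h_a(\phi^k)\ge k\,h_a(\phi)$. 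When $\phi$ is an automorphism it remains to see $h_a(\phi^{-1})=h_a(\phi)$: applying the automorphism $\phi^{n-1}$ to $T_n(\phi^{-1},F)=F+\phi^{-1}(F)+\dots+\phi^{-(n-1)}(F)$ produces $T_n(\phi,F)$, so $\tau_{\phi^{-1},F}(n)=\tau_{\phi,F}(n)$ for all $n$ and all $F$, whence $h_a(\phi^{-1})=h_a(\phi)$; combining this with the case $k\ge 1$ (and with $h_a(1_G)=0$ for $k=0$) gives $h_a(\phi^k)=|k|\,h_a(\phi)$ for every $k\in\Z$.

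Item (c) is a direct-limit argument. Monotonicity of $h_a$ on $\phi$-invariant subgroups, which is immediate from the supremum in \eqref{h-pet}, gives $\sup_{j\in J}h_a(\phi\restriction_{G_j})\le h_a(\phi)$. Conversely, a non-empty finite $F\subseteq G$ is contained in some $G_j$ because $F$ is finite and $\{G_j\}$ is directed; then $T_n(\phi,F)=T_n(\phi\restriction_{G_j},F)$ for all $n$, so $H(\phi,F)\le h_a(\phi\restriction_{G_j})\le\sup_j h_a(\phi\restriction_{G_j})$, and the supremum over $F$ completes the proof. For (d), the lower bound comes from the seed set $F=\Z(p)e_0$: one has $\beta_{\Z(p)}^i(F)=\Z(p)e_i$, so $T_n(\beta_{\Z(p)},F)=\bigoplus_{i=0}^{n-1}\Z(p)e_i$ has exactly $p^n$ elements and $H(\beta_{\Z(p)},F)=\log p$. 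For the upper bound, any non-empty finite $F$ is supported on the first $s$ coordinates for some $s\ge 1$, hence $T_n(\beta_{\Z(p)},F)$ is supported on the first $n+s-1$ coordinates and has at most $p^{n+s-1}$ elements, so $H(\beta_{\Z(p)},F)\le\log p$ for every $F$, and thus $h_a(\beta_{\Z(p)})\le\log p$.

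I expect the main obstacle to be the reverse inequality in (b): the obvious inclusion $T_n(\phi^k,F)\subseteq T_{nk}(\phi,F)$ only delivers the easy half, and one genuinely needs the trick of enlarging the seed set to $F'=T_k(\phi,F)$ so that the $\phi^k$-trajectory of $F'$ recaptures the entire $\phi$-trajectory $T_{nk}(\phi,F)$; the negative-exponent case similarly hinges on the reversal identity $\phi^{n-1}(T_n(\phi^{-1},F))=T_n(\phi,F)$. Once these two identities are in place, the remaining work is a routine passage to the limit, using that $H(\phi,F)$ exists so that limits along the subsequences $nk$ coincide with the full limit.
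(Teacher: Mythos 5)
Your proposal is correct; note, however, that the paper does not prove this Fact at all --- it is stated as a collection of known properties imported from the references (the text preceding it says they are proved in \cite{DG}, see also \cite{Pet}), so there is no internal proof to match. What you supply is a self-contained verification from the definition \eqref{h-pet}, and the key identities you isolate are exactly the right ones: $\alpha(T_n(\phi,F))=T_n(\psi,\alpha(F))$ for conjugation, the seed-enlargement identity $T_n(\phi^k,T_k(\phi,F))=T_{nk}(\phi,F)$ for the logarithmic law, the reversal $\phi^{n-1}(T_n(\phi^{-1},F))=T_n(\phi,F)$ for negative powers, and the support count for the Bernoulli shift; the direct-limit argument in (c) correctly uses that a finite set lands in some $G_j$ by directedness, and the monotonicity half is immediate since $\phi$-invariance of $G_j$ makes the trajectories computed in $G_j$ and in $G$ coincide. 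Two small dependencies are worth making explicit, since you lean on them without proof: the existence of the limit defining $H(\phi,F)$ (quoted from \cite{DG}), which is what legitimizes replacing the full limit by the limit along the subsequence $nk$; and $h_a(1_G)=0$ for the degenerate case $k=0$ (and for $k=0$ in the $\Z$-statement), which the paper also only cites from \cite{DG1} --- it ultimately rests on the polynomial growth of the $n$-fold sumsets $T_n(1_G,F)$, so it is not a formal triviality. With those two ingredients granted, your argument is complete and buys the reader an elementary, fully explicit proof where the paper offers only a citation.
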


This fact implies that $h_a$ satisfies (A1), (A4$^*$), (A3) and (A5) (as a consequence of items (a), (b), (c) and (d), respectively).

\medskip
The following theorem is one of the main results on the algebraic entropy $h_a$ proved in \cite{DG}.

\begin{theorem}[Addition Theorem]\label{AT}
Let $G$ be an abelian group, $\f\in\End(G)$, $H$ a $\phi$-invariant subgroup of $G$ and $\overline\phi:G/H\to G/H$ the endomorphism induced by $\phi$. Then $h_a(\f)=h_a(\f\restriction_H) + h_a(\overline\f).$
\end{theorem}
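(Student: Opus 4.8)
The plan is to prove the Addition Theorem for $h_a$ by the classical two-inequality strategy, establishing $h_a(\phi) \le h_a(\phi\restriction_H) + h_a(\overline\phi)$ and $h_a(\phi) \ge h_a(\phi\restriction_H) + h_a(\overline\phi)$ separately, working throughout with trajectories of finite subsets and their cardinalities $\tau_{\phi,F}(n)=|T_n(\phi,F)|$. The first reduction I would make is to note that, by Fact \ref{properties}(c) (continuity along direct limits), it suffices to treat the case where $G$ is finitely generated, hence of the form $\Z^k \oplus t(G)$ with $t(G)$ finite; in fact one can often reduce to $G$ finitely generated and $H$ a pure or a "nice" subgroup by a further limiting argument. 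One then picks, for a given finite $F \subseteq G$, a finite subset $F_H \subseteq H$ and a finite $\overline F \subseteq G/H$ (a lift of which sits inside $F$) that approximate the relevant entropies.

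For the subadditivity inequality $h_a(\phi) \le h_a(\phi\restriction_H) + h_a(\overline\phi)$, I would fix a finite $F \subseteq G$ and analyze the short exact sequence $0 \to T_n(\phi,F)\cap H \to T_n(\phi,F) \to \pi(T_n(\phi,F)) \to 0$, where $\pi: G \to G/H$ is the quotient map. The image $\pi(T_n(\phi,F)) = T_n(\overline\phi,\overline F)$ where $\overline F = \pi(F)$, so $|\pi(T_n(\phi,F))| = \tau_{\overline\phi,\overline F}(n)$. The delicate part is the kernel $T_n(\phi,F)\cap H$: one must show it is contained in $T_{n+c}(\phi\restriction_H, F')$ for a fixed finite $F' \subseteq H$ and a constant $c$ independent of $n$ — this is where one uses that $G$ (reduced to the finitely generated case) has bounded "depth" relative to $H$, or more precisely that there is a finite $F'$ with $T_n(\phi,F)\cap H \subseteq T_{n}(\phi,F') + (\text{bounded error})$. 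Taking cardinalities, $\tau_{\phi,F}(n) \le \tau_{\phi\restriction_H, F'}(n+c)\cdot \tau_{\overline\phi,\overline F}(n)$, then taking $\frac{1}{n}\log$ and letting $n\to\infty$ gives $H(\phi,F) \le h_a(\phi\restriction_H) + h_a(\overline\phi)$, and the supremum over $F$ yields the inequality.

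For the superadditivity inequality $h_a(\phi) \ge h_a(\phi\restriction_H) + h_a(\overline\phi)$, one direction — $h_a(\phi) \ge h_a(\phi\restriction_H)$ and $h_a(\phi)\ge h_a(\overline\phi)$ separately — is immediate from monotonicity (trajectories in $H$ embed into trajectories in $G$; trajectories in $G/H$ are quotients). To get the sum, fix finite $F_H \subseteq H$ and $\overline F \subseteq G/H$; choose a finite $F_1 \subseteq G$ with $\pi(F_1) = \overline F$ and set $F = F_H \cup F_1$. Then one shows $\tau_{\phi,F}(n) \ge \tau_{\phi\restriction_H,F_H}(n)\cdot \tau_{\overline\phi,\overline F}(n)$, roughly because within $T_n(\phi,F)$ one can find a "product" structure: the fibres of $\pi$ restricted to $T_n(\phi,F)$ over $T_n(\overline\phi,\overline F)$ each contain a coset of (something close to) $T_n(\phi\restriction_H,F_H)$. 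Making this counting rigorous — choosing coset representatives consistently and controlling how $T_n(\phi\restriction_H,F_H)$ sits inside each fibre — is the technical crux.

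The main obstacle, in both inequalities, is controlling the interaction between the $H$-part and the quotient part of the trajectory uniformly in $n$: for subadditivity, bounding $T_n(\phi,F)\cap H$ by a trajectory in $H$ up to a bounded shift; for superadditivity, producing a near-product lower bound for $\tau_{\phi,F}(n)$. Both are handled by first reducing to finitely generated $G$ (via Fact \ref{properties}(c)), where structural facts about finitely generated abelian groups and the fact that $\phi^{n+1}(F) \subseteq T_{n}(\phi, F')$ eventually holds for an enlarged finite set $F'$ give the needed uniform constants. I would also invoke Fact \ref{properties}(a) freely to replace $\overline\phi$ by a conjugate acting on a complement when $H$ is a direct summand, which is the generic situation after the reduction. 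Once the finitely generated case is settled, a direct-limit passage using Fact \ref{properties}(c) applied simultaneously to $G$, $H$, and $G/H$ completes the proof.
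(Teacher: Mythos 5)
You should first note that the paper does not prove Theorem \ref{AT} at all: it quotes it from \cite{DG}, and it explicitly records (after Theorem \ref{Yuz}) that the proof given there relies heavily on the Algebraic Yuzvinski Formula, via a long reduction to the torsion case and to the torsion-free finite-rank case, where both sides are computed through Mahler measures of factors of the characteristic polynomial. Measured against that, your proposal has two genuine gaps. First, the opening reduction is not available: Fact \ref{properties}(c) applies only to direct limits of $\phi$-invariant subgroups, and an arbitrary $(G,\phi)$ is \emph{not} a direct limit of finitely generated $\phi$-invariant subgroups --- the smallest $\phi$-invariant subgroup containing a finite set $F$ is the trajectory $T(\phi,F)$, which is finitely generated as a $\Z[t]$-module but usually not as a group (for multiplication by $1/2$ on $\Q$ one gets $T(\phi,\{1\})=\Z[1/2]$; for $\beta_{\Z(p)}$ one gets infinite direct sums). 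So you cannot reduce to $G\cong\Z^k\oplus t(G)$ with $t(G)$ finite, nor to the ``generic'' situation where $H$ is a direct summand with a $\phi$-invariant complement, and the uniform constants you hope to extract from that structure are not there to be had.

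Second, and more seriously, the subadditivity half $h_a(\phi)\le h_a(\phi\restriction_H)+h_a(\overline\phi)$ --- which is the actual content of the theorem --- rests entirely on the unproved claim that $T_n(\phi,F)\cap H\subseteq T_{n+c}(\phi\restriction_H,F')$ for some fixed finite $F'\subseteq H$ and constant $c$ independent of $n$. You give no argument for this, and there is no reason such a uniform bound should hold: which trajectory elements fall into $H$ is governed by cancellation in the quotient direction, not by the trajectory of any fixed finite subset of $H$, and taming this interaction is precisely the difficulty that forces the proof in \cite{DG} through the Algebraic Yuzvinski Formula (additivity on $\Q^n$ comes from additivity of the Mahler measure under factorization of the characteristic polynomial, which a direct trajectory count does not see). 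By contrast, your superadditivity half is essentially fine and is indeed the easy direction: taking $0\in F_H$, $0\in F_1$ and $F=F_H+F_1$, each fibre of $\pi$ over $T_n(\overline\phi,\pi(F_1))$ contains a translate of $T_n(\phi\restriction_H,F_H)$, so $\tau_{\phi,F}(n)\ge\tau_{\phi\restriction_H,F_H}(n)\cdot\tau_{\overline\phi,\pi(F_1)}(n)$ and the inequality follows by taking logarithms, limits and independent suprema. As written, then, the proposal proves only the easy inequality; the crux of the theorem remains unestablished.
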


The Addition Theorem shows exactly that $h_a$ satisfies (A2$^*$). It implies in particular that for $(G,\phi)\in\af_\abg$, the algebraic entropy $h_a$ is monotone under taking restrictions to $\phi$-invariant subgroups $H$ of $G$ and under taking endomorphisms $\overline\phi$ induced by $\phi$ on quotients $G/H$, that is, $h_a(\phi)\geq\max\{h_a(\phi\restriction_H),h_a(\overline\phi)\}$.

\medskip
Therefore, the algebraic entropy $h_a$ is an example of entropy function in the sense of Definition \ref{h-def}:

\begin{theorem}\label{tt-ha}
The algebraic entropy $h_a:\af_\abg\to\R_+\cup\{\infty\}$ is an entropy function of $\af_\abg$.
\end{theorem}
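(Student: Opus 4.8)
The plan is to verify that $h_a$ satisfies the three axioms (A1), (A2), (A3) of Definition \ref{h-def}, which is now essentially a matter of bookkeeping since all the necessary facts about $h_a$ have been collected. The statement is really a corollary of the properties of $h_a$ established in \cite{DG,DG1}, so the proof will be short and will amount to citing the appropriate items assembled above.

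First I would handle (A1): I must check that $h_a(0_{(0)})=0$ and that $h_a$ is invariant under isomorphism in $\af_\abg$. The first is immediate from the definition \eqref{h-pet} applied to the trivial group, and the second is exactly Fact \ref{properties}(a), since an isomorphism in $\af_\abg$ is in particular a conjugacy. Next, for (A2), I would invoke the Addition Theorem \ref{AT}: for any $(G,\phi)\in\af_\abg$ and any $\phi$-invariant subgroup $H$, one has $h_a(\phi)=h_a(\phi\restriction_H)+h_a(\overline\phi)$, where the three summands lie in $\R_+\cup\{\infty\}$. Since a short exact sequence $0\to N\to M\to Q\to 0$ in $\af_\abg$ is precisely the datum of a $\phi$-invariant subgroup $N\subseteq G$ with $Q\cong G/N$ and $\overline\phi$ the induced map, the equality $h_a(\phi)=h_a(\phi\restriction_N)+h_a(\overline\phi)$ immediately gives that $h_a(\phi)=0$ if and only if $h_a(\phi\restriction_N)=0=h_a(\overline\phi)$, using that a sum of non-negative extended reals vanishes iff each term does. (One should note, as remarked after Theorem \ref{AT}, that there is a unique such exact sequence up to the choice of $N$, and the "for every exact sequence" quantifier in (A2) is handled uniformly by the Addition Theorem.) Finally, for (A3), I would use Fact \ref{properties}(c): if $G=\bigoplus_{j\in J}G_j$ then $G$ is the direct limit of the $\phi$-invariant subgroups $\bigoplus_{j\in F}G_j$ over finite $F\subseteq J$, so $h_a(\phi)=\sup h_a(\phi\restriction_{\bigoplus_{j\in F}G_j})$; combining with the finite additivity coming from the Addition Theorem (Discussion \ref{A3}(a)) one gets $h_a(\bigoplus_{j\in J}G_j,\phi)=0$ iff $h_a(G_j,\phi\restriction_{G_j})=0$ for all $j$. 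Alternatively, one can simply cite Remark \ref{A3d} together with Fact \ref{properties}(c), since (A3) in a module category is equivalent to continuity along direct limits.

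I do not expect any real obstacle here: every ingredient is already in place, and the only mild subtlety is the handling of the value $\infty$ in the additivity arguments, which is covered by the convention $x+\infty=\infty$ adopted in Section \ref{lattice-sec}. The "hardest" part in spirit is the Addition Theorem \ref{AT} itself, but that is quoted as a known result from \cite{DG}, so within the scope of this proof there is nothing deep to do. Thus the proof will consist of three short paragraphs, one per axiom, each a one-line reduction to Fact \ref{properties} and Theorem \ref{AT}.

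\begin{proof}
We check the three axioms of Definition \ref{h-def}.

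(A1) holds: $h_a(0)=0$ is clear from \eqref{h-pet} applied to the trivial group, and if $(G,\phi)$ and $(H,\psi)$ are isomorphic in $\af_\abg$ then $\phi$ and $\psi$ are conjugated, so $h_a(\phi)=h_a(\psi)$ by Fact \ref{properties}(a).

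(A2) holds: an exact sequence $0\to N\to M\to Q\to 0$ in $\af_\abg$ amounts to a $\phi$-invariant subgroup $N$ of $G$ with induced endomorphism $\overline\phi$ on $G/N\cong Q$. By the Addition Theorem \ref{AT}, $h_a(\phi)=h_a(\phi\restriction_N)+h_a(\overline\phi)$, a sum in $\R_+\cup\{\infty\}$; hence $h_a(\phi)=0$ if and only if both summands vanish, which is exactly (A2).

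(A3) holds: if $G=\bigoplus_{j\in J}G_j$, then $G$ is the direct limit of the $\phi$-invariant subgroups $\bigoplus_{j\in F}G_j$ for $F\subseteq J$ finite, so $h_a(\phi)=\sup_{F}h_a(\phi\restriction_{\bigoplus_{j\in F}G_j})$ by Fact \ref{properties}(c). Combining this with the finite additivity of $h_a$ coming from Theorem \ref{AT} (see Discussion \ref{A3}(a)), we get $h_a(\bigoplus_{j\in J}G_j,\phi)=0$ if and only if $h_a(G_j,\phi\restriction_{G_j})=0$ for all $j\in J$. Thus $h_a$ satisfies (A3), and by Remark \ref{A3d} this is the correct form of (A3) in the module category $\mod_{\Z[t]}\cong\af_\abg$.

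Therefore $h_a$ is an entropy function of $\af_\abg$.
\end{proof}
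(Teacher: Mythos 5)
Your proposal is correct and follows essentially the same route as the paper, which obtains (A1) from Fact \ref{properties}(a), (A2) (via (A2$^*$)) from the Addition Theorem \ref{AT}, and (A3) from Fact \ref{properties}(c) together with Remark \ref{A3d}. The only difference is that you spell out the direct-limit/finite-additivity bookkeeping for (A3) that the paper leaves implicit, which is fine.
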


In this particular case Theorem \ref{tt} gives that $\mathfrak t_{h_a}=(\mathcal T_{h_a},\mathcal F_{h_a})$ is a hereditary torsion theory in $\af_\abg$, recalling that $$\mathcal T_{h_a}=\{(G,\phi)\in\af_\abg: h_a(\phi)=0\}\ \text{and}\ \mathcal F_{h_a}=\{(G,\phi)\in\af_\abg: h_a(\phi)>\!\!>0\}.$$ 

\medskip
Let $f(t)=a_nt^n+a_1t^{n-1}+\ldots+a_0\in\Z[t]$ be a primitive polynomial. Let $\{\lambda_i:i=1,\ldots,n\}\subseteq\mathbb C$ be the set of all roots of $f(t)$.
The \emph{(logarithmic) Mahler measure} of $f(t)$ is $$m(f(t))= \log|a_0| + \sum_{|\lambda_i|>1}\log |\lambda_i|.$$
The Mahler measure plays an important role in number theory and arithmetic geometry and is involved in the famous Lehmer's Problem, asking whether $\inf\{m(f(t)):f(t)\in\Z[t]\ \text{primitive}, m(f(t))>0\}>0$ (for example see \cite{Ward0} and \cite{Hi}).
If $g(t)\in\Q[t]$ is monic, then there exists a smallest positive integer $s$ such that $sg(t)\in\Z[t]$; in particular, $sg(t)$ is primitive. The Mahler measure of $g(t)$ is defined as $m(g(t))=m(sg(t))$. Moreover, if $\phi:\Q^n\to \Q^n$ is an endomorphism, its characteristic polynomial $p_\phi(t)\in\Q[t]$ is monic, and we can define the Mahler measure of $\phi$ as $m(\phi)=m(p_\phi(t))$.

\medskip
The following Algebraic Yuzvinski Formula shows that for an endomorphism of $\Q^n$ the algebraic entropy coincides with the Mahler measure. A direct proof of the Algebraic Yuzvinski Formula is given in \cite{Simone2} in the particular case of endomorphisms of $\Z^n$  and in \cite{GV} in the general case of endomorphisms of $\Q^n$. It is deduced in \cite{DG1} from the Yuzvinski Formula for the topological entropy of automorphisms of the Pontryagin dual $\widehat\Q^n$ of $\Q^n$ (see \cite{LW,WardLN,Y}) and from the ``Bridge Theorem'' proved by Peters in \cite{Pet} (see Theorem \ref{Peters} below). 

\begin{theorem}[Algebraic Yuzvinski Formula]\label{Yuz}
For $n\in \N_+$, if $\phi:\Q^n\to\Q^n$ is an endomorphism, then $h(\f)=m(\phi)$. 
\end{theorem}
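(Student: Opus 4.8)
The plan is to strip off the subspace of $\mathbb{Q}^n$ on which $\phi$ acts nilpotently, thereby reducing to the case of an automorphism, and then to transport the computation to the compact side via Pontryagin duality, where the classical Yuzvinski Formula is available. Concretely, I would establish the chain of equalities
$$ h_a(\phi) \;=\; h_a(\psi) \;=\; h_{\mathrm{top}}(\widehat{\psi}) \;=\; m(p_\psi(t)) \;=\; m(\phi), $$
in which $\psi$ denotes the ``automorphism part'' of $\phi$, the second equality is Peters' Bridge Theorem (Theorem \ref{Peters}), and the third is the topological Yuzvinski Formula for automorphisms of the Pontryagin dual of $\mathbb{Q}^m$ (see \cite{LW,WardLN,Y}).

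\emph{Step 1 (reduction to automorphisms).} Since $\mathbb{Q}^n$ is a finite-dimensional $\mathbb{Q}$-vector space, Fitting's lemma yields a $\phi$-invariant decomposition $\mathbb{Q}^n = V_0 \oplus V_1$ with $V_0 = \ker\phi^n$ (so that $\phi\restriction_{V_0}$ is nilpotent) and $V_1 = \phi^n(\mathbb{Q}^n)$ (so that $\psi := \phi\restriction_{V_1}$ is an automorphism of $V_1\cong \mathbb{Q}^m$, $m\le n$). Applying the Addition Theorem (Theorem \ref{AT}) to the $\phi$-invariant subgroup $V_0$, and using that the induced endomorphism $\overline\phi$ on $\mathbb{Q}^n/V_0$ is conjugate to $\psi$ (Fact \ref{properties}(a)), one gets $h_a(\phi) = h_a(\phi\restriction_{V_0}) + h_a(\psi)$; and $h_a(\phi\restriction_{V_0}) = 0$ because nilpotency forces the trajectories $T_k(\phi\restriction_{V_0},F)$ to be eventually stationary, so $H(\phi\restriction_{V_0},F) = 0$ for every finite $F$. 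On the arithmetic side, $p_\phi(t) = t^{\dim V_0}\,p_\psi(t)$; since the logarithmic Mahler measure is additive on products and $m(t^{\dim V_0}) = 0$, this gives $m(\phi) = m(\psi)$. So it suffices to prove $h_a(\psi) = m(\psi)$ for an automorphism $\psi$ of $\mathbb{Q}^m$.

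\emph{Step 2 (duality and Yuzvinski).} The Pontryagin dual $\widehat{\mathbb{Q}^m} \cong \widehat{\mathbb{Q}}^{\,m}$ is a compact, connected, finite-dimensional abelian group (a power of the solenoid), and $\widehat{\psi}$ is a topological automorphism of it. Peters' Bridge Theorem (Theorem \ref{Peters}) --- applicable since $\mathbb{Q}^m$ is a countable abelian group and $\psi$ an automorphism --- yields $h_a(\psi) = h_{\mathrm{top}}(\widehat{\psi})$. The classical Yuzvinski Formula for the topological entropy of automorphisms of finite-dimensional solenoids (\cite{LW,WardLN,Y}) then gives $h_{\mathrm{top}}(\widehat{\psi}) = m(p_\psi(t)) = m(\psi)$, and combining with Step 1 concludes the argument. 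Alternatively, one may invoke the self-contained proofs of the Algebraic Yuzvinski Formula in \cite{Simone2} for endomorphisms of $\mathbb{Z}^n$ and in \cite{GV} for endomorphisms of $\mathbb{Q}^n$, which bypass duality altogether.

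\emph{Main obstacle.} The genuinely deep content --- the classical Yuzvinski Formula, and in particular the diophantine control on $\sum_{|\lambda_i|>1}\log|\lambda_i|$ --- is imported rather than reproved; the residual difficulty is a matter of compatibility and bookkeeping: matching the normalizations on the two sides (the monic characteristic polynomial $p_\psi(t)\in\mathbb{Q}[t]$ versus its primitive integral multiple $s\,p_\psi(t)$, whose leading coefficient $s$ supplies the ``$\log s$'' contribution to $m(\psi)$), checking that Theorem \ref{Peters} is stated precisely for the automorphisms of divisible countable groups that arise here, and handling the definition of $h_a$ via finite subsets with care on the non-finitely-generated summands $V_0$ and $V_1$ produced by the Fitting decomposition.
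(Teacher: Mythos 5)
Your route is exactly the one the paper indicates: the paper does not prove Theorem \ref{Yuz} itself, but states that it is deduced in \cite{DG1} from the topological Yuzvinski Formula for automorphisms of $\widehat{\Q}^n$ (\cite{LW,WardLN,Y}) together with Peters' Bridge Theorem (Theorem \ref{Peters}), with direct proofs available in \cite{Simone2,GV}; your Fitting reduction plus duality argument reconstructs precisely that deduction, and the individual steps (stationarity of trajectories on the nilpotent part, $m(\phi)=m(\psi)$ via $p_\phi(t)=t^{\dim V_0}p_\psi(t)$ and multiplicativity of the Mahler measure, applicability of Theorem \ref{Peters} to the countable group $\Q^m$) are all correct.

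One caution: in Step 1 you invoke the full Addition Theorem (Theorem \ref{AT}), but the paper explicitly records that the Algebraic Yuzvinski Formula was heavily used in the proof of Theorem \ref{AT} in \cite{DG}, so as written your argument is circular as a matter of logical dependency. The fix is immediate: since $V_0$ and $V_1$ are $\phi$-invariant direct summands, you only need additivity of $h_a$ on direct sums, $h_a(\phi_0\oplus\psi)=h_a(\phi_0)+h_a(\psi)$, which follows elementarily from the definition (for $F\subseteq F_1\times F_2$ one has $T_n(\phi,F_1\times F_2)=T_n(\phi_0,F_1)\times T_n(\psi,F_2)$), with no appeal to Theorem \ref{AT}. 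With that substitution the proof is sound and agrees with the paper's cited derivation.
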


The Algebraic Yuzvinski Formula was heavily used in the proof of the Addition Theorem \ref{AT} in \cite{DG}, and this was the reason to avoid the use of (A2) in the proof of $\QQ\leq\P_{h_a}$ in \cite{DG1}.
It plays an important role also in the following 

\begin{theorem}[Uniqueness Theorem]\label{UT}
The algebraic entropy $h_a$ of $\af_\abg$ is characterized as the unique collection $h_a=\{h_G:G\in\abg\}$ of functions $h_G:\End(G)\to\R_+\cup\{\infty\}$ such that:
\begin{itemize}
\item[(a)] $h_G$ is invariant under conjugation for every abelian group $G$;
\item[(b)] the Addition Theorem holds for $h_G$ for every abelian group $G$;
\item[(c)] if $\phi\in\End(G)$ and the abelian group $G$ is a direct limit of $\phi$-invariant subgroups $\{G_j:j\in J\}$, then $h_G(\phi)=\sup_{j\in J}h_{G_j}(\phi\restriction_{G_j})$;
\item[(d)] $h_{\Z(p)^{(\N)}}(\beta_{\Z(p)})=\log p$ for every prime $p$;
\item[(e)] the Algebraic Yuzvinski Formula holds for $h_\Q$ restricted to the automorphisms of $\Q$.
\end{itemize}
\end{theorem}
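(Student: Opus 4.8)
The plan is to split the claim into the routine \emph{existence} part---that $h_a$ itself satisfies (a)--(e)---and the substantial \emph{uniqueness} part. For existence nothing new is needed: $h_a$ satisfies (a) by Fact~\ref{properties}(a), (b) by the Addition Theorem~\ref{AT}, (c) by Fact~\ref{properties}(c), (d) by Fact~\ref{properties}(d), and (e) is the rank-one case of the Algebraic Yuzvinski Formula~\ref{Yuz}. For uniqueness, fix a collection $h=\{h_G:G\in\abg\}$ satisfying (a)--(e); the goal is to show $h_G=(h_a)_G$ on $\End(G)$ for every $G$, which for brevity I write $h=h_a$.

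First I would record the consequences of the axioms that drive the reductions. Non-negativity of the $h_G$ together with (b) gives monotonicity: $h(\phi)\ge\max\{h(\phi\restriction_H),h(\overline\phi)\}$ whenever $H$ is a $\phi$-invariant subgroup. Since reduction modulo $p$ exhibits $(\Z(p)^{(\N)},\beta_{\Z(p)})$ as a quotient flow of $(\Z^{(\N)},\beta_\Z)$, monotonicity and (d) force $h(\beta_\Z)\ge\log p$ for all primes $p$, hence $h(\beta_\Z)=\infty$. Most importantly, restricting $h$ to flows over torsion abelian groups one checks that it satisfies the axioms characterizing the algebraic entropy $\ent$ (conjugation invariance, the Addition Theorem, continuity on direct limits, and the normalization $\ent(\beta_{\Z(p)})=\log p$), whence $h=h_a=\ent$ on every torsion flow by the Uniqueness Theorem for $\ent$ \cite{DGSZ}; in particular $h$ vanishes on all finite flows.

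Next I would carry out the structural reduction. By (c), $(G,\phi)$ is the directed union of the subflows $(T(\phi,F),\phi\restriction)$ over the finite $F\subseteq G$, so it is enough to treat flows $(M,\phi)$ with $M$ finitely generated over $\Z[t]$; then $t(M)$ is a $\phi$-invariant bounded torsion submodule, and the Addition Theorem together with the torsion case reduces matters to the torsion-free quotient $N=M/t(M)$, which is again finitely generated over $\Z[t]$. If $\phi$ is not pointwise algebraic on $N$, then $N\otimes\Q$ has a free $\Q[t]$-summand; clearing denominators in a $\Q[t]$-linear surjection onto it produces a quotient flow of $(N,\overline\phi)$ containing a $\phi$-invariant copy of $(\Z^{(\N)},\beta_\Z)$, so $h(\overline\phi)=\infty$, as is $h_a(\overline\phi)$, by monotonicity. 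If $\phi$ is pointwise algebraic on $N$, then $N$ has finite rank; embedding $N$ in its divisible hull $\Q^n$ (the cokernel being torsion, hence handled above) reduces matters to flows $(\Q^n,\phi)$ with $\phi$ an arbitrary endomorphism.

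The case $(\Q^n,\phi)$ is where the real work lies and is the step I expect to be the main obstacle: one must show that (a)--(e) force $h(\Q^n,\phi)=m(\phi)$, i.e.\ bootstrap the full Algebraic Yuzvinski Formula from the rank-one automorphism case supplied by (e). Using the Addition Theorem and the rational canonical form, decompose $(\Q^n,\phi)$ into cyclic pieces $(\Q[t]/(g^e),\mu)$ with $g$ monic irreducible and $\mu$ multiplication by $t$, and filter each of these so that all composition factors are $(\Q[t]/(g),\mu)$; this reduces everything to $(\Q,0_\Q)$ (for $g=t$) and to $(K,\mu)$ with $K=\Q(\alpha)$ a number field, $g$ the minimal polynomial of $\alpha$, and $\mu$ multiplication by $\alpha$ (for $g\ne t$). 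For $(K,\mu)$, splitting off a $\mu$-invariant full lattice $L\cong\Z^n$ and applying the Addition Theorem, the ``arithmetic'' part of $m(g)$---the contributions of the denominator of $\alpha$ and of $N_{K/\Q}(\alpha)$, living on the torsion quotients $K/L$ and $L/\mu(L)$---is pinned by the already-settled torsion case, while the ``archimedean'' part $\sum_{|\alpha_i|>1}\log|\alpha_i|$ remains on the lattice flow $(L,\mu)$ (together with the equality $h(\Q,0_\Q)=0=m(t)$ in the degenerate case). This last quantity is the genuinely hard point: since the minimal polynomial is irreducible over $\Q$ there are no nontrivial $\Q$-sub- or quotient-flows through which axiom (e) could act, so (e) alone says nothing about it, and pinning it is precisely the content of the full Algebraic Yuzvinski Formula~\ref{Yuz}---which one would invoke here, e.g.\ through the Bridge Theorem~\ref{Peters} and the topological Yuzvinski formula for automorphisms of tori and solenoids, or directly via the estimates of \cite{Simone2,GV}---to conclude that $h$ agrees with $m(\,\cdot\,)$ on every $(\Q^n,\phi)$. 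Together with the torsion case and the reductions above, this yields $h=h_a$ everywhere.
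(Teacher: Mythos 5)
The paper itself gives no proof of Theorem \ref{UT}: the result is quoted from \cite{DG}, and the only hint offered here is that $h_a$ is a length function of $\mod_{\Z[t]}$, so that uniqueness can also be obtained from V\'amos' Fact \ref{V} by comparing values on the cyclic modules $\Z[t]/\mathfrak p$. Your reduction scheme (existence via Fact \ref{properties}, Theorem \ref{AT} and Theorem \ref{Yuz}; direct limits down to finitely generated $\Z[t]$-modules; splitting off the torsion part by the Addition Theorem; a copy of $\beta_\Z$ with $h(\beta_\Z)=\infty$ in the non-pointwise-algebraic case; the divisible hull to reach $(\Q^n,\phi)$) is the standard route and is compatible with that argument. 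One secondary caveat: you settle the torsion case by quoting Fact \ref{properties-ent}(f), whose hypothesis list includes the logarithmic law (item (b) there), which your abstract $h$ is not assumed to satisfy; you should either rederive the torsion case from (a)--(d) alone (by AT and direct limits one reduces to finite flows, which get value $0$ because every finite cyclic $\mathbb F_p[t]$-module is a quotient of $\mathbb F_p[t]\cong(\Z(p)^{(\N)},\beta_{\Z(p)})$ by a subflow isomorphic to $\beta_{\Z(p)}$, and $\log p<\infty$), or justify that (b) is dispensable in that characterization.

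The genuine gap is your last step, and your own text exposes it. You read (e) literally as the rank-one case and correctly observe that for $(\Q^n,\phi)$ with irreducible characteristic polynomial there are no nonzero proper subflows or quotient flows over $\Q$, so (a)--(d) plus rank-one (e) give no purchase on $h(\Q^n,\phi)$; but you then ``conclude'' by invoking Theorem \ref{Yuz} (possibly through Theorem \ref{Peters}). Theorem \ref{Yuz} is a statement about the concrete entropy $h_a$: it yields $h_a(\phi)=m(\phi)$ and says nothing about the abstract collection $h$, so it cannot pin $h(\Q^n,\phi)$; this is precisely the point where uniqueness must come from the hypotheses, and citing the formula for $h_a$ is a non sequitur. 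Moreover, under your literal reading the missing step cannot be supplied at all: using the description of length functions of $\mod_{\Z[t]}$ through their values on the modules $\Z[t]/\mathfrak p$ (Fact \ref{V}, \cite{NR,V}), one can construct a length function that agrees with $h_a$ on $\Z[t]$, on every $\Z[t]/(p)$, on every $\Z[t]/(bt-a)$ and on all finite modules, but takes the value $2m(f)$ on $\Z[t]/(f)$ for irreducible primitive $f$ of degree at least $2$; it satisfies (a)--(e) as you read them (additivity gives (b), upper continuity gives (c), and the value of such a function on the flow $(\Q,\cdot\,a/b)$ equals its value on $\Z[t]/(bt-a)$, so (e) holds), yet it differs from $h_a$ already on the companion flow of $t^2-t-1$. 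So item (e) must be understood, as in \cite{DG}, as the full Algebraic Yuzvinski Formula for $h$ on every $\Q^n$; under that reading your ``genuinely hard point'' is simply an appeal to hypothesis (e) --- Theorem \ref{Yuz} being needed only for the existence half --- and the remainder of your outline is essentially the intended proof.
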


Note that the logarithmic law, which is axiom (A4$^*$) does not appear among the axioms needed in the Uniqueness Theorem, since it is embodied in (d) and (e).

\subsection{Algebraic entropy in $\mathbf{TorAbGrp}$}\label{entW-sec}

In Section \ref{ent-sec} we have discussed the algebraic entropy $h_a$ of $\af_\abg$, and we have shown that it is an entropy function. But, as said in the introduction, the algebraic entropy $h_a$ introduced by Peters in \cite{Pet} is a modification of the algebraic entropy $\ent$ by Weiss in \cite{W}. The algebraic entropy $\ent$ is defined as $h_a$, but taking in \eqref{ent-sec} the supremum with $F$ ranging among all finite subgroups of $G$, instead of all non-empty finite subsets of $G$. More precisely, for $(G,\phi)\in\af_\abg$,
$$\ent(\phi)=\sup\{H(\phi,F):F\leq G, F\ \text{finite}\}.$$
From the definition it follows directly that
\begin{equation}\label{h-ent}
\ent(\phi)=\ent(\phi\restriction_{t(G)})=h_a(\phi\restriction_{t(G)}),
\end{equation}
where $t(G)$ is the torsion subgroup of $G$.

\smallskip
Since $h_a$ satisfies (A0), \eqref{h-ent} implies immediately that also $\ent$ satisfies (A0).

\medskip
The following are the basic properties of the algebraic entropy $\ent$, proved in \cite{DGSZ,W}.

\begin{fact}\label{properties-ent}
Let $(G,\phi)\in\af_\abg$.
\begin{itemize}
\item[(a)] If $(H,\psi)\in\af_\abg$ and $\phi$ and $\psi$ are conjugated, then $\ent(\f) = \ent(\psi)$.
\item[(b)] For every $k\in\N$, $\ent(\f^k) = k \cdot \ent(\f)$. If $\f$ is an automorphism, then $\ent(\f^k)=|k|\cdot\ent(\f)$ for every $k\in\Z$.
\item[(c)] If $G$ is a direct limit of $\f$-invariant subgroups $\{G_j : j \in J\}$, then $\ent(\f)=\sup_{j\in J}\ent(\f\restriction_{G_j})$.
\item[(d)] For every prime $p$, $\ent(\beta_{\Z(p)})=\log p$.
\item[(e)] If $G$ is torsion and $H$ is a $\f$-invariant subgroup of $G$, then $\ent(\f)=\ent(\f\restriction_H)+\ent(\overline\f)$, where $\overline\f:G/H\to G/H$  is the endomorphism induced by $\f$.
\item[(f)] The algebraic entropy of the endomorphisms of the torsion abelian groups is characterized as the unique collection $h = \{h_G : G \text{ torsion abelian group}\}$ of functions $h_G:\End(G) \to \mathbb R_+\cup\{\infty\}$ that satisfy (a), (b), (c), (d) and (e).
\end{itemize}
\end{fact}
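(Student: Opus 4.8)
The plan is to read (a)--(d) off directly from the defining formula for $\ent$, to reduce the Addition Theorem (e) to the case of a $p$-group, and to bootstrap the uniqueness statement (f) from (a)--(e). For (a): if $u$ is an isomorphism in $\abg$ with $u\f=\psi u$, then $u(T_n(\f,F))=T_n(\psi,u(F))$ for every finite subgroup $F\le G$ and every $n$, so $\tau_{\f,F}(n)=\tau_{\psi,u(F)}(n)$, whence $H(\f,F)=H(\psi,u(F))$ and, taking suprema over finite subgroups, $\ent(\f)=\ent(\psi)$. For (c): writing $G=\bigcup_j G_j$ as a directed union, every finite subgroup of $G$ already lies in some $G_j$, where $T_n(\f,F)=T_n(\f\restriction_{G_j},F)$; hence $H(\f,F)\le\ent(\f\restriction_{G_j})$ and $\ent(\f)\le\sup_j\ent(\f\restriction_{G_j})$, the reverse inequality being monotonicity of $\ent$ under restriction to subgroups. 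For (d): with $\Z(p)^{(\N)}=\bigoplus_n\hull{e_n}$ and $F=\hull{e_0}$ one computes $T_n(\beta,F)=\hull{e_0,\dots,e_{n-1}}$, so $\tau_{\beta,F}(n)=p^n$ and $H(\beta,F)=\log p$, while any finite subgroup lies in some $\hull{e_0,\dots,e_{m-1}}$, giving $\tau_{\beta,F'}(n)\le p^{m+n-1}$ and $H(\beta,F')\le\log p$; hence $\ent(\beta_{\Z(p)})=\log p$. For (b): fixing a finite subgroup $F$ and $k\in\N_+$, the relations
$$T_n(\f^k,F)\ \subseteq\ T_{kn}(\f,F)\ =\ T_n\bigl(\f^k,T_k(\f,F)\bigr)$$
and the monotonicity of $\tau_{\f,-}(n)$ give $H(\f^k,F)\le kH(\f,F)$ and $kH(\f,F)=H(\f^k,T_k(\f,F))\le\ent(\f^k)$, hence $\ent(\f^k)=k\,\ent(\f)$; if $\f$ is an automorphism, $\f^{n-1}$ carries $T_n(\f^{-1},F)$ bijectively onto $T_n(\f,F)$, so $\ent(\f^{-1})=\ent(\f)$ and the logarithmic law extends to all $k\in\Z$.

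The main obstacle is (e). The inequality $\ent(\f)\ge\ent(\f\restriction_H)$ is immediate, and $\ent(\f)\ge\ent(\overline\f)$ follows by lifting a finite subgroup of $G/H$ to a finite subgroup of $G$ (possible as $G$ is torsion) whose $n$-th trajectory surjects onto that of its image. For the remaining inequality $\ent(\f)\le\ent(\f\restriction_H)+\ent(\overline\f)$ I would first dispatch the direct-sum case $\ent(\alpha\oplus\beta)=\ent(\alpha)+\ent(\beta)$ by a short trajectory computation, then use the primary decomposition $G=\bigoplus_pG_p$ (which both $\f$ and $H$ respect) together with (c) to reduce to $G$ a $p$-group, and finally bound $|T_n(\f,F)|\le|T_{n+c}(\f\restriction_H,F')|\cdot|T_n(\overline\f,\pi(F))|$ with a constant $c$ independent of $n$. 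This last estimate --- controlling the kernel part $T_n(\f,F)\cap H$ by a trajectory in $H$ of length $n+c$, using the structure of $p$-groups --- is the delicate step; it is carried out in \cite{DGSZ,W}.

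Finally, for (f) let $h=\{h_G\}$ satisfy (a)--(e). For finite $G$ and $\f\in\End(G)$, some power $\f^k$ is idempotent (a finite semigroup contains an idempotent power of each element), so (b) gives $h_G(\f^k)=2h_G(\f^k)$, forcing $h_G(\f^k)\in\{0,\infty\}$ and hence $h_G(\f)\in\{0,\infty\}$. To rule out $\infty$, note that $\beta$ and $\beta-1$ are both injective on $\Z(p)^{(\N)}$, so $\beta\restriction_{\beta(\Z(p)^{(\N)})}$ and $\beta\restriction_{(\beta-1)(\Z(p)^{(\N)})}$ are conjugate to $\beta$, while the quotients $\Z(p)^{(\N)}/\beta(\Z(p)^{(\N)})$ and $\Z(p)^{(\N)}/(\beta-1)(\Z(p)^{(\N)})$ are copies of $\Z(p)$ carrying respectively $0_{\Z(p)}$ and $1_{\Z(p)}$; applying (e) to the two resulting short exact sequences and using $h(\beta)=\log p<\infty$ from (d) yields $h_{\Z(p)}(0_{\Z(p)})=h_{\Z(p)}(1_{\Z(p)})=0$, and the Fitting decomposition together with (b), (e) and a composition series argument then give $h_G(\f)=0$ for every finite $G$. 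Once $h$ vanishes on finite groups, agrees with $\ent$ on the flows $\beta_{\Z(p)}$ by (d), and is additive and upper continuous by (e) and (c), the value $h_G(\f)$ for an arbitrary torsion $(G,\f)$ is determined by reducing, via the structure of $p$-groups, to these Bernoulli building blocks; this reduction, together with (e), is the technical heart of the uniqueness proof in \cite{DGSZ,W}.
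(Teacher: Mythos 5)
The paper offers no proof of this Fact: it is quoted directly from \cite{DGSZ,W}, so there is no argument of the paper's to deviate from. Your direct verifications of (a)--(d) and of the easy halves of (e) and (f) are correct, and the two genuinely nontrivial ingredients --- the inequality $\ent(\f)\le\ent(\f\restriction_H)+\ent(\overline\f)$ in (e) and the reduction of the uniqueness statement (f) to the Bernoulli shifts --- are exactly the steps you defer to the same references the paper cites, so your treatment matches the paper's.
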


It follows respectively from (a), (b) and (c) of Fact \ref{properties-ent} that $\ent$ satisfies (A1), (A4$^*$) and (A3) considered on all $\af_\abg$. Moreover, (d) is (A5) and (e) shows that $\ent$ satisfies (A2$^*$) (and so (A2)) when is restricted to the subcategory $\af_\mathbf{TorAbGrp}$ of $\af_\abg$. Since $\mathbf{TorAbGrp}$ is an abelian category, $\af_{\mathbf{TorAbGrp}}$ is an abelian category as well.
So we have just verified the following

\begin{theorem}\label{tt-ent}
The algebraic entropy $\ent:\af_\mathbf{TorAbGrp}\to \R_+\cup\{\infty\}$ is an entropy function.
\end{theorem}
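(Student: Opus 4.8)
The goal is to verify that $\ent:\af_\mathbf{TorAbGrp}\to\R_+\cup\{\infty\}$ satisfies the three axioms (A1), (A2), (A3) of Definition \ref{h-def}, which by definition amounts to being an entropy function. Each of the three verifications will be read off from the corresponding item of Fact \ref{properties-ent}, together with the observation that the ambient category $\af_\mathbf{TorAbGrp}$ is a (full, abelian) subcategory of $\af_\abg$, so that all the structure (isomorphisms, exact sequences, coproducts) referred to in the axioms is computed as in $\af_\abg$.

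\textbf{Step 1: (A1).} First I would note that $\ent(0_G)=0$ for every torsion abelian group $G$, since $T_n(0_G,F)=F$ for every finite $F\leq G$, so $H(0_G,F)=\lim_n\frac{\log|F|}{n}=0$; hence $\ent(0_G)=0$, and in particular $\ent$ vanishes on the zero object of $\af_\mathbf{TorAbGrp}$. For invariance under isomorphism: an isomorphism in $\af_\mathbf{TorAbGrp}$ between $(G,\phi)$ and $(H,\psi)$ is precisely a group isomorphism $u:G\to H$ with $u\phi=\psi u$, i.e. $\phi$ and $\psi$ are conjugated; then Fact \ref{properties-ent}(a) gives $\ent(\phi)=\ent(\psi)$. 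So (A1) holds.

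\textbf{Step 2: (A2).} Here I would invoke Fact \ref{properties-ent}(e), the Addition Theorem for $\ent$ on torsion abelian groups: for $(G,\phi)\in\af_\mathbf{TorAbGrp}$ and a $\phi$-invariant subgroup $H$ of $G$ (which is exactly a subobject $N\subseteq M=(G,\phi)$ in $\af_\mathbf{TorAbGrp}$, with the quotient object $M/N=(G/H,\overline\phi)$), we have $\ent(\phi)=\ent(\phi\restriction_H)+\ent(\overline\phi)$. Since these summands lie in $\R_+\cup\{\infty\}$, the sum is $0$ if and only if both summands are $0$. Every short exact sequence $0\to N\to M\to Q\to 0$ in $\af_\mathbf{TorAbGrp}$ is, up to isomorphism, of this form, so using Step 1's invariance we conclude $\ent(M)=0$ iff $\ent(N)=0=\ent(Q)$ for every such exact sequence. (One subtlety worth a line: the ``only if'' direction of (A2) needs that for \emph{every} exact sequence through $M$ both ends vanish; this is immediate from the displayed equality applied to each $N$.) Thus $\ent$ satisfies the additive axiom (A2$^{*}$) on $\af_\mathbf{TorAbGrp}$, a fortiori (A2).

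\textbf{Step 3: (A3), and the main obstacle.} For a family $\{(G_j,\phi_j):j\in J\}$ in $\af_\mathbf{TorAbGrp}$, the coproduct is $\bigl(\bigoplus_{j}G_j,\bigoplus_j\phi_j\bigr)$, and $\bigoplus_j G_j$ is the direct limit of the $\phi$-invariant subgroups $\bigoplus_{j\in J_0}G_j$ over finite $J_0\subseteq J$; on each such finite piece the endomorphism is $\bigoplus_{j\in J_0}\phi_j$, whose entropy equals $\max_{j\in J_0}\ent(\phi_j)$ by the finite additivity in Fact \ref{properties-ent}(e) (and Step 1). Then Fact \ref{properties-ent}(c) (continuity along direct limits of invariant subgroups) gives $\ent\bigl(\bigoplus_j\phi_j\bigr)=\sup_{j\in J}\ent(\phi_j)$, which is $0$ iff every $\ent(\phi_j)=0$; this is (A3). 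The one point that requires care — and I expect it to be the main thing to get right rather than a deep difficulty — is the bookkeeping identifying exact sequences, subobjects, quotients and coproducts in $\af_\mathbf{TorAbGrp}$ with the familiar $\phi$-invariant subgroups, induced maps and direct sums of flows, so that Fact \ref{properties-ent}(a)--(e), which is phrased in the language of group endomorphisms, applies verbatim; once that dictionary is in place, each axiom is a one-line consequence. Finally, since $\mathbf{TorAbGrp}$ is abelian, so is $\af_\mathbf{TorAbGrp}$ (as recalled before the statement), and it is well-powered and cocomplete, so Definition \ref{h-def} applies and $\ent$ is an entropy function of $\af_\mathbf{TorAbGrp}$.
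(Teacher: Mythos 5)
Your proof is correct and follows essentially the same route as the paper: (A1) from Fact \ref{properties-ent}(a), (A2) via the Addition Theorem of Fact \ref{properties-ent}(e), (A3) from the direct-limit property of Fact \ref{properties-ent}(c), together with the observation that $\af_\mathbf{TorAbGrp}$ is abelian. (One cosmetic slip: by additivity the entropy of a finite direct sum is the \emph{sum}, not the maximum, of the entropies, and the entropy of an infinite coproduct need not equal $\sup_{j}\ent(\phi_j)$; since only the vanishing of these values matters for (A3), your argument is unaffected.)
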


The same result can be obtained by Theorem \ref{tt-ha}, noting that $\ent$ coincides with $h_a$ on $\af_{\mathbf{TorAbGrp}}$ by \eqref{h-ent}.

\medskip
In view of Theorem \ref{tt-ent} the results of the previous sections can be applied to $\ent:\af_{\mathbf{TorAbGrp}}\to \R_+\cup\{\infty\}$. In particular, Theorem \ref{tt} implies that $\mathfrak t_\ent=(\mathcal T_\ent,\mathcal F_\ent)$ is a hereditary torsion theory in $\af_\mathbf{TorAbGrp}$, where 
$$\mathcal T_{\ent}=\{(G,\phi)\in\af_{\mathbf{TorAbGrp}}: \ent(\phi)=0\}\ \text{and}\ \mathcal F_{\ent}=\{(G,\phi)\in\af_\mathbf{TorAbGrp}:\ent(\phi)>\!\!>0\}.$$ 

Let us recall that for $(G,\phi)\in\af_\abg$ 
\begin{equation}\label{tphi}
t_\phi(G)=\{x\in G: |T(\phi,\langle x\rangle)|<\infty\}
\end{equation}
is the \emph{$\phi$-torsion} subgroup of $G$. 
For $(G,\phi)\in\af_{\mathbf{TorAbGrp}}$, from \cite{DG1} we have 
\begin{equation}\label{DG1}
\P_\ent(G,\phi)=t_\phi(G)=Q_1(G,\phi)=\QQ(G,\phi).
\end{equation}
 
In particular, considering the restriction $\QQ_T$ of the radical $\QQ$ to $\af_{\mathbf{TorAbGrp}}$, we have that $$\mathfrak t_\ent=\mathfrak t_{\QQ_T},$$ that is, the inequality of Theorem \ref{semiA6} becomes an equality in this case.

\subsection{$i$-Entropy in $\mod_R$}\label{ient-sec}

\subsubsection{The entropy associated to an additive invariant}

Let $R$ be a ring. Recall, that an invariant $i:\mod_R\to\R_+\cup\{\infty\}$  is additive if it satisfies (A2$^*$), and an additive invariant is a length function if it is also upper continuous (see Definition \ref{L-def} above).
Moreover, $i$ is \emph{discrete} if it has values in a subset of $\mathbb R_+$ order-isomorphic to $\N$ \cite{SZ}.

\begin{remark}
The algebraic entropy $\ent$ is discrete as it takes values in $\log\N_+\cup\{\infty\}$. 

On the other hand, it is known that the above mentioned Lehmer's Problem is equivalent to the problem of finding the value of $\inf\{h_a(\phi):(G,\phi)\in\af_\abg, h_a(\phi)>0\}$ (see \cite{WardLN}) --- note that this follows also from the Algebraic Yuzvinski Formula (i.e., Theorem \ref{Yuz}).  This value is positive if and only if  the algebraic entropy $h_a$ is discrete (see \cite{DG,DGZ}).
\end{remark}

For an invariant $i$ of $\mod_R$, the class of all $R$-modules $M$ with $i(M)<\infty $ is closed under finite sums and quotients. If $i$ is an additive invariant, then this class is closed also under submodules and extensions. Following \cite{SZ,SVV}, for $M\in\mod_R$ let $$\Fin_i(M)=\{N\leq M: i(N)<\infty\},$$
\begin{equation}\label{LAST-eq}
z_i(M)=\{x\in M:i(Rx)=0\} \mbox{ and }f_i(M)=\{x\in M:i(Rx)<\infty\}.
\end{equation}
For every $\phi\in\End(M)$, $z_i(M)$ is a $\phi$-invariant submodule of $M$. Moreover, $z_i:\mod_{R}\to \mod_{R}$ is a hereditary radical when $i$ is a length function, while the preradical $f_i:\mod_R\to \mod_R$ need not be a radical (see \cite[Example 2.5]{SVV}). 

\smallskip 
Following \cite{SVV,Simone}, for $M\in\mod_R$ we denote by $N_{i*}$ the $\mathfrak t_{z_i}$-closure of a submodule $N$ of $M$ (in the sense of Definition \ref{tr-closure} above), that is, $N_{i*}=\pi^{-1}(z_i(M/N))$ where $\pi:M\to M/N$ is the canonical projection. 

\medskip
If $f_i(M)=M$ we say that $M$ is \emph{locally $i$-finite}. 
Let $\mathrm{lFin}_i(R)$ be the class of all locally $i$-finite left $R$-modules. As noted in \cite{SVV} this class is closed under quotients, direct sums and submodules, while in general it is not closed under extensions. 

\begin{remark}\label{R-toriR}
As the ring $R$ is a generator of $\mod_R$, the behavior of an additive invariant $i$ of $\mod_R$ depends on whether $R\in \mathrm{lFin}_i(R)$ or not.
If $i(R)=0$, then $i(M)=0$ for every $M\in\mod_R$, that is, $i\equiv0$ (since every $M\in\mod_R$ is quotient of some free module $R^{(I)}$, which has $i(R^{(I)})=0$ by the 
additivity of $i$). In the sequel assume that $i\not \equiv0$, i.e., $i(R)>0$. 
\begin{itemize}
\item[(a)] If $R\not\in\mathrm{lFin}_i(R)$, then $i(R)=\infty$. In this case, if $x\in f_i(M)$, then $x$ is torsion, i.e., $\mathrm{ann}_R(x) \ne 0$. 

An example to this effect is given by $R=\Z$ and  $i=\log|-|$, since $\log|\Z|=\infty$. In this case $f_i(G)=t(G)$ for every abelian group $G$.

\item[(b)] If $R\in\mathrm{lFin}_i(R)$ (i.e., $0< i(R)< \infty $), then $\mathrm{lFin}_i(R) = \mod_R$ as $\mathrm{lFin}_i(R)$ is closed under direct sums  and quotients (nevertheless, $i(R^{(I)})=\infty$ if $I$ is infinite, by the additivity of $i$).  By Proposition \ref{i(R)_fin}, $i(R/I)=0$ for every proper ideal $I$ of $R$. 

An example to this effect is given by $i=r_0:\abg\to\N\cup\{\infty\}$; in fact, $r_0(\Z)=1$ and $r_0(\Z^{(\N)})$ is infinite.
\end{itemize}
\end{remark}

\medskip
In analogy with \eqref{h-ent} for the algebraic entropy $\ent$, also for the $i$-entropy of $(M,\phi)\in\af_R$ we have 
\begin{equation}\label{veryLAST-eq}
\ent_i(\phi)=\ent_i(\phi\restriction_{f_i(M)}) \text{ and }\ent_i(\phi\restriction_{z_i(M)})=0.
\end{equation}

\medskip
Let $(M,\phi)\in\af_R$ and consider an additive invariant $i$ of $\mod_R$. It is proved in \cite{SZ} that for $F\in\mathrm{Fin}_i(M)$ the limit $$H_i(\phi,F)=\lim_{n\to\infty}\frac{i(T_n(\f,F))}{n}$$ exists, and $H_i(\phi,F)$ is the \emph{algebraic entropy of $\f$ with respect to $F$}. The \emph{algebraic entropy of $\phi$} is $$\ent_i(\phi)=\sup\{H_i(\phi,F):F\in\Fin_i(M)\}.$$ 
Obviously, $i \equiv 0$ yields $\ent_i \equiv 0$. 

\begin{example}
For the invariant $i=\log|-|$ of $\abg$, $\ent_i$ is exactly the algebraic entropy $\ent$.
\end{example}

The following fact is a direct consequence of the definition (see also \cite[Proposition 1.8]{SZ}) and it shows that $\ent_i$ satisfies (A0).

\begin{fact}
Let $(M,\phi)\in\af_R$. If $F\in\Fin_i(M)$ is $\phi$-invariant, then $H(\phi,F)=0$. In particular, $\ent_i(0_M)=0$ and $\ent_i(1_M)=0$. 
\end{fact}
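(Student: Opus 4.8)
The plan is to use the $\phi$-invariance of $F$ to show that the $\phi$-trajectories of $F$ stabilize at once. First I would observe that $F$ being $\phi$-invariant means $\phi(F)\subseteq F$, whence $\phi^k(F)\subseteq F$ for every $k\in\N$; therefore $T_n(\phi,F)=F+\phi(F)+\ldots+\phi^{n-1}(F)=F$ for every $n\in\N_+$. Consequently $i(T_n(\phi,F))=i(F)$ for all $n$, and this value is finite since $F\in\Fin_i(M)$. Hence, using that the limit defining $H_i(\phi,F)$ exists by the result of \cite{SZ} recalled just before the statement,
$$H_i(\phi,F)=\lim_{n\to\infty}\frac{i(T_n(\phi,F))}{n}=\lim_{n\to\infty}\frac{i(F)}{n}=0.$$

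For the ``in particular'' part I would first note that $0\in\Fin_i(M)$, since $i(0)=0$ by (A1), so the supremum defining $\ent_i$ ranges over a non-empty family. For the zero endomorphism, every $F\in\Fin_i(M)$ satisfies $0_M(F)=0\subseteq F$, hence is $0_M$-invariant, so the first part gives $H_i(0_M,F)=0$ for all such $F$, and taking the supremum over $F\in\Fin_i(M)$ yields $\ent_i(0_M)=0$. For the identity, every submodule $F$ satisfies $1_M(F)=F\subseteq F$, hence is $1_M$-invariant, and the same argument gives $\ent_i(1_M)=0$.

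There is essentially no obstacle in this argument: it is a direct specialization of the principle that a Cesàro-type limit with a fixed finite numerator and denominator tending to infinity vanishes. The only facts used beyond the immediate stabilization $T_n(\phi,F)=F$ are the existence of the limit, quoted from \cite{SZ}, and the non-emptiness of $\Fin_i(M)$, both of which are routine.
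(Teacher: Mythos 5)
Your argument is correct and is exactly the direct verification the paper intends, since the paper only remarks that the fact is ``a direct consequence of the definition'' (citing \cite{SZ}): $\phi$-invariance gives $T_n(\phi,F)=F$, so $i(T_n(\phi,F))=i(F)<\infty$ is constant and $H_i(\phi,F)=0$, and since every submodule in $\Fin_i(M)$ is invariant under $0_M$ and $1_M$, taking suprema yields $\ent_i(0_M)=\ent_i(1_M)=0$. No gaps.
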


Many properties of the $i$-entropy were studied in \cite{SZ}. In particular, the following properties hold.

\begin{fact}\emph{\cite{SZ}}\label{properties-i}
Let $R$ be a ring and $i$ a discrete additive invariant of $\mod_R$. Let $(M,\phi)\in\af_R$.
\begin{itemize}
\item[(a)] If $(N,\psi)\in\af_R$ and $\f$ and $\psi$ are conjugated, then $\ent_i(\phi)=\ent_i(\eta)$.
\item[(b)] If $k\in\N$, then $\ent_i(\phi^k) = k\cdot \ent_i(\phi)$. If $\phi$ is an automorphism, then $\ent_i(\phi^k) = |k|\cdot \ent_i(\phi)$ for every $k\in \Z$.
\item[(c)] For every $M\in\mod_R$, $\ent_i(\beta_M)=i(M)$.
\item[(d)] If $N$ is a $\phi$-invariant submodule of $M$, then $\ent_i(\phi)\geq\ent_i(\phi\restriction_N)$.
\item[(e)] If $M=M_1\times M_2$ in $\mod_R$ and $\phi_j\in\End(M_j)$, for $j=1,2$, then $\ent_i(\phi_1\times\phi_2)=\ent_i(\phi_1)+\ent_i(\phi_2)$.
\item[(f)] \emph{\cite[Corollary 2.18(i)]{SVV}} Let $(M,\phi)\in\af_R$ and $N$ a $\phi$-invariant submodule of $M$. Then $\ent_i(\phi\restriction_N)=\ent_i(\phi\restriction_{N_{i*}})$.
\end{itemize}
\end{fact}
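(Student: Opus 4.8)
The plan is to argue directly from the defining formula $\ent_i(\phi)=\sup\{H_i(\phi,F):F\in\Fin_i(M)\}$, using that $i$ is an isomorphism invariant, that $i$ is subadditive (a consequence of additivity), and that the trajectory operation $T_n(-,-)$ commutes with morphisms in $\af_R$. Parts (a), (d), (e) are then immediate. For (a), an isomorphism $f\colon(M,\phi)\to(N,\psi)$ in $\af_R$ satisfies $f(T_n(\phi,F))=T_n(\psi,f(F))$ and $i(f(F))=i(F)$, and carries $\Fin_i(M)$ bijectively onto $\Fin_i(N)$, so the two suprema coincide. For (d), if $F\le N$ then $F\in\Fin_i(N)\subseteq\Fin_i(M)$ and $T_n(\phi\restriction_N,F)=T_n(\phi,F)$, so $\ent_i(\phi\restriction_N)$ is a supremum over a subfamily of the one defining $\ent_i(\phi\restriction_M)$. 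For (e), any $F\in\Fin_i(M_1\times M_2)$ lies in $\pi_1(F)\times\pi_2(F)$ with each $\pi_j(F)\in\Fin_i(M_j)$, while $T_n(\phi_1\times\phi_2,F_1\times F_2)=T_n(\phi_1,F_1)\times T_n(\phi_2,F_2)$; combining this with $i$ being additive on split extensions, dividing by $n$, letting $n\to\infty$ and passing to suprema gives $\ent_i(\phi_1\times\phi_2)=\ent_i(\phi_1)+\ent_i(\phi_2)$.

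For (b) I would sandwich $T_n(\phi^k,F)\subseteq T_{nk}(\phi,F)\subseteq T_n\big(\phi^k,T_k(\phi,F)\big)$, observing that $T_k(\phi,F)\in\Fin_i(M)$ since subadditivity of $i$ gives $i(T_k(\phi,F))\le k\,i(F)<\infty$; dividing by $n$, letting $n\to\infty$ and taking suprema over $F$ yields $\ent_i(\phi^k)\le k\,\ent_i(\phi)\le\ent_i(\phi^k)$, and the automorphism case follows by the same inequality applied to $\phi^{-1}$. For (c), when $i(M)<\infty$ one takes $F$ to be the coordinate-zero copy of $M$ inside $M^{(\N)}$: then $T_n(\beta_M,F)$ is the direct sum of the first $n$ coordinates, so $i(T_n(\beta_M,F))=n\,i(M)$ and $H_i(\beta_M,F)=i(M)$; for the reverse bound one notes that every finitely generated member of $\Fin_i(M^{(\N)})$ has support in the first $s$ coordinates for some $s$, whence $i(T_n(\beta_M,F))\le(s+n)\,i(M)$ and $H_i(\beta_M,F)\le i(M)$ (here the reduction to finitely generated $F$, valid under upper continuity of $i$, enters). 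When $i(M)=\infty$, feeding in submodules of the zeroth copy of $M$ with $i$-value tending to $\infty$ forces $\ent_i(\beta_M)=\infty=i(M)$.

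The substantive point is (f), and the natural route is through the Addition Theorem for the $i$-entropy (Theorem~\ref{AT-i}) together with the vanishing property recorded in \eqref{veryLAST-eq}. By construction $N_{i*}=\pi^{-1}(z_i(M/N))$ for the projection $\pi\colon M\to M/N$, so $N_{i*}/N=z_i(M/N)$, a $\overline\phi$-invariant submodule of $M/N$ on which the endomorphism induced by $\phi\restriction_{N_{i*}}$ is precisely $\overline\phi\restriction_{z_i(M/N)}$. Applying the Addition Theorem to the flow $(N_{i*},\phi\restriction_{N_{i*}})$ and its $\phi$-invariant submodule $N$ gives
\[
\ent_i(\phi\restriction_{N_{i*}})=\ent_i(\phi\restriction_N)+\ent_i\big(\overline\phi\restriction_{z_i(M/N)}\big),
\]
and the last summand vanishes by \eqref{veryLAST-eq} applied to $(M/N,\overline\phi)$; since the inequality $\ge$ is already contained in (d), the content is the inequality $\le$. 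I expect (f) to be the main obstacle: it is the only part that is not a routine trajectory manipulation, and it leans on the Addition Theorem, which---under the hypotheses on $i$ in force (e.g.\ $i$ a length function, as in \cite{SVV})---is itself a nontrivial result. A self-contained alternative would have to verify directly that enlarging a trajectory inside $N$ by the $i$-negligible layer $z_i(M/N)$ leaves every normalized limit $i(T_n(\phi,F))/n$ unchanged, which is essentially re-proving the relevant special case of that theorem.
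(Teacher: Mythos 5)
First, a point of orientation: the paper does not prove Fact \ref{properties-i} at all --- it is quoted from \cite{SZ}, with item (f) taken from \cite[Corollary 2.18(i)]{SVV} --- so your proposal is really a reconstruction of proofs from the cited literature rather than something to match against an argument in the text. Within that scope, your treatments of (a), (d) and (e) are correct, and the sandwich $T_n(\phi^k,F)\subseteq T_{nk}(\phi,F)=T_n\bigl(\phi^k,T_k(\phi,F)\bigr)$ together with subadditivity of $i$ does settle (b); for the automorphism case one should add the one-line observation $T_n(\phi^{-1},F)=\phi^{-(n-1)}(T_n(\phi,F))$, which gives $\ent_i(\phi^{-1})=\ent_i(\phi)$.

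The genuine gaps are in (c) and (f). In (c) the problem is the upper bound: $\Fin_i(M^{(\N)})$ consists of \emph{all} submodules of finite $i$-value, and the standing hypothesis is only that $i$ is a discrete additive invariant, so the reduction to finitely generated (finite-support) $F$ is not available; even if you add upper continuity of $i$, the interchange you need goes the wrong way, since $H_i(\beta_M,F)=\inf_n i(T_n(\beta_M,F))/n$ while upper continuity only gives, for each fixed $n$, $i(T_n(\beta_M,F))=\sup\{i(T_n(\beta_M,G)):G\subseteq F\ \text{finitely generated}\}$, and $\inf_n\sup_G\leq\sup_G\inf_n$ fails in general. That reduction is essentially the upper continuity of $\ent_i$ itself, one of the nontrivial results of \cite{SVV}, not a parenthetical remark. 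The case $i(M)=\infty$ is worse: you assume $M$ has submodules of arbitrarily large finite $i$-value, which an additive discrete invariant need not provide; for instance, on $\abg$ take $i(A)=0$ for torsion $A$ and $i(A)=\infty$ otherwise, so that $\Fin_i(\Z^{(\N)})=\{0\}$ and $\ent_i(\beta_{\Z})=0\neq\infty=i(\Z)$. So that step cannot be closed as stated; item (c) is quoted loosely here and tacitly carries the hypotheses of \cite{SZ}. In (f), your appeal to Theorem \ref{AT-i} is not licensed: that theorem requires $i$ to be a discrete \emph{length} function and the ambient module to be \emph{locally $i$-finite}, whereas in your application the ambient module is $N_{i*}\supseteq N$ with $N$ arbitrary, so local $i$-finiteness can fail (for $i=\log|-|$ it amounts to $N$ being torsion). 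Since the paper later invokes (f) for arbitrary $(M,\phi)\in\af_R$, a derivation from the Addition Theorem covers only a special case; the direct trajectory argument you set aside as ``re-proving a special case'' --- checking that enlarging $F\subseteq N$ by the $i$-negligible layer $z_i(M/N)$ changes $i(T_n(\phi,F))$ only by $o(n)$ --- is in fact the intended route of \cite[Corollary 2.18(i)]{SVV}, precisely because it needs neither local $i$-finiteness nor the Addition Theorem.
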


This fact implies that for an additive invariant $i$ of $\mod_R$, the function $\ent_i$ satisfies, beyond (A0), also (A1), (A4$^*$) and (A5) (as a consequence of items (a), (b) and (c), respectively). In Theorem \ref{enti-ef} we show that $\ent_i$ is an entropy function, but to this end we need to restrict appropriately its domain and impose two more conditions on $i$.

\subsubsection{The entropy function of a length function}

It is proved in \cite{SVV} that the Addition Theorem for $\ent_i$ holds for discrete length functions $i$ in the class of locally $i$-finite $R$-modules; in other words, $\ent_i$ restricted to $\af_{\mathrm{lFin}_i(R)}$ satisfies (A2$^*$).

\begin{theorem}[Addition Theorem]\label{AT-i}
Let $R$ be a ring and $i:\mod_R\to\R_+\cup\{\infty\}$ a discrete length function. Let $M\in\mod_R$ be locally $i$-finite, $\phi\in\End(M)$, $N$ a $\phi$-invariant submodule of $M$ and $\overline\phi:M/N\to M/N$ the endomorphism induced by $\phi$. Then $\ent_i(\phi)=\ent_i(\phi\restriction_N)+\ent_i(\overline\phi).$
\end{theorem}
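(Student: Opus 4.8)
The Addition Theorem for $\ent_i$ (Theorem \ref{AT-i}) is an "additivity" statement of the type (A2$^*$), and the natural strategy is to reduce the general exact sequence $0\to N\to M\to M/N\to 0$ to the two inequalities $\ent_i(\phi)\geq\ent_i(\phi\restriction_N)+\ent_i(\overline\phi)$ and $\ent_i(\phi)\leq\ent_i(\phi\restriction_N)+\ent_i(\overline\phi)$, and then prove each separately. The monotonicity part, $\ent_i(\phi)\geq\ent_i(\phi\restriction_N)$, is already Fact \ref{properties-i}(d); and $\ent_i(\phi)\geq\ent_i(\overline\phi)$ should follow by a direct argument: given a finite (in the $i$-sense) submodule $\overline F$ of $M/N$, lift its generators to elements of $M$ to obtain a submodule $F\in\Fin_i(M)$ with $\pi(F)=\overline F$, using that $i$ is additive (so $i(F)\leq i(\ker\pi\restriction_F)+i(\overline F)<\infty$ once we choose $F$ so that $\ker\pi\restriction_F$ is itself $i$-finite — here one uses the locally $i$-finite hypothesis to control the kernel contribution). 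Then $\pi(T_n(\phi,F))=T_n(\overline\phi,\overline F)$, so by upper continuity of $i$ together with additivity one gets $i(T_n(\overline\phi,\overline F))\leq i(T_n(\phi,F))$, whence $H_i(\overline\phi,\overline F)\leq H_i(\phi,F)\leq\ent_i(\phi)$ after dividing by $n$ and taking the limit. Taking the supremum over $\overline F$ yields $\ent_i(\overline\phi)\leq\ent_i(\phi)$. Combining, $\ent_i(\phi)\geq\ent_i(\phi\restriction_N)+\ent_i(\overline\phi)$ will still need a genuine argument, not just the two separate monotonicities; the usual device is to work with a single $F\in\Fin_i(M)$, split $T_n(\phi,F)$ via the exact sequence $0\to T_n(\phi,F)\cap N\to T_n(\phi,F)\to T_n(\overline\phi,\pi(F))\to 0$, apply additivity of $i$, observe $T_n(\phi,F)\cap N\supseteq$ a cofinal piece of a trajectory of $\phi\restriction_N$, and pass to the limit — this is where one must be careful that the "leftover" contribution grows subexponentially.

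For the reverse inequality $\ent_i(\phi)\leq\ent_i(\phi\restriction_N)+\ent_i(\overline\phi)$, the standard approach is: fix $F\in\Fin_i(M)$ and write $i(T_n(\phi,F))=i(T_n(\phi,F)\cap N)+i\bigl(T_n(\phi,F)/(T_n(\phi,F)\cap N)\bigr)$ using additivity of $i$. The second summand equals $i(T_n(\overline\phi,\pi(F)))\leq i(T_n(\overline\phi,\overline F'))$ for a suitable $i$-finite $\overline F'\supseteq\pi(F)$, so after dividing by $n$ and taking $\limsup$ it is bounded by $\ent_i(\overline\phi)$. The delicate part is the first summand $i(T_n(\phi,F)\cap N)$: one must show $\limsup_n \frac1n i(T_n(\phi,F)\cap N)\leq\ent_i(\phi\restriction_N)$. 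Here one uses Fact \ref{properties-i}(f) — invariance under the $\mathfrak t_{z_i}$-closure — and the fact that $T_n(\phi,F)\cap N$ is contained in the $i*$-closure (inside $N$) of a trajectory $T_{n+k}(\phi\restriction_N,F_0)$ for a fixed finite $F_0\subseteq N$ and a fixed shift $k$, which is the heart of the argument in \cite{SVV}.

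The main obstacle, as usual for Addition Theorems, will be precisely this control of $i(T_n(\phi,F)\cap N)$ from above: one needs to produce, essentially uniformly in $n$, a finite $i$-bounded submodule of $N$ whose trajectory captures $T_n(\phi,F)\cap N$ up to the $i*$-closure, and this requires the local $i$-finiteness of $M$ (to guarantee that a generating set of $F$ has finite $i$-value trajectories) together with upper continuity and discreteness of $i$ (discreteness prevents pathological accumulation of small positive values and is what makes the limits behave). In broad strokes, though, I would simply invoke the detailed combinatorial lemmas of \cite{SVV} for this step: the theorem as stated is a restatement of a result proved there, so the proof in the paper is likely to be a short reduction citing \cite[Theorem 2....]{SVV} (or an indication that $\ent_i$ restricted to $\af_{\mathrm{lFin}_i(R)}$ satisfies (A2$^*$) was established there), rather than a from-scratch verification.
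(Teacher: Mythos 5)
Your guess at the end is exactly right: the paper gives no proof of Theorem \ref{AT-i} at all, but imports it from \cite{SVV} (the sentence preceding the statement notes that the Addition Theorem for $\ent_i$ restricted to $\af_{\mathrm{lFin}_i(R)}$, i.e.\ axiom (A2$^*$) there, is proved in \cite{SVV} for discrete length functions). Since your proposal ultimately defers the delicate control of $i(T_n(\phi,F)\cap N)$ to the lemmas of \cite{SVV} and treats the rest as a standard reduction, it matches the paper's approach of citing that result rather than reproving it.
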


Also an impressive Uniqueness Theorem is proved in \cite{SVV} for $\ent_i$, with $i$ discrete length function.
This theorem in particular generalizes the Uniqueness Theorem proved in \cite{SZ} for the entropy function $\ent_{r_0}$ of $\af_\abg$ and the Uniqueness Theorem for the algebraic entropy $\ent$ of $\af_\mathbf{TorAbGrp}$ (see Fact \ref{properties-ent}(f)).

\bigskip
Let $i$ be a discrete length function on $\mod_R$, and consider $\ent_i:\af_R\to\R_+\cup\{\infty\}$.
Since $\mathrm{lFin}_i(R)$ is a cocomplete abelian category, and $\af_{\mathrm{lFin}_i(R)}$ is a cocomplete abelian category as well, it is possible to consider $\ent_i$ restricted to $\af_{\mathrm{lFin}_i(R)}$ and to prove the following

\begin{theorem}\label{enti-ef}
Let $i$ be a discrete length function on $\mod_R$. Then $\ent_i:\af_{\mathrm{lFin}_i(R)}\to \R_+\cup\{\infty\}$ is an entropy function.
\end{theorem}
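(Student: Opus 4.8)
The plan is to verify that $h=\ent_i$ satisfies the three axioms (A1), (A2), (A3) of Definition \ref{h-def} on the category $\mathfrak M=\af_{\mathrm{lFin}_i(R)}$ (which is well-powered and cocomplete abelian, as recalled), using only the already recorded properties of $\ent_i$. Axiom (A1) is immediate: the zero object is $(0,0_0)$ with $\ent_i(0_0)=0$, and isomorphic flows have conjugate endomorphisms, so Fact \ref{properties-i}(a) gives the invariance. For (A2) I would invoke the Addition Theorem \ref{AT-i}: an exact sequence $0\to(N,\psi)\to(M,\phi)\to(Q,\chi)\to0$ in $\af_{\mathrm{lFin}_i(R)}$ amounts to a $\phi$-invariant submodule $N\leq M$ with $\psi=\phi\restriction_N$, $Q=M/N$ and $\chi=\overline\phi$; here $N$ and $Q$ are automatically locally $i$-finite, since $\mathrm{lFin}_i(R)$ is closed under submodules and quotients, so $\ent_i(\phi)=\ent_i(\psi)+\ent_i(\chi)$. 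As $\ent_i\geq0$, this forces $\ent_i(\phi)=0$ if and only if $\ent_i(\psi)=0=\ent_i(\chi)$, which is (A2) (the converse implication being trivial anyway, via $0\to0\to M\to M\to0$).

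Only (A3) requires real work. Write $M=\bigoplus_jM_j$ (locally $i$-finite by closure under direct sums) and $\Phi=\bigoplus_j\phi_j$. One implication follows from monotonicity (Fact \ref{properties-i}(d)): each $(M_j,\phi_j)$ is a $\Phi$-invariant summand, so $\ent_i(\phi_j)\leq\ent_i(\Phi)$, and hence $\ent_i(\Phi)=0$ implies all $\ent_i(\phi_j)=0$. For the converse, assume $\ent_i(\phi_j)=0$ for every $j$. First I would reduce to the case $z_i(M)=0$: since $i$ is additive, $z_i(\bigoplus_jM_j)=\bigoplus_jz_i(M_j)$, this is a $\Phi$-invariant submodule with $\ent_i(\Phi\restriction_{z_i(M)})=0$ by \eqref{veryLAST-eq}, so the Addition Theorem \ref{AT-i} gives $\ent_i(\Phi)=\ent_i(\overline\Phi)$, where $\overline\Phi=\bigoplus_j\overline{\phi_j}$ acts on $M/z_i(M)=\bigoplus_j\bigl(M_j/z_i(M_j)\bigr)$; likewise $\ent_i(\overline{\phi_j})=\ent_i(\phi_j)=0$. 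Thus we may assume $z_i(M_j)=0$ for all $j$, hence $z_i(M)=0$.

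Under the assumption $z_i(M)=0$, every nonzero cyclic submodule of $M$ has $i$-value $\geq\delta$, where $\delta>0$ is the least positive (finite) value of $i$ (this is where discreteness of $i$ is used). The crux is then the following lemma: every submodule $F\leq M=\bigoplus_jM_j$ with $i(F)<\infty$ has finite support, i.e. $F\subseteq\bigoplus_{j\in F_0}M_j$ for some finite $F_0\subseteq J$. Indeed, if the $j$-th projection of $F$ were nonzero for infinitely many $j$, one builds a strictly increasing chain $0=W_0\subsetneq W_1\subsetneq\cdots$ of submodules of $F$ by repeatedly adjoining a cyclic submodule meeting a coordinate not yet used; since each $W_k$ has finite support, each quotient $W_{k}/W_{k-1}$ maps onto a nonzero cyclic submodule of some $M_j$, so $i(W_k/W_{k-1})\geq\delta$ by additivity, whence $i(F)\geq i(W_k)\geq k\delta$ for all $k$, contradicting $i(F)<\infty$. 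Granting the lemma, for $F\in\Fin_i(M)$ choose such an $F_0$; then $T_n(\Phi,F)\subseteq\bigoplus_{j\in F_0}M_j$ for every $n$, so $H_i(\Phi,F)\leq\ent_i\bigl(\bigoplus_{j\in F_0}\phi_j\bigr)=\sum_{j\in F_0}\ent_i(\phi_j)=0$ by finite additivity (Fact \ref{properties-i}(e)). Taking the supremum over all $F$ yields $\ent_i(\Phi)=0$, proving (A3).

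The main obstacle is exactly the finite-support lemma of the last paragraph; it is what legitimizes passing from finite to arbitrary coproducts, and it is the point where the hypotheses genuinely bite: discreteness of $i$ for the lower bound $\delta$, and the length-function (upper continuity) hypothesis through \eqref{veryLAST-eq}, the good behaviour of the radical $z_i$, and the Addition Theorem \ref{AT-i}. For a general additive invariant neither the reduction nor the lemma need hold, so this is the step that dictates the hypotheses of the statement; everything else reduces to routine use of Fact \ref{properties-i} and Theorem \ref{AT-i}.
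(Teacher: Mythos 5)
Your proposal is correct, and for (A1) and (A2) it coincides with the paper: both handle (A2) by feeding the Addition Theorem \ref{AT-i} through the identification of short exact sequences in $\af_{\mathrm{lFin}_i(R)}$ with $\phi$-invariant submodules and induced endomorphisms. Where you genuinely diverge is (A3): the paper disposes of it in one line by citing the result of \cite{SVV} that $\ent_i$ is an \emph{upper continuous} invariant of $\mod_{R[t]}$ and then invoking Remark \ref{A3d}, which in fact yields the stronger direct-limit form of (A3), not just the coproduct form. You instead give a self-contained coproduct argument: monotonicity (Fact \ref{properties-i}(d)) for one direction, and for the other a reduction modulo the radical $z_i$ (legitimate, since $z_i$ is a hereditary radical for length functions and $z_i(\bigoplus_j M_j)=\bigoplus_j z_i(M_j)$ by additivity, with \eqref{veryLAST-eq} and Theorem \ref{AT-i} removing it), followed by your finite-support lemma, whose chain construction is sound: each new generator hits a fresh coordinate, so each successive quotient surjects onto a nonzero cyclic submodule of some $M_j$, forcing $i(F)\geq k\delta$ for all $k$, where discreteness supplies the positive lower bound $\delta$ (in the degenerate case where $i$ has no finite positive value the same chain gives $i(F)=\infty$ at the first step, so nothing is lost). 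Once finite-$i$ submodules are confined to finite subcoproducts, finite additivity of $\ent_i$ (Fact \ref{properties-i}(e)) finishes the job. The trade-off: the paper's route is shorter and quietly delivers upper continuity of $\ent_i$ itself, which is what Remark \ref{A3d} really wants; your route avoids importing that external result, at the price of using discreteness directly and proving only the coproduct axiom (A3) as literally stated, which is all the theorem requires.
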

\begin{proof}
It remains to note that (A2) holds according to Theorem \ref{AT-i}, while (A3) follows from the fact (established in \cite{SVV}) that $\ent_i$ is an upper continuous invariant of $\mod_{R[t]}$ (see Remark \ref{A3d}).
\end{proof}

The restriction to the subcategory $\mathfrak M = \mathrm{lFin}_i(R)$ seems to be necessary, as $\ent_i$  may fail to satisfy (A2) (even monotonicity under taking induced endomorphisms on quotients) in $\mod_{R[t]}$. 

\medskip
In this particular case, for $$\mathcal Z_{\ent_i}=\{(M,\phi)\in\af_{\mathrm{lFin}_i(R)}: \ent_i(\phi)=0\}\ \text{and}\ \mathcal P_{\ent_i}=\{(M,\phi)\in\mathrm{lFin}_i(R):\ent_i(\phi)>\!\!>0\},$$  by Theorem \ref{tt}  we have that $\mathfrak t_{\ent_i}=(\mathcal Z_{\ent_i},\mathcal P_{\ent_i})$ is a hereditary torsion theory in $\mathrm{lFin}_i(R)$. Moreover, 
 $z_i(M)\subseteq \P_{\ent_i}(M,\phi)$ for every $(M,\phi)\in\af_{\mathrm{lFin}_i(R)}$.


\subsubsection{The Pinsker radical of the $i$-entropy}

Inspired by \eqref{tphi}, for $(M,\phi)\in\af_R$ define 
$$
t^i_\phi(M)=\{x\in M: i(T(\phi,Rx))<\infty\}.
$$ 
In particular,  $t^i_\phi(M)\subseteq f_i(M)$.

For $i=\log|-|$ and $(G,\phi)\in\af_\mathbf{TorAbGrp}$ the subgroup $t^i_\phi(G)$ coincides with the $\phi$-torsion subgroup $t_\phi(G)$  recalled in \eqref{tphi}.
Now we extend this property to $\ent_i$: 
 
\begin{theorem}\label{P=t}
Let $(M,\phi)\in\af_{\mathrm{lFin}_i(R)}$. Then $\P_{\ent_i}(M,\phi)=t^i_\phi(M)$.
\end{theorem}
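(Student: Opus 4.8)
The plan is to prove the two inclusions $t^i_\phi(M)\subseteq\P_{\ent_i}(M,\phi)$ and $\P_{\ent_i}(M,\phi)\subseteq t^i_\phi(M)$ separately, using in an essential way the characterization of $\P_{\ent_i}$ as the greatest $\phi$-invariant submodule with zero entropy (Lemma \ref{hP=0}) together with the upper continuity of $\ent_i$ coming from Theorem \ref{enti-ef} and Remark \ref{A3d}.

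For the inclusion $t^i_\phi(M)\subseteq\P_{\ent_i}(M,\phi)$: first I would check that $t^i_\phi(M)$ is a $\phi$-invariant submodule of $M$. For $\phi$-invariance this is clear, since $T(\phi,R\phi(x))\subseteq T(\phi,Rx)$. For closure under sums one uses $T(\phi,R(x+y))\subseteq T(\phi,Rx)+T(\phi,Ry)$ together with additivity of $i$ (as a length function $i$ is additive, so $i$ of a sum of two submodules is bounded by the sum of the two values). Then, given $x\in t^i_\phi(M)$, the submodule $N_x:=T(\phi,Rx)$ is $\phi$-invariant with $i(N_x)<\infty$; hence $N_x\in\mathrm{Fin}_i(N_x)$ is a $\phi$-invariant element of $\mathrm{Fin}_i$, so by the Fact preceding Fact \ref{properties-i} we get $\ent_i(\phi\restriction_{N_x})=0$. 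Thus $N_x\subseteq\P_{\ent_i}(M,\phi)$ by Lemma \ref{hP=0}, and since $x\in N_x$ and $x$ was arbitrary we obtain $t^i_\phi(M)\subseteq\P_{\ent_i}(M,\phi)$.

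For the reverse inclusion $\P_{\ent_i}(M,\phi)\subseteq t^i_\phi(M)$: let $P=\P_{\ent_i}(M,\phi)$, so $\ent_i(\phi\restriction_P)=0$ by Lemma \ref{hP=0}. Take $x\in P$ and consider $N_x=T(\phi,Rx)$, a $\phi$-invariant submodule of $P$. By monotonicity under restriction to $\phi$-invariant submodules (Fact \ref{properties-i}(d)) we get $\ent_i(\phi\restriction_{N_x})\leq\ent_i(\phi\restriction_P)=0$, so $\ent_i(\phi\restriction_{N_x})=0$. I now want to deduce $i(N_x)<\infty$. Here I would use the description of $\ent_i$ via trajectories: writing $H_i(\phi,F)=\lim_n i(T_n(\phi,F))/n$, zero entropy of $\phi\restriction_{N_x}$ forces, for the finitely generated submodule $Rx$, the sequence $i(T_n(\phi,Rx))$ to grow subexponentially; but the trajectory $T(\phi,Rx)=N_x$ is the increasing union of the $T_n(\phi,Rx)$, and by upper continuity of the length function $i$ one has $i(N_x)=\sup_n i(T_n(\phi,Rx))$. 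The point to nail down is that this supremum is finite — equivalently that the increasing chain of submodules $T_n(\phi,Rx)$ stabilizes, or at least contributes only finitely much $i$-length. This is exactly where the argument in \cite{SVV,Simone} on the structure of locally $i$-finite modules and the behaviour of $\ent_i$ on them is needed, and I expect this finiteness step to be the main obstacle; one likely route is to invoke Fact \ref{properties-i}(f) (stability of $\ent_i$ under $\mathfrak t_{z_i}$-closures) together with the fact that $M\in\mathrm{lFin}_i(R)$ forces every cyclic submodule to have finite $i$-length, and then to bound $i(N_x)$ using the Addition Theorem \ref{AT-i} applied to the (eventually stationary) chain $T_n(\phi,Rx)$. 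Once $i(N_x)<\infty$ is established, $x\in t^i_\phi(M)$ by definition, completing the proof.

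Finally I would remark that in the special case $R=\Z$, $i=\log|-|$, restricted to $\af_{\mathbf{TorAbGrp}}$, this recovers the identity $\P_\ent(G,\phi)=t_\phi(G)$ of \eqref{DG1}, so Theorem \ref{P=t} is the announced generalization.
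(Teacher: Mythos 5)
Your first inclusion $t^i_\phi(M)\subseteq\P_{\ent_i}(M,\phi)$ is essentially fine: for $x\in t^i_\phi(M)$ the trajectory $N_x=T(\phi,Rx)$ is a $\phi$-invariant submodule with $i(N_x)<\infty$, so every $F\in\Fin_i(N_x)$ has $i(T_n(\phi,F))\leq i(N_x)$ bounded, hence $\ent_i(\phi\restriction_{N_x})=0$ and $N_x\subseteq\P_{\ent_i}(M,\phi)$ by Lemma \ref{hP=0} (your appeal to the Fact before Fact \ref{properties-i} only gives $H_i(\phi,N_x)=0$, but the bounded-trajectory argument closes that small gap immediately). The problem is the reverse inclusion. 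There you correctly reduce to showing that $\ent_i(\phi\restriction_{N_x})=0$, equivalently $H_i(\phi,Rx)=0$, implies $i(T(\phi,Rx))<\infty$, and you flag this as ``the main obstacle'' --- but none of the tools you propose actually delivers it. Fact \ref{properties-i}(f) (invariance of $\ent_i$ under $\mathfrak t_{z_i}$-closure) says nothing about finiteness of $i$ on a trajectory; local $i$-finiteness of $M$ only gives $i(Rx)<\infty$, not $i(T(\phi,Rx))<\infty$; and invoking the Addition Theorem on an ``eventually stationary'' chain $T_n(\phi,Rx)$ is circular, since the eventual stationarity (in $i$-value) of that chain is precisely what has to be proved. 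Note that zero entropy only says $i(T_n(\phi,Rx))$ grows sublinearly, which by itself is compatible with $\sup_n i(T_n(\phi,Rx))=\infty$; so the implication is genuinely nontrivial and uses the discreteness of $i$ together with the fact that the increments $i(T_{n+1}(\phi,Rx))-i(T_n(\phi,Rx))$ are non-increasing, forcing them to vanish eventually when the entropy is zero.

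This missing step is exactly what the paper imports wholesale: its proof consists of quoting \cite[Proposition 1.10(i)]{SZ}, which states that for $F\in\Fin_i(M)$ one has $H_i(\phi,F)=0$ if and only if $i(T(\phi,F))<\infty$. From that equivalence the paper concludes in one line that $\ent_i(\phi)=0$ if and only if $M=t^i_\phi(M)$, hence that $t^i_\phi(M)$ is the greatest $\phi$-invariant submodule of zero entropy and therefore coincides with $\P_{\ent_i}(M,\phi)$ by Lemma \ref{hP=0}. So your overall architecture (two inclusions via Lemma \ref{hP=0} and monotonicity) is compatible with the paper's argument, but as written your proposal does not prove the key finiteness criterion and does not cite it either; you should either quote \cite[Proposition 1.10(i)]{SZ} at that point or reproduce its proof via the non-increasing increments and the discreteness hypothesis on $i$.
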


\begin{proof} According to {\cite[Proposition 1.10(i)]{SZ}}, for $F\in\Fin_i(M)$, $H_i(\phi,F) = 0$ if and only if $i(T(\phi,F))<\infty$. Consequently, $\ent_i(\phi)=0$ if and only if $M=t^i_\phi(M)$.
In particular, $\ent_i(\phi\restriction_{t^i_\phi(M)})=0$, so 
$t^i_\phi(M)$ is the greatest $\phi$-invariant submodule of $M$ with this property.
Since this is also the defining property of  $\P_{\ent_i}(M,\phi)$ (see Lemma \ref{hP=0}),
we conclude that $\P_{\ent_i}(M,\phi)=t^i_\phi(M)$.
\end{proof}

For $(M,\phi)\in\af_{\mathrm{lFin}_i(R)}$, 
$\QQ(M,\phi)\subseteq \P_{\ent_i}(M,\phi)$ by Theorem \ref{semiA6}. This inclusion can be strict as Example \ref{example} will show.

\medskip
The next theorem improves Corollary \ref{Corollary:Ps_vs_W}.

\begin{theorem}\label{A<P}
Let $(M,\phi)\in\af_{\mathrm{lFin}_i(R)}$. Then $\mathfrak A(M,\phi)\subseteq \P_{\ent_i}(M,\phi)$.  
If $R$ is an integral domain, then $\P_{\ent_i}(M,\phi)\subseteq \mathfrak W(M,\phi)$.
\end{theorem}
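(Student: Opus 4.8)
The plan is to route both inclusions through Theorem~\ref{P=t}, which identifies $\P_{\ent_i}(M,\phi)$ with $t^i_\phi(M) = \{x\in M : i(T(\phi,Rx))<\infty\}$. Thus the whole statement becomes a matter of estimating the $i$-value of the single cyclic trajectory $T(\phi,Rx)$: I expect that for $x\in\mathfrak A(M,\phi)$ this trajectory is $i$-finite (which yields the first inclusion), while for $x\notin\mathfrak W(M,\phi)$ and $R$ a domain it is $i$-infinite (which yields the second). Both ends use only additivity of $i$, i.e.\ (A2$^*$), together with the standing hypothesis $i\not\equiv 0$; in particular (A0) and (A4$^*$) are not invoked, so this genuinely sharpens Corollary~\ref{Corollary:Ps_vs_W} on the left.

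\emph{First inclusion.} Take $x\in\mathfrak A(M,\phi)$. By the definition of $\mathfrak A$ (see also Remark~\ref{remark}(d)) there is a monic polynomial $t^k-\sum_{j=0}^{k-1}a_jt^j\in R[t]$ annihilating $x$, i.e.\ $\phi^k(x)=\sum_{j=0}^{k-1}a_j\phi^j(x)$; hence $Rx+R\phi(x)+\dots+R\phi^{k-1}(x)$ is $\phi$-invariant and equals $T(\phi,Rx)$, so $T(\phi,Rx)$ is finitely generated as an $R$-module. Since $M$ is locally $i$-finite, each cyclic module $R\phi^j(x)$ has finite $i$; and as a finite sum of submodules is a quotient of their direct sum, additivity of $i$ gives $i(T(\phi,Rx))\le\sum_{j=0}^{k-1}i(R\phi^j(x))<\infty$. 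By Theorem~\ref{P=t} this means $x\in t^i_\phi(M)=\P_{\ent_i}(M,\phi)$, so $\mathfrak A(M,\phi)\subseteq\P_{\ent_i}(M,\phi)$.

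\emph{Second inclusion.} Assume $R$ is a domain and argue as in the proof of Theorem~\ref{PvsW}. Suppose, towards a contradiction, that $x\in\P_{\ent_i}(M,\phi)\setminus\mathfrak W(M,\phi)$; then $x\neq 0$ and no nonzero polynomial of $R[t]$ annihilates $x$, so the $R[t]$-linear map $R[t]\to M$, $p(t)\mapsto p(\phi)(x)$, is injective with image $T(\phi,Rx)$. Hence $\phi\restriction_{T(\phi,Rx)}$ is conjugate to $\beta_R$ and, as an $R$-module, $T(\phi,Rx)\cong R^{(\N)}$. Iterating (A2$^*$) gives $i(R^{(n)})=n\,i(R)$ for every $n$, and since $i$ does not increase along submodules, $i(T(\phi,Rx))\ge n\,i(R)$ for all $n$. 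As $i\not\equiv 0$ forces $i(R)>0$ (Remark~\ref{R-toriR}), this yields $i(T(\phi,Rx))=\infty$, contradicting $x\in t^i_\phi(M)=\P_{\ent_i}(M,\phi)$ via Theorem~\ref{P=t}. Therefore $\P_{\ent_i}(M,\phi)\subseteq\mathfrak W(M,\phi)$.

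The routine content is the two additivity computations. The one step needing care — and where $i\not\equiv 0$ is essential — is the last one, that a copy of $R^{(\N)}$ forces $i=\infty$; this is the exact analogue of the step $h(\beta_R)>0$ in Theorem~\ref{PvsW}. One should also observe that every module produced (the trajectories $T(\phi,Rx)$, and the copy of $R^{(\N)}$ when it occurs) stays inside $\mathrm{lFin}_i(R)$, which is automatic since these are submodules of $M\in\mathrm{lFin}_i(R)$ and $\mathrm{lFin}_i(R)$ is closed under submodules, so the appeals to Theorem~\ref{P=t} are legitimate throughout.
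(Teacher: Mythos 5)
Your proof is correct, but it is not the route the paper takes: the paper's own proof of Theorem \ref{A<P} is essentially by citation --- the inclusion $\P_{\ent_i}(M,\phi)\subseteq\mathfrak W(M,\phi)$ is delegated to the general Theorem \ref{PvsW} (valid for any entropy function $h\not\equiv 0$ of $\af_R$ over a domain), and the inclusion $\mathfrak A(M,\phi)\subseteq\P_{\ent_i}(M,\phi)$ is attributed to external references, with no argument given in the text. You instead route both inclusions through the identification $\P_{\ent_i}(M,\phi)=t^i_\phi(M)$ of Theorem \ref{P=t} and estimate $i(T(\phi,Rx))$ directly: a monic relation makes $T(\phi,Rx)$ a finite sum of cyclic submodules, hence $i$-finite by additivity and local $i$-finiteness of $M$; while $x\notin\mathfrak W(M,\phi)$ over a domain makes $T(\phi,Rx)\cong R^{(\N)}$ as $R$-modules, hence $i$-infinite since $i(R)>0$. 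This buys a self-contained proof of the first inclusion (which the paper does not supply in the text) and, for the second, sidesteps a small looseness in the paper's appeal: Theorem \ref{PvsW} is stated for entropy functions of all of $\af_R$, whereas $\ent_i$ is only verified to be an entropy function on $\af_{\mathrm{lFin}_i(R)}$ (Theorem \ref{enti-ef}), so strictly one must rerun the argument of Theorem \ref{PvsW} inside that subcategory --- which is exactly what your specialization via $t^i_\phi$ does. What the paper's approach buys is generality and brevity, since Theorem \ref{PvsW} covers arbitrary entropy functions rather than just $\ent_i$. One point worth keeping explicit is your use of $i\not\equiv 0$ (equivalently $i(R)>0$, Remark \ref{R-toriR}): it does not appear in the statement of Theorem \ref{A<P} but is a standing assumption of that section and is genuinely needed --- for $i\equiv 0$ one has $\ent_i\equiv 0$, so $\P_{\ent_i}(M,\phi)=M$ while $\mathfrak W(M,\phi)$ can be proper --- just as $h\not\equiv 0$ is required in Theorem \ref{PvsW}.
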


\begin{proof} The inclusion $\P_{\ent_i}(M,\phi)\subseteq \mathfrak W(M,\phi)$ was proved in the general setting in Theorem \ref{PvsW}. 
The inclusion $\mathfrak A(M,\phi)\subseteq \P_{\ent_i}(M,\phi)$ follows substantially from \cite[Proposition 2.22]{Simone} (a proof appears also in \cite{DGV}). 
\end{proof}

Theorems \ref{P=t}, \ref{A<P} and \eqref{veryLAST-eq} give 
$$
z_i(M)+\mathfrak A(M,\phi)\subseteq \P_{\ent_i}(M,\phi)=t_\phi^i(M).
$$
As a consequence of Fact \ref{properties-i}(f), since both $z_i(M)$ and $\mathfrak A(M,\phi)$ are contained in  $\mathfrak A(M,\phi)_{i*}$, one can strengthen this as follows:  
\begin{corollary}
For every $(M,\phi)\in\af_R$, $\P_{\ent_i}(M,\phi)$ is $\mathfrak t_{z_i}$-closed in $M$. In particular, if $M\in\mathrm{lFin}_i(R)$, then $\mathfrak A(M,\phi)_{i*}\subseteq\P_{\ent_i}(M,\phi)$.
\end{corollary}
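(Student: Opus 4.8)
The plan is to deduce the corollary from the preceding results by showing that the $\mathfrak t_{z_i}$-closure does not enlarge $\P_{\ent_i}(M,\phi)$, and that taking this closure of $\mathfrak A(M,\phi)$ yields a submodule still contained in $\P_{\ent_i}(M,\phi)$. First I would recall the relevant facts already available: by Theorem \ref{P=t} we have $\P_{\ent_i}(M,\phi)=t^i_\phi(M)$ when $(M,\phi)\in\af_{\mathrm{lFin}_i(R)}$, and by Theorem \ref{A<P} (together with \eqref{veryLAST-eq}) we have $z_i(M)+\mathfrak A(M,\phi)\subseteq \P_{\ent_i}(M,\phi)$; moreover $z_i:\mod_R\to\mod_R$ is a hereditary radical, so the $\mathfrak t_{z_i}$-closure operation $N\mapsto N_{i*}$ is defined on submodules of $M$ via $N_{i*}=\pi^{-1}(z_i(M/N))$, with $\pi:M\to M/N$ the projection.

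The key claim to establish is that $\P_{\ent_i}(M,\phi)$ is $\mathfrak t_{z_i}$-closed, i.e.\ $z_i(M/\P_{\ent_i}(M,\phi))=0$. The natural route is: set $P=\P_{\ent_i}(M,\phi)$ and suppose $\bar x\in z_i(M/P)$ is nonzero, represented by $x\in M$; then $i(R\bar x)=0$, and I would lift this to control $T(\phi,Rx)$ modulo $P$. Since $\bar\phi$ is the induced endomorphism on $M/P$, the submodule $T(\bar\phi,R\bar x)=(T(\phi,Rx)+P)/P$ lies in $z_i(M/P)$ because $z_i$ is a $\bar\phi$-invariant submodule of $M/P$, so by (A2$^*$)/additivity of $i$ along the trajectory we get $i(T(\bar\phi,R\bar x))=0$, hence finite; pulling this back along $P\hookrightarrow T(\phi,Rx)+P$ and using $i(\phi\restriction_P$-trajectory$)$ being controlled — more precisely using that $\ent_i(\phi\restriction_P)=0$ means $i(T(\phi,F))<\infty$ for $F\in\Fin_i(P)$ by \cite[Proposition 1.10(i)]{SZ} — one concludes $i(T(\phi,Rx))<\infty$, so $x\in t^i_\phi(M)=P$, giving $\bar x=0$, a contradiction. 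This shows $P$ is $\mathfrak t_{z_i}$-closed.

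Granting this, the rest is formal: since $z_i(M)\subseteq P$ and $\mathfrak A(M,\phi)\subseteq P$, we have $z_i(M)+\mathfrak A(M,\phi)\subseteq P$; applying the closure operator, which is monotone and idempotent on $\mathfrak t_{z_i}$-closed subobjects, and using that $P$ is $\mathfrak t_{z_i}$-closed, we get $\mathfrak A(M,\phi)_{i*}\subseteq \bigl(z_i(M)+\mathfrak A(M,\phi)\bigr)_{i*}\subseteq P_{i*}=P=\P_{\ent_i}(M,\phi)$. Here the inclusion $\mathfrak A(M,\phi)_{i*}\subseteq(z_i(M)+\mathfrak A(M,\phi))_{i*}$ just uses monotonicity of the closure operator, and in fact one can invoke Fact \ref{properties-i}(f), $\ent_i(\phi\restriction_N)=\ent_i(\phi\restriction_{N_{i*}})$, applied to $N=\mathfrak A(M,\phi)$: from $\ent_i(\phi\restriction_{\mathfrak A(M,\phi)})=0$ (which holds since $\mathfrak A(M,\phi)\subseteq P$ and $\ent_i(\phi\restriction_P)=0$) one gets $\ent_i(\phi\restriction_{\mathfrak A(M,\phi)_{i*}})=0$, whence $\mathfrak A(M,\phi)_{i*}\subseteq P$ by the maximality in Theorem \ref{P=t}. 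This last phrasing is cleanest and avoids hand-waving about closure operators, so I would present that version.

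The main obstacle I anticipate is the first claim — verifying that $\P_{\ent_i}(M,\phi)$ is $\mathfrak t_{z_i}$-closed — since it requires a careful interplay between the trajectory submodules $T(\phi,Rx)$, the quotient $M/P$, and the finiteness criterion for $\ent_i$ from \cite{SZ}; one must be mindful that $R\bar x$ and $T(\bar\phi,R\bar x)$ need not be finitely generated in the $i$-finite sense a priori, so the reduction to elements of $\Fin_i$ and the application of additivity along the trajectory should be done with some care. Everything else is bookkeeping with the monotonicity and idempotency of the $\mathfrak t_{z_i}$-closure and the already-established inclusions, together with the observation from \eqref{veryLAST-eq} that $\ent_i(\phi\restriction_{z_i(M)})=0$ so that $z_i(M)\subseteq\P_{\ent_i}(M,\phi)$ indeed holds.
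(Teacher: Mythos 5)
Your handling of the ``in particular'' statement is exactly the paper's argument: apply Fact \ref{properties-i}(f) to $N=\mathfrak A(M,\phi)$ and conclude $\mathfrak A(M,\phi)_{i*}\subseteq\P_{\ent_i}(M,\phi)$ by maximality of the Pinsker radical. The problem is your proof of the first (and key) claim, that $P=\P_{\ent_i}(M,\phi)$ is $\mathfrak t_{z_i}$-closed: the pull-back step has a genuine gap. From $\bar x\in z_i(M/P)$ you correctly get $i\bigl(T(\bar\phi,R\bar x)\bigr)=0$, i.e.\ $i\bigl((T(\phi,Rx)+P)/P\bigr)=0$; but to conclude $i(T(\phi,Rx))<\infty$ you must control $i\bigl(T(\phi,Rx)\cap P\bigr)$, and nothing in your argument bounds it. The criterion you invoke from \cite[Proposition 1.10(i)]{SZ} (namely $\ent_i(\phi\restriction_P)=0$ if and only if $i(T(\phi,F))<\infty$ for all $F\in\Fin_i(P)$) applies only to trajectories of $i$-finite submodules \emph{of} $P$, whereas $Rx\not\subseteq P$ and $T(\phi,Rx)\cap P$ is a priori neither finitely generated nor of the form $T(\phi,F)$ with $F\in\Fin_i(P)$. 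Additivity gives $i(T(\phi,Rx))=i(T(\phi,Rx)\cap P)+i\bigl(T(\bar\phi,R\bar x)\bigr)$, and only the second summand is known to vanish; finiteness of the first is essentially equivalent to $x\in P$, which is what you are trying to prove. So, as written, the step ``one concludes $i(T(\phi,Rx))<\infty$'' does not follow --- this is precisely the obstacle you flagged but did not resolve.

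The gap closes at once with the same tool you already use for the second half, and this is the paper's route: apply Fact \ref{properties-i}(f) to $N=P$ itself. Since $P_{i*}=\pi^{-1}(z_i(M/P))$ and $\ent_i(\phi\restriction_{P_{i*}})=\ent_i(\phi\restriction_P)=0$, maximality of the Pinsker radical (Lemma \ref{hP=0}, or Theorem \ref{P=t} in the locally $i$-finite case) gives $P_{i*}\subseteq P$, i.e.\ $z_i(M/P)=0$. Alternatively, your element-wise argument can be repaired by replacing the pull-back with the Addition Theorem \ref{AT-i} applied to the extension $0\to P\to P+T(\phi,Rx)\to T(\bar\phi,R\bar x)\to 0$: the subobject has zero entropy by definition of $P$, the quotient has zero entropy because its $i$-value is $0$, so $\ent_i(\phi\restriction_{P+T(\phi,Rx)})=0$ and maximality forces $x\in P$. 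Note, however, that this fix (like your use of Theorem \ref{P=t} and of the $\Fin_i$ criterion) confines you to $M\in\mathrm{lFin}_i(R)$, whereas Fact \ref{properties-i}(f) is stated for all of $\af_R$, which is the generality claimed in the first assertion of the corollary.
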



The next example shows that ${\mathfrak A(M,\phi)}_{i*}$ (and so also $\mathfrak A(M,\phi)$) may be strictly contained in $\P_{\ent_i}(M,\phi)$, even if $z_i(M)=0$.

\begin{example}\label{example}
Let $\phi\in\End(\Q)$, that is, there exists $r\in\Q$ such that $\phi(x)=r x$ for every $x\in\Q$. Since $\Q$ has finite free-rank, $\ent_{r_0}(\phi)=0$, and hence $\P_{\ent_{r_0}}(\Q,\phi)=\Q$. 
\begin{itemize}
\item[(a)] If $r\in\Z\setminus\{0,1\}$, then $\QQ(\Q,\phi)=0$ and $\mathfrak A(\Q,\phi)=\Q=\mathfrak W(\Q,\phi)$ by Example \ref{LAST-ex}. Hence, $0=\QQ(\Q,\phi)\subset\mathfrak A(\Q,\phi)=\mathfrak A(\Q,\phi)_{r_0*}=\mathfrak W(\Q,\phi)=\Q=\P_{\ent_{r_0}}(\Q,\phi)$.
\item[(b)] If $r\not\in\Z$, then $\mathfrak A(\Q,\phi)=\QQ(\Q,\phi)=0$ and $\mathfrak W(\Q,\phi)=\Q$ by Example \ref{LAST-ex}. 
Therefore, $0=\QQ(\Q,\phi)=\mathfrak A(\Q,\phi)=\mathfrak A(\Q,\phi)_{r_0*}\subset\mathfrak W(\Q,\phi)=\Q=\P_{\ent_{r_0}}(\Q,\phi)$.
\end{itemize}
\end{example}

In case $R$ is an integral domain and $i(R)<\infty$ we have the following 

\begin{example} {\rm \cite{DGV}}
Let $R$ be an integral domain and $i$ a length function on $\mod_R$ such that $i(R)<\infty$. Then $\mathrm{lFin}_i(R)=\mod_R$
 and $i(R/I)=0$ for every proper ideal $I$ of $R$ (see Remark \ref{R-toriR}(b)). Moreover, for every $(M,\phi)\in\af_{R}$ the following conditions are equivalent:
\begin{itemize}
\item[(a)]$\ent_i(\phi)=0$;
\item[(b)]$\phi$ is pointwise algebraic over $R$.
\end{itemize}
One can prove that under the above hypotheses, $i$ coincides with the multiple $i(R)\mathrm{rank}_R$ of the rank $\mathrm{rank}_R$ over $R$ (see \cite[Theorem 2]{NR}).
For $R= \Z$ this generalizes the equivalence established in \cite{SZ} for the rank-entropy in $\abg$.
\end{example}

\begin{corollary}\label{LASTcorollary}
Let $R$ be an integral domain and $i$ a length function on $\mod_R$ such that $i(R)<\infty$.
Then $\P_{\ent_i}=\mathfrak W$ in $\af_R$.
\end{corollary}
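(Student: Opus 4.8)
The plan is to prove the two inclusions $\P_{\ent_i}\leq\mathfrak W$ and $\mathfrak W\leq\P_{\ent_i}$ of radicals of $\af_R$ separately, i.e.\ pointwise on every flow $(M,\phi)\in\af_R$. First I would observe that the hypotheses place us in the situation where the relevant earlier results apply to the whole of $\af_R$: since $i\not\equiv 0$ by the standing assumption (Remark \ref{R-toriR}), we have $0<i(R)<\infty$, and then Remark \ref{R-toriR}(b) gives $\mathrm{lFin}_i(R)=\mod_R$, hence $\af_{\mathrm{lFin}_i(R)}=\af_R$. In particular Theorems \ref{P=t} and \ref{A<P} hold for every $(M,\phi)\in\af_R$, and likewise the equivalence established in the preceding Example: for $(M,\phi)\in\af_R$ one has $\ent_i(\phi)=0$ if and only if $\phi$ is pointwise algebraic over $R$, that is, $M=\mathfrak W(M,\phi)$.

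The inclusion $\P_{\ent_i}\leq\mathfrak W$ is then nothing new: it is precisely the second statement of Theorem \ref{A<P}, which applies since $R$ is an integral domain.

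For the converse $\mathfrak W\leq\P_{\ent_i}$, I would fix $(M,\phi)\in\af_R$ and set $N=\mathfrak W(M,\phi)$. Since $R$ is a domain, $N$ is a $\phi$-invariant submodule of $M$, so $(N,\phi\restriction_N)$ is again an object of $\af_R$. Moreover $\phi\restriction_N$ is pointwise algebraic over $R$: for $x\in N$ there is $p(t)\in R[t]\setminus\{0\}$ with $p(\phi)(x)=0$, and $p(\phi\restriction_N)(x)=p(\phi)(x)=0$, so $N=\mathfrak W(N,\phi\restriction_N)$. By the equivalence quoted above this yields $\ent_i(\phi\restriction_N)=0$. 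Since by Lemma \ref{hP=0} the submodule $\P_{\ent_i}(M,\phi)$ is the greatest $\phi$-invariant submodule of $M$ on which $\phi$ has vanishing $i$-entropy, this forces $N=\mathfrak W(M,\phi)\subseteq\P_{\ent_i}(M,\phi)$. Combining the two inclusions gives $\P_{\ent_i}(M,\phi)=\mathfrak W(M,\phi)$ for every $(M,\phi)\in\af_R$, i.e.\ $\P_{\ent_i}=\mathfrak W$ in $\af_R$.

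I do not expect any genuine obstacle: the two halves are, respectively, a direct citation of Theorem \ref{A<P} and the trivial observation that a restriction of a pointwise algebraic endomorphism is pointwise algebraic, combined with the maximality property of the Pinsker radical; the only point requiring a word is the reduction $\mathrm{lFin}_i(R)=\mod_R$, which removes the restriction on the domain of $\ent_i$. If one preferred to bypass the Example, one could argue directly from Theorem \ref{P=t}: if $x\in\mathfrak W(M,\phi)$ satisfies a nonzero polynomial relation of degree $d$ in $\phi$, then $\mathrm{rank}_R(T(\phi,Rx))\leq d<\infty$, and since $i$ is here a multiple of $\mathrm{rank}_R$ this gives $i(T(\phi,Rx))<\infty$, i.e.\ $x\in t^i_\phi(M)=\P_{\ent_i}(M,\phi)$ — but the route through the Example is shorter.
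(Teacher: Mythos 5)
Your proof is correct and follows exactly the route the paper intends: the corollary is stated as an immediate consequence of the preceding Example (whose equivalence $\ent_i(\phi)=0\Leftrightarrow M=\mathfrak W(M,\phi)$, valid on all of $\af_R$ since $\mathrm{lFin}_i(R)=\mod_R$), combined with Theorem \ref{A<P} for the inclusion $\P_{\ent_i}\leq\mathfrak W$ and the maximality property of the Pinsker radical (Lemma \ref{hP=0}) for the converse. Your care with the standing assumption $i\not\equiv 0$ (needed so that $\ent_i\not\equiv 0$ and Theorem \ref{PvsW} applies) matches the paper's conventions, so there is nothing to add.
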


\section{Contravariant entropy functions}\label{contr-sec}

A measure space is a triple $(X,\mathfrak  B, \mu)$, where $X$ is a non-empty set, $\mathfrak  B$ a $\sigma$-algebra on $X$ and $\mu$ is a measure on $\mathfrak  B$. If $(X_i,{\mathfrak B}_i,\mu_i)$, $i=1,2$, are measure spaces,
then a map $T:(X_1,{\mathfrak B}_1,\mu_1)\to (X_2,{\mathfrak B}_2,\mu_2)$ is a {\em measure preserving transformation} if  $T^{-1}({\mathfrak B}_2)\subseteq {\mathfrak B}_1$
and  $\mu_1(T^{-1}(A))=\mu_2(A)$  for all $A\in {\mathfrak B}_2$. Denote by $\mathbf{Mes}$ the category of all measure spaces and their measure preserving transformations. The measure entropy $h_{mes}$ in $\mathbf{Mes}$ is defined as follows. 

For a measure space $(X,{\mathfrak B},\mu)$ and a measurable partition $\xi=\{A_1,A_2,\ldots,A_k\}$ of $X$ define the \emph{(measure) entropy} of $\xi$ by 
$$
H_{mes}(\xi)=-\sum_{i=1}^k \mu(A_k)\log \mu(A_k).
$$
For two partitions $\xi, \eta$ of $X$ let $\xi \vee \eta=\{U\cap V: U\in \xi, V\in \eta\}$. Analogously define $\xi_1\vee \xi_2\vee \ldots \vee \xi_n$ for partitions $\xi_1,\ldots,\xi_n$ of $X$.
For a measure preserving $T:X\to X$ and a measurable partition $\xi=\{A_1,A_2,\ldots,A_k\}$ of $X$, let $T^{-j}(\xi)=\{T^{-j}(A_i)\}_{i=1}^k$. Then the limit 
$$
h_{mes}(T,\xi)=\lim_{n\to \infty}\frac{H_{mes}(\bigvee_{j=0}^{n-1}T^{-j}(\xi))}{n}
$$ 
exists. The \emph{(measure) entropy} of $T$ is $h_{mes}(T)=\sup _\xi h_{mes}(T,\xi)$, where $\xi$ runs over the family of all measurable partitions of $X$. 
 
\medskip
The topological entropy $h_{top}$ in the category $\mathbf{Comp}$ of all compact spaces and their continuous maps is defined as follows.  For a compact topological space $X$ and for an open cover $\mathcal U$ of $X$, let $N(\mathcal U)$ be the minimal cardinality of a subcover of $\mathcal U$. Since $X$ is compact, $N(\mathcal U)$ is always finite. Let $H(\mathcal U)=\log N(\mathcal U)$ be the \emph{entropy of $\mathcal U$}.
For any two open covers $\mathcal U$ and $\mathcal V$ of $X$, let $\mathcal U\vee\mathcal V=\{U\cap V: U\in\mathcal U, V\in\mathcal V\}$. Define analogously $\mathcal U_1\vee\ldots	\vee\mathcal U_n$, for open covers $\mathcal U_1,\ldots,\mathcal U_n$ of $X$.
For a continuous map $\psi:X\to X$ and an open cover $\mathcal U$ of $X$ let $\psi^{-1}(\mathcal U)=\{\psi^{-1}(U):U\in\mathcal U\}$.
The \emph{topological entropy of $\psi$ with respect to $\mathcal U$} is $$H_{top}(\psi,\mathcal U)=\lim_{n\to\infty}\frac{H(\mathcal U\vee\psi^{-1}(\mathcal U)\vee\ldots\vee\psi^{-n+1}(\mathcal U))}{n},$$ and the \emph{topological entropy} of $\phi$ is $$h_{top}(\psi)=\sup\{H_{top}(\psi,\mathcal U):\mathcal U\ \text{open cover of $X$}\}.$$

A definition of the topological entropy $h_d(f)$ for a continuous self-map $f$ of a metrizable topological space $(X,d)$ was given by Bowen \cite{Bow}. In case $(X,d)$ is compact, one has $h_d(f) =h_{top}(f)$ \cite{WardLN}. 

\medskip
Let $\textbf{CompGrp}$ denote the category of all compact groups and their continuous homomorphisms, and let $U : \textbf{CompGrp} \to \textbf{Comp}$ be the obvious forgetful functor. On the other hand, every compact group $G$ has a (unique) invariant measure $\mu_G$ (i.e.,  that makes all translations in $G$ measure preserving), namely the Haar measure.
It was noticed by Halmos \cite{Halmos}, that a continuous homomorphism in $\textbf{CompGrp}$ is measure preserving precisely when it is surjective. 
The epimorphisms in $\textbf{CompGrp}$ are precisely the surjective continuous homomorphisms \cite{P}. So, denoting by $\textbf{CompGrp}_e$ the non-full subcategory  
of  $\textbf{CompGrp}$, having as morphisms all epimorphisms in  $\textbf{CompGrp}$, we obtain also an obvious forgetful functor $V: \textbf{CompGrp} \to \textbf{Mes}$, given in the following diagram where $i$ is the inclusion of $\textbf{CompGrp}_e$ in $\textbf{CompGrp}$ as a non-full subcategory: 

\begin{equation*}
\xymatrix{
\textbf{CompGrp}_e\ar[r]^{\; i}\ar[d]^V & \textbf{CompGrp}\ar[r]^{\;\;\;\;\;\; U }& 
 \textbf{Top} \\
\textbf{Mes}
}
\end{equation*}

A remarkable property of this triple is that for $(G,\phi)\in\af_{\textbf{CompGrp}_e}$ (i.e., with $\phi$ surjective), the measure entropy $h_{mes}(V\phi)$ coincides with the topological entropy $h_{top}(U\phi)$. This was established by Aoki \cite{A} in the case of automorphisms and by Stojanov \cite{S} in the general case. 

On the other hand, it is possible to reduce general continuous homomorphisms of compact groups to surjective ones. Indeed, every continuous endomorphism $\phi$ of a compact group $K$ admits a largest closed $\phi$-invariant subgroup $E_\phi(K)$ such that $\phi\restriction_{E_\phi(K)}:E_\phi(K)\to E_\phi(K)$ is surjective and $h_{top}(\phi\restriction_{E_\phi(K)})=h_{top}(\phi)$ (see \cite[Corollary 8.6.1]{Wa}).

For the sake of simplicity, we shall write $h_{top}(\phi)$ in place of $h_{top}(U\phi)$ in the sequel. 

\bigskip
Another important connection between the topological entropy and the algebraic entropy (exploiting the Pontryagin duality) is the following so-called Bridge Theorem due to Peters: 

\begin{theorem}\label{Peters}\emph{\cite{Pet}}
If $G$ is a countable abelian group and $\phi$ is an automorphism of $G$, then $h(\phi)=h_{top}(\widehat \phi)$, where $\widehat G$ is the Pontryagin dual of $G$ and $\widehat\phi:\widehat G\to\widehat G$ is the adjoint automorphism of $\phi$.
\end{theorem}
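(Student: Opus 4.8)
The plan is to reduce both sides to a single finitely generated $\mathbb{Z}[\phi,\phi^{-1}]$-module and then match the combinatorial data defining $h_a$ with the open-cover data defining $h_{top}$ through Pontryagin duality. Since $G$ is countable, $\widehat G=\HG$ is a compact metrizable abelian group and $\widehat\phi:\chi\mapsto\chi\circ\phi$ is a topological automorphism of it. The first move is a double reduction. On the algebraic side, Fact \ref{properties}(c) gives $h_a(\phi)=\sup_H h_a(\phi\restriction_H)$, where $H$ runs over the $\phi$-invariant subgroups of $G$ that are finitely generated over $\mathbb{Z}[\phi,\phi^{-1}]$ (each finite $F\subseteq G$ generates such an $H$, namely $\langle\phi^n(x):n\in\mathbb{Z},\,x\in F\rangle$, and $G$ is the directed union of these). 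On the topological side, Pontryagin duality turns the inclusions $H\hookrightarrow G$ into quotients $\widehat G\twoheadrightarrow\widehat G/H^{\perp}\cong\widehat H$ with $H^{\perp}$ closed and $\widehat\phi$-invariant and $\bigcap_H H^{\perp}=0$, so $\widehat G=\varprojlim_H\widehat H$ and, by continuity of topological entropy under inverse limits of compact groups, $h_{top}(\widehat\phi)=\sup_H h_{top}(\widehat{\phi\restriction_H})$. Hence it suffices to prove the equality when $G$ is finitely generated over $\mathbb{Z}[\phi,\phi^{-1}]$.

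The heart of the matter is a dictionary between the finite trajectories in $G$ and the joins of open covers of $\widehat G$. This dictionary is exact when $G$ is torsion: then $\widehat G$ is a compact totally disconnected group whose open finite-index subgroups form a base at $0$, and for a finite subgroup $F\le G$ one has $F^{\perp}$ open of finite index, $\widehat\phi^{-j}(F^{\perp})=(\phi^j(F))^{\perp}$, and $\bigcap_{j=0}^{n-1}\widehat\phi^{-j}(F^{\perp})=T_n(\phi,F)^{\perp}$, so that $[\widehat G:\bigcap_{j<n}\widehat\phi^{-j}(F^{\perp})]=|T_n(\phi,F)|$. Applying $\tfrac1n\log$ to these indices, letting $n\to\infty$, and taking the supremum over $F$, the standard formula for topological entropy on totally disconnected compact groups yields $h_{top}(\widehat\phi)=\ent(\phi)$; and $\ent(\phi)=h_a(\phi)$ for torsion $G$ by \eqref{h-ent}. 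So the torsion case (in particular the bridge between $\ent$ and $h_{top}$ on totally disconnected compact groups) is settled cleanly.

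For the torsion-free part one first peels off $t(G)$ using the Addition Theorem \ref{AT} together with its topological counterpart, reducing to a divisible torsion-free group of finite rank, i.e. to an automorphism $\phi$ of $\mathbb{Q}^d$ realised by a matrix in $GL_d(\mathbb{Q})$; its dual is a $d$-dimensional solenoid carrying the automorphism $\widehat\phi$. Here the plan is to compute both sides and find that each equals the logarithmic Mahler measure $m(\phi)$ of the characteristic polynomial of $\phi$ (as in Theorem \ref{Yuz}): the value $h_{top}(\widehat\phi)=m(\phi)$ is the \emph{topological} Yuzvinski formula for solenoidal automorphisms (\cite{LW,WardLN,Y}), while $h_a(\phi)=m(\phi)$ is proved directly by estimating the growth of the cardinalities $|T_n(\phi,F)|$ for $F$ a box of lattice points inside a $\phi$-stable lattice of $\mathbb{Q}^d$ (a geometry-of-numbers / volume-growth argument). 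Only the topological Yuzvinski formula is used as input, so this is not circular with the algebraic Yuzvinski formula \ref{Yuz}, which elsewhere in the literature is \emph{deduced} from the present theorem. Finally one recombines the torsion and finite-rank torsion-free cases for a general finitely generated $\mathbb{Z}[\phi,\phi^{-1}]$-module via the two Addition Theorems (the finitely generated modules of infinite $\mathbb{Q}$-rank are handled separately, both entropies being $\infty$ there), and the initial limit reductions complete the proof.

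The main obstacle is the torsion-free/divisible case. There the passage from ``cardinality of a Minkowski-sum trajectory'' to ``covering number of a join of open covers'' is no longer an equality but a two-sided asymptotic comparison whose multiplicative errors must be shown to be sub-exponential in $n$, and the identity $h_a(\phi)=m(\phi)$ for $\phi\in GL_d(\mathbb{Q})$ is itself a substantial computation rather than a formal consequence of the setup. By contrast, the reductions and the bookkeeping needed to run the Addition Theorems and the limit arguments in the right order are routine, if lengthy.
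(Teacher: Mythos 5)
First, a point of reference: the paper does not prove this statement at all --- it is quoted from \cite{Pet}, where Peters argues directly, comparing the growth of trajectories of finite subsets with the Haar-measure entropy of $\widehat\phi$ and then using $h_{mes}=h_{top}$ for automorphisms of compact groups. Your route (reduce to subgroups finitely generated over $\Z[\phi,\phi^{-1}]$; settle the torsion case by matching $T_n(\phi,F)^{\perp}=\bigcap_{j<n}\widehat\phi^{-j}(F^{\perp})$ against covers by cosets of open subgroups; handle the finite-rank torsion-free case by computing both sides as a Mahler measure; recombine with the two Addition Theorems) is genuinely different, and its skeleton is sound: the limit reductions, the torsion case (essentially Weiss' bridge theorem), and the infinite-rank case are all fine.

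The genuine gap is in what you yourself call the heart of the matter. For $\phi\in GL_d(\Q)$ the identity $h_a(\phi)=m(\phi)$ is exactly the Algebraic Yuzvinski Formula (Theorem \ref{Yuz}), which in this paper and in \cite{DG1} is \emph{deduced from} Peters' theorem plus the topological Yuzvinski formula; an independent proof exists only in \cite{GV} (and \cite{Simone2} for $\Z^n$) and is a substantial piece of work, so your proof would have to import it wholesale --- an input at least as deep as the statement being proved. Worse, the one-line argument you sketch for it does not work as stated: a ``$\phi$-stable lattice of $\Q^d$'' exists only when $\phi$ is conjugate to an integral matrix (already multiplication by $3/2$ on $\Q$ stabilizes no nontrivial finitely generated subgroup), and it is precisely this failure that produces the $\log|a_0|$ term in $m(\phi)$; a volume-growth estimate over a lattice can only see the eigenvalues of modulus $>1$ and misses the $p$-adic/denominator contribution, so the proposed geometry-of-numbers computation would give the wrong value for non-integral $\phi$. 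Until that case is actually proved (or honestly cited as \cite{GV}, together with the divisible-hull and quotient-by-totally-disconnected-kernel comparisons you pass over when replacing $G/t(G)$ by $\Q^d$ and $\widehat{G/t(G)}$ by the solenoid), the argument is incomplete at its decisive step.
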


In \cite{DG} we generalize this theorem, proving it for endomorphisms of arbitrary abelian groups.

\medskip
It is known that $h_{top}$ is ``continuous" with respect to inverse limits when considered on $\mathbf{CompGrp}$. This shows a 
substantial difference
compared to  the entropy functions we considered in the previous sections as the algebraic entropy and the $i$-entropy for $\abg$ and $\mod_R$ 
respectively, as they are ``continuous" with respect to direct limits.

\medskip
This gives a good motivation to split the abstract notion of entropy functions in  {\em two dual notions}, say \emph{covariant entropy functions} (precisely those of Definition \ref{h-def}) and \emph{contravariant entropy functions} as $h_{top}$ on $\mathbf{CompGrp}$. 

While both the covariant and contravariant entropy functions must be invariant under conjugation, and satisfy the Addition Theorem (or some weaker version of the Addition Theorem), the ``continuity" property must be imposed in a selective way: the covariant entropy functions must be ``continuous" with respect to direct limits, while the contravariant entropy functions could be ``continuous" with respect to inverse limits (here we respect the already existing record on the topological and the measure entropy).

\begin{remark}
The distinction between both types of entropy is well visible also in the case of the ``normalization axiom" that imposes a specific value of the 
entropy function at the Bernoulli shifts. For $K\in\abg$, the left Bernoulli shift ${}_K\beta$ of $G= K^{(\N)}$ has algebraic entropy $0$ (as $(G, {}_K \beta)= \mathfrak O(G, {}_K \beta)$), and the right Bernoulli shift $\beta_K$ of $K^{(\N)}$ has algebraic entropy $\log |K|$. 
Conversely, for $K\in\mathbf{CompGrp}$, the left Bernoulli shift ${}_K\beta$ of $K^\N$ has topological entropy $\log |K|$, while the right Bernoulli shift $\beta_K$ of $K^\N$ has topological entropy $0$. 
\end{remark}


So, in view of the properties of the measure entropy and of the topological entropy discussed above, we introduce the contravariant entropy functions in the following way. 

\begin{definition}\label{hc-def}
Let $\mathfrak N$ be a complete abelian category. A \emph{contravariant entropy function} of $\mathfrak N$ is a an entropy function $h:\mathfrak N^{op}\to \R_+\cup\{\infty\}$, where $\mathfrak N^{op}$ is the opposite category of $\mathfrak N$.
\end{definition}

Note that a contravariant entropy function $h$ satisfies (A1) and (A2) of Definition \ref{h-def}, as the conditions are self-dual; so the difference between covariant and contravariant entropy functions is contained in (A3) and its ``opposite''. 

\medskip
Semiabelian categories, introduced  in \cite{JMT}, provide a nice generalization of abelian categories which reflects the properties of the categories of groups, 
rings and algebras and allows for a categorical approach  to radical theories. 
As shown recently in \cite{BC}, the category $\textbf{CompGrp}$ is semiabelian but not abelian. This suggests to generalize the setting at least to semiabelian categories to include the topological entropy at least for continuous endomorphisms of compact groups. So we leave open the following problem, noting that the theory of torsion theories has been extended 
in this 
context 
(see for example in \cite{CDT,JT}).

\begin{problem}
Develop the theory of covariant (respectively, contravariant) entropy functions in semiabelian cocomplete (respectively, complete) categories.
\end{problem}


\end{document}